\numberwithin{equation}{section}
\theoremstyle{definition}
\newtheorem{theorem}{Theorem}[section]
\newtheorem{corollary}[theorem]{Corollary}
\newtheorem{proposition}[theorem]{Proposition}
\newtheorem{definition}[theorem]{Definition}
\newtheorem{example}[theorem]{Example}
\newtheorem{notation}[theorem]{Notation}
\newtheorem{remark}[theorem]{Remark}
\newtheorem{lemma}[theorem]{Lemma}
\newcommand{\numberset}{\mathbb}
\newcommand{\N}{\numberset{N}}
\newcommand{\Z}{\numberset{Z}}
\newcommand{\R}{\numberset{R}}
\newcommand{\F}{\numberset{F}}
\newcommand{\mC}{\mathcal{C}}
\newcommand{\mS}{\mathcal{S}}
\newcommand{\mG}{\mathcal{G}}
\newcommand{\mF}{\mathcal{F}}
\newcommand{\rk}{\textnormal{rk}}
\newcommand{\mat}{\operatorname{Mat}_q^{n \times m}}
\newcommand{\matalt}{\operatorname{Alt}_q^{n \times n}}
\newcommand{\matsym}{\operatorname{Sym}_q^{n \times n}}
\newcommand{\supp}{\textnormal{supp}}
\newcommand{\inv}{\textnormal{inv}}
\newcommand{\diag}{\Delta}
\newcommand{\NAR}{\textup{NAR}}
\newcommand{\sym}{\textup{sym}}
\newcommand{\alt}{\textup{alt}}
\newtheorem{claim}{\textit{Claim}}
\newcommand*{\myproofname}{Proof of the claim}
\newenvironment{clproof}[1][\myproofname]{\begin{proof}[#1]}{\end{proof}}
\def\atk{\textup{ATK}}
\def\lex{\textup{lex}}
\title{\textbf{The Diagonals of a Ferrers Diagram}}
\author[1]{Giuseppe Cotardo\thanks{G. C. is partially supported by the NSF grants DMS-2037833 and DMS-2201075, and by the Commonwealth Cyber Initiative.}}
\affil[1]{Virginia Tech, U.S.A.}
\author[2]{Anina Gruica\thanks{A. G. is supported by the Dutch Research Council through grant OCENW.KLEIN.539.}}
\affil[2]{Eindhoven University of Technology, the Netherlands}
\author[2]{Alberto Ravagnani\thanks{A. R. is supported by the Dutch Research Council through grants VI.Vidi.203.045, 
OCENW.KLEIN.539, 
and by the Royal Academy of Arts and Sciences of the Netherlands.}}
\date{}
\begin{document}

\maketitle
\thispagestyle{empty}
	
\begin{abstract}
We propose and develop a theory of Ferrers diagrams and their $q$-rook polynomials solely based on their diagonals. We show that the cardinalities of the diagonals of a Ferrers diagram are equivalent information 
to their rook numbers, $q$-rook polynomials, and the rank distribution of matrices supported on the diagram.
Our approach is based on the concept of \textit{canonical form} of a Ferrers diagrams, and  on two simple diagram operations as the main proof tools.
In the second part of the paper we develop the same theory for symmetric Ferrers diagrams, considering symmetric and alternating matrices supported on them. As an application of our results, we establish some combinatorial identities linking symmetric and alternating matrices, which do not appear to have an obvious bijective proof, and which generalize some curious results in enumerative combinatorics. 
\end{abstract}

\medskip

\section*{Introduction}

Rook theory studies 
the ways in which a certain number of non-attacking rooks can be placed on a given subset of the chess board; see e.g.~\cite{riordan1958introduction} for a general reference. The number of such placements depends on the number of rooks one wishes to place and on the shape of the board's subset. Ferrers diagrams are special subsets of the board
that have a ``staircase'' shape. Their rook theory has been extensively studied, also in connection with the number of matrices over a finite field~$\F_q$ whose support is contained in the diagram; see~\cite{haglund2001rook, lewis2010matrices, klein2014counting,gruica2022rook,gluesing2020partitions}. The study of this connection is often called
``$q$-rook theory'' and it is the combinatorial $q$-analogue of (classical) rook theory. Ferrers diagrams appear to behave particularly well from a $q$-rook theory perspective. For instance, the number of matrices over $\F_q$ of any given rank supported on a Ferrers diagram is a polynomial in $q$, a property that is false for general diagrams; see for instance~\cite[Section 8]{stembridge1998counting}. Skew Ferrers diagrams behave somewhat similarly to Ferrers diagrams, although their theory is substantially more complex~\cite{lewis2020rook}.

This paper shows that the $q$-rook theory of Ferrers diagrams
is linked to the combinatorics of their diagonals in a very natural way. 
In fact, it follows from our results that the $q$-rook polynomials of a Ferrers diagrams are equivalent information to the cardinalities of their diagonals. As a corollary, we obtain that to check whether or not two Ferrers diagrams have the same ($q$-)rook numbers it suffices to check if their diagonals have the same cardinalities. 
Clearly, the cardinalities of the diagonals of a Ferrers diagram do not fully specify the diagram itself: our paper shows that they 
nonetheless retain the relevant combinatorial information.

Deriving the $q$-rook theory of Ferrers diagrams via their diagonals comes with some other interesting advantages, which we briefly survey in the remainder of this introduction, offering an overview of the contributions made by this paper.

Focusing on the diagonals of Ferrers diagrams allows us to 
introduce a notion of equivalence between diagrams. We call Ferrers diagrams \textit{diagonally equivalent} if their diagonals have the same cardinalities.
We then identify two operations
that take a diagram as input and return a smaller diagram. We call these operations \textit{puncturing} and \textit{reduction}, and study them in Section~\ref{sec:2}.
Puncturing and reduction preserve diagonal equivalence of Ferrers diagrams. In particular, they can be efficiently used in recursive arguments that establish the properties of equivalent diagrams. That's precisely the approach we take 
in Section~\ref{sec:3}. Our main result in this context is Theorem~\ref{thm:bijection}
 and its consequences, stated in Corollary~\ref{cor:surprise}.

 In the second part of the paper we study the theory of Ferrers diagrams and their diagonals in connection with alternating and symmetric matrices supported on them. This naturally leads to the notion of \textit{symmetric} Ferrers diagram; see Section~\ref{sec:4}.
 We introduce notions of $q$-rook polynomials for symmetric Ferrers diagrams that capture the combinatorics of alternating and symmetric matrices over $\F_q$. This revises and partially extends the approach of~\cite{lewis2010matrices}.

 In the last section of the paper, we apply the theory of $q$-rook polynomials to derive curious identities involving symmetric and alternating matrices at the same time. It is well known that these two classes of matrices exhibit similarities that are hard to explain via bijective proofs; see e.g.~\cite[Section 3]{lewis2020rook}. $q$-Rook polynomials turn out to be powerful tools to establish these identities. Our formulas generalize a result of~\cite{lewis2010matrices} from unrestricted matrices to matrices supported on a Ferrers diagram.

\section{Ferrers Diagrams}
We start by recalling the main definitions and classical results that will be needed throughout this paper. In the sequel, $q$ denotes a prime power and $\F_q$ is the finite field with $q$ elements. Depending on the context, $q$ will sometimes also denote the indeterminate of a univariate polynomial.

We let $m$ and $n$ be positive integers and denote by $\mat$ the $\F_q$-linear space of $n \times m$ matrices over $\F_q$. We denote by $\N$ the set of positive integers and by $\N_0$ the set of non-negative integers.
We call $\N \times \N$
the \textbf{infinite board}, which we depict as follows:
\begin{figure}[h]
    \centering
    \begin{tikzpicture}[scale=0.6, line width=1pt]
    \draw (0,0) grid (5,5);
    \node at (-1,4.5) {1};
    \node at (-1,3.5) {2};
    \node at (-1,2.5) {3};
    \node at (-1,1.5) {$\cdot$};
    \node at (-1,0.5) {$\cdot$};
    \node at (4.5,6) {$\cdot$};
    \node at (3.5,6) {$\cdot$};
    \node at (2.5,6) {3};
    \node at (1.5,6) {2};
    \node at (0.5,6) {1};
    \end{tikzpicture}
\end{figure}

This paper is primarily concerned with \textit{Ferrers diagrams}, defined in the following way.

\begin{definition}
A \textbf{Ferrers diagram} is a finite subset $\mF \subseteq \N \times \N$ with the property that if $(i,j)\in\mF$, then $(s,t)\in\mF$ for all $s\in \{1,\ldots,i\}$ and $t\in \{1,\ldots,j\}$. We identify a Ferrers diagram with its \textbf{column lengths} $[c_1, c_2 \dots,c_m]$, where $m=\max\{j : i \in \N, (i,j) \in \mF\}$ and $c_j=|\{(i,j) : i\in\N, \, (i,j) \in \mF \}|$
for all $j \in \{1, \dots, m\}$.
\end{definition}

One can visually represent a Ferrers diagram as in Figure~\ref{fig:F5332} with the typical staircase shape.
Note that our diagrams are top-left aligned.

\begin{figure}[h]
\centering
  \begin{tikzpicture}[scale=0.6, line width=1pt]
        \draw (0,0) grid (5,-5);
         \foreach \n in {0,...,3} {\draw[draw=black, fill =Goldenrod!55] (0,-\n) rectangle (1,-1-\n);}
         \foreach \n in {0,...,2} {\draw[draw=black, fill =Goldenrod!55] (1,-\n) rectangle (2,-1-\n);}
         \foreach \n in {0,...,2} {\draw[draw=black, fill =Goldenrod!55] (2,-\n) rectangle (3,-1-\n);}
         \foreach \n in {0,...,1} {\draw[draw=black, fill =Goldenrod!55] (3,-\n) rectangle (4,-1-\n);}
          \foreach \n in {0,...,0} {\draw[draw=black, fill =Goldenrod!55] (4,-\n) rectangle (5,-1-\n);}
    \end{tikzpicture}
    \caption{The Ferrers diagram $\mF=[4,3,3,2,1]$. \label{fig:F5332}} 
\end{figure}
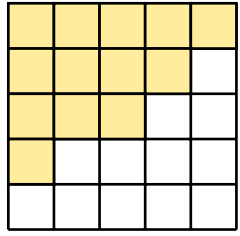

Following Haglund's work~\cite{haglund1998q}, in this paper we study Ferrers diagrams 
 in connection with the matrices over $\F_q$ that are \textit{supported} on it.

\begin{definition}
The \textbf{support} of  $M \in \mat$ is $\operatorname{supp}(M) = \{(i,j) : M_{ij} \ne 0\} \subseteq \{1,\ldots,n\} \times \{1,\ldots,m\}$. For a Ferrers diagram $\mF \subseteq \{1,\ldots,n\} \times \{1,\ldots,m\}$, we denote by $\mat[\mF]$ the space of matrices $M \in \mat$ 
with $\supp(M) \subseteq \mF$.
\end{definition}

Clearly, $\mat[\mF]$ has dimension $|\mF|$ over $\F_q$. We also recall the following definitions from rook theory.

\begin{definition}
A \textbf{placement} of \textbf{non-attacking rooks} is a subset $P \subseteq \N \times \N$ with the property that no two elements of $P$ have the same row or column index. The elements of $P$ are called \textbf{rooks}. For a Ferrers diagram $\mF \subseteq\N \times \N$ and for $r \ge 0$, we denote by $\textup{NAR}(\mF,r)$ the set of non-attacking rook placements $P$ with $P \subseteq \mF$ and $|P| = r$.
\end{definition}

\begin{definition}
The \textbf{degree} of a Ferrers diagram $\mF$ 
is $\partial(\mF)=\max\{r : \NAR(\mF,r) \neq \emptyset\}$.
\end{definition}

Note that $\partial(\mF)$ is the degree of the classical rook polynomial associated with $\mF$, which explains our choice for the name.
It turns out that $\partial(\mF)$ coincides with the maximum rank a matrix supported on $\mF$ can have, independently on the field size $q$. This fact is well known, but we include a short proof for completeness.

\begin{proposition}\label{prop:partialFmaxrk}
     We have $\partial(\mF)=\max\{\rk(M) : M \in \mat[\mF]\}$ for any Ferrers diagram $\mF \subseteq \{1, \ldots, n\} \times \{1, \ldots, m\}$.
\end{proposition}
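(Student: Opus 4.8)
The plan is to prove the two inequalities $\partial(\mF) \le \max\{\rk(M):M\in\mat[\mF]\}$ and $\partial(\mF)\ge \max\{\rk(M):M\in\mat[\mF]\}$ separately. Neither direction actually uses the staircase structure of $\mF$: the argument works for any subset of the board, and indeed this is what makes $\partial(\mF)$ (and hence the maximal rank) independent of $q$.

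For the inequality $\partial(\mF)\le\max\{\rk(M):M\in\mat[\mF]\}$, I would start from a placement $P\in\NAR(\mF,r)$ with $r=\partial(\mF)$ and define $M\in\mat$ by $M_{ij}=1$ if $(i,j)\in P$ and $M_{ij}=0$ otherwise. Then $\supp(M)\subseteq P\subseteq\mF$, so $M\in\mat[\mF]$. Letting $I$ and $J$ be the sets of row and column indices occurring in $P$, the fact that no two rooks share a row or column gives $|I|=|J|=r$, and the submatrix of $M$ on rows $I$ and columns $J$ is a permutation matrix, hence invertible. Therefore $\rk(M)\ge r$, which is the claimed inequality.

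For the reverse inequality, I would take $M\in\mat[\mF]$ with $\rk(M)=r$ maximal and invoke the standard fact that the rank of a matrix equals the largest order of a square submatrix with nonzero determinant. So there exist $I\subseteq\{1,\ldots,n\}$ and $J\subseteq\{1,\ldots,m\}$ with $|I|=|J|=r$ and the corresponding $r\times r$ minor nonzero. Expanding this minor via the Leibniz formula, some term must be nonzero, i.e. there is a bijection $\sigma\colon I\to J$ with $\prod_{i\in I}M_{i,\sigma(i)}\ne 0$. The set $P=\{(i,\sigma(i)):i\in I\}$ has $r$ cells, no two in the same row or column, so it is a non-attacking rook placement; and since each $M_{i,\sigma(i)}\ne 0$, every cell of $P$ lies in $\supp(M)\subseteq\mF$. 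Hence $P\in\NAR(\mF,r)$ and $\partial(\mF)\ge r$, completing the proof.

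I do not expect a genuine obstacle. The only two points needing a word of justification are the elementary linear-algebra facts used — that a permutation submatrix is invertible, and that the rank equals the maximal order of a nonvanishing minor — both valid over an arbitrary field, which also explains why $\partial(\mF)$ carries no dependence on $q$.
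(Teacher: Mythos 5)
Your proof is correct, and the first inequality ($\partial(\mF)\le\max\{\rk(M)\}$) is essentially the paper's argument: place $1$'s on a maximal rook placement and observe the resulting matrix has full rank on those rows and columns. For the reverse inequality, however, you take a genuinely different route. The paper invokes the K\"onig--Egev\'ary theorem to identify $\partial(\mF)$ with the minimum number of lines covering $\mF$, and then bounds the rank of any supported matrix by that line-cover number. You instead start from a matrix $M$ of maximal rank $r$, pick an $r\times r$ nonvanishing minor, and expand it by the Leibniz formula to extract a permutation $\sigma$ with all entries $M_{i,\sigma(i)}$ nonzero; the corresponding cells form a non-attacking rook placement of size $r$ inside $\supp(M)\subseteq\mF$, so $\partial(\mF)\ge r$. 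Both arguments are valid over any field. Yours is more elementary and self-contained (it is the standard ``term rank at least rank'' argument and needs no combinatorial duality theorem), while the paper's is shorter modulo the cited K\"onig--Egev\'ary result and additionally yields the dual characterization of $\partial(\mF)$ as a minimum line cover, which is a useful fact in its own right. No gap in your argument.
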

\begin{proof}
Suppose that $P \in \NAR(\mF,r)$. Construct a matrix $M$ by setting
$M_{ij}=1$ if $(i,j) \in P$ and $M_{ij}=0$ otherwise. Then $M$ is supported on $\mF$ and has rank $r$. Vice versa,
by the K\"onig-Egev\'ary Theorem (see~\cite[Chapter~7]{riordan1958introduction}) $\partial(\mF)$ is the minimum number of lines (i.e., rows or columns) that are needed to cover all the elements of $\mF$. Therefore the rank of any matrix~$M$ supported on $\mF$ cannot exceed $\partial(\mF)$.
\end{proof}

In~\cite{haglund1998q}, Haglund establishes a connection between the number of matrices supported on a Ferrers diagram~$\mF$ and the \textit{$q$-rook polynomials} of $\mF$.
These have been defined by Garsia and Remmel
in~\cite{GaRe86} and are based on the following 
numerical invariants.

\begin{notation}
\label{not:inv}
Let $\mF\subseteq \N \times \N$ be a Ferrers diagram
and let $P \subseteq \mF$ be a placement of non-attacking rooks. We denote by $\inv(\mF,P)$ the number computed as follows: cross out all the cells from~$\mF$ that either correspond to a rook of $P$, or are above or to the left of any rook of $P$. Then $\inv(\mF,P)$ is the number of cells
of $\mF$ not crossed out. In the rook theory context,
$\inv(\mF,P)$ is called a \textit{statistics}.
\end{notation}

For example,
consider the Ferrers diagram $\mF=[4,3,3,2,1]$ and the placement of non-attacking rooks $P=\{(2,4), (3,2),(4,1)\}$. 
These are depicted in Figure~\ref{fig:F5332NAR},
where the rooks are marked with ``$\rook$'' and the cancellations that are performed when computing $\inv(\mF,P)$, excluding the rooks, are marked with ``$\color{Red}\times$''. We have $\inv(\mF,P)=3$.

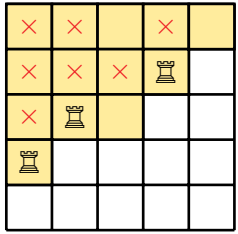
\begin{figure}[h]
\centering
    \begin{tikzpicture}[scale=0.6, line width=1pt]
        \draw (0,0) grid (5,-5);
         \foreach \n in {0,...,3} {\draw[draw=black, fill =Goldenrod!55] (0,-\n) rectangle (1,-1-\n);}
         \foreach \n in {0,...,2} {\draw[draw=black, fill =Goldenrod!55] (1,-\n) rectangle (2,-1-\n);}
         \foreach \n in {0,...,2} {\draw[draw=black, fill =Goldenrod!55] (2,-\n) rectangle (3,-1-\n);}
         \foreach \n in {0,...,1} {\draw[draw=black, fill =Goldenrod!55] (3,-\n) rectangle (4,-1-\n);}
          \foreach \n in {0,...,0} {\draw[draw=black, fill =Goldenrod!55] (4,-\n) rectangle (5,-1-\n);}
         \node at (3.5,-1.5) {\color{black}\rook};
         \node at (3.5,-0.5) {$\color{Red}\times$};
         \node at (2.5,-1.5) {$\color{Red}\times$};
         \node at (1.5,-2.5) {\color{black}\rook};
         \node at (1.5,-1.5) {$\color{Red}\times$};
         \node at (1.5,-0.5) {$\color{Red}\times$};
         \node at (0.5,-3.5) {\rook};
         \node at (0.5,-2.5) {$\color{Red}\times$};
         \node at (0.5,-1.5) {$\color{Red}\times$};
         \node at (0.5,-0.5) {$\color{Red}\times$};
    \end{tikzpicture}
    \caption{A placement $P$ of $3$ non-attacking rooks on the Ferrers diagram~$\mF=[4,3,3,2,1]$ and the cancellations performed to compute $\inv(\mF,P)$.\label{fig:F5332NAR}}
    \end{figure}

The $q$-rook polynomials of a Ferrers diagram are defined as follows.

\begin{definition} \label{def:qrook}
Let $\mF$ be a Ferrers diagram and let $r\in\Z$. The $r$\textbf{-th} $q$\textbf{-rook polynomial} of $\mF$ is
\begin{equation*}
    R_r(\mF;q)=\sum_{P\in\textup{NAR}(\mF,r)}q^{\inv(\mF,P)}\in\Z[q].
\end{equation*}
We set $R_r(\mF;q)=0$ if $r<0$ or $\textup{NAR}(\mF,r)=\emptyset$, and $R_0(\mF;q)=q^{|\mF|}$.
\end{definition}

The polynomials of Definition~\ref{def:qrook} 
are the $q$-analogues of the classical rook polynomials; see e.g.~\cite{riordan1958introduction,kaplansky1946problem}. A remarkable fact (see~\cite[page~257]{GaRe86}) is that Ferrers diagrams have the same classical rook polynomial if and only if they have the same $q$-rook polynomials.

In this paper, we will often need to count the number of matrices of given rank in an $\F_q$-linear space. We introduce the following notation to make the statements more compact.

\begin{notation}
For an $\F_q$-linear space of matrices $\mC \le \mat$ and an integer $r \in \N_0$, we let
$W_r(\mC)=|\{M \in \mC : \rk(M)=r\}|$.
\end{notation}

We can finally state Haglund's theorem connecting rook theory and the distribution of matrices supported on a Ferrers diagram.

\begin{theorem}[{\cite[Theorem~1]{haglund2001rook}}] \label{thm:hag}
Let $\mF$ be a Ferrers diagram and let $r\in\N_0$. We have
\begin{align*}
    W_r(\mat[\mF]) = (q-1)^r q^{|\mF|-r}R_r(\mF;q^{-1}).
\end{align*}
\end{theorem}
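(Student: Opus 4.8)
The plan is to prove the identity by a double-counting argument that fixes, for each matrix $M$ supported on $\mF$, a canonical non-attacking rook placement recording the ``pivot'' positions of $M$, and then to show that the generating function over all such $M$ reorganizes into the claimed product. Concretely, I would first recall the row-reduction / Gaussian elimination normal form: given $M \in \mat[\mF]$ of rank $r$, there is a unique set of $r$ ``leading'' entries obtained by scanning the matrix (say, column by column from the left, and within each column from the top) and marking an entry as a pivot if it is nonzero and its row and column have not yet been used. Because $\mF$ is a Ferrers diagram (down-left closed), the pivot set $P$ is automatically a subset of $\mF$, and it is by construction a placement of $r$ non-attacking rooks, so $P \in \textup{NAR}(\mF,r)$. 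This gives a partition of $\{M \in \mat[\mF] : \rk(M)=r\}$ according to which $P \in \textup{NAR}(\mF,r)$ arises.

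The second step is the local count: for a fixed $P \in \textup{NAR}(\mF,r)$, how many $M \in \mat[\mF]$ produce exactly $P$ as their pivot set? Here I would analyze the degrees of freedom cell by cell. A cell of $\mF$ that holds a rook of $P$ must carry a nonzero entry, contributing a factor $(q-1)$ — there are $r$ such cells. A cell of $\mF$ that lies strictly above or strictly to the left of some rook of $P$ (the cells crossed out in Notation~\ref{not:inv}, other than the rooks themselves) must carry a value that is forced: after the elimination procedure it is determined by the requirement that the marked entries really are the pivots in the prescribed order — equivalently, these entries must be such that the partial elimination ``clears'' correctly, which pins each of them to a unique value (one can make this precise by induction on the columns, using that being to the left of a rook in a later column, or above a rook in the same column, means the entry gets eliminated). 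The remaining cells of $\mF$ — exactly the $\inv(\mF,P)$ cells that are not crossed out — are free to take any value in $\F_q$, contributing $q^{\inv(\mF,P)}$. Altogether the fiber over $P$ has size $(q-1)^r q^{\inv(\mF,P)}$.

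Combining the two steps,
\begin{align*}
W_r(\mat[\mF])
 = \sum_{P \in \textup{NAR}(\mF,r)} (q-1)^r\, q^{\inv(\mF,P)}.
\end{align*}
It remains only to match this to the right-hand side of the statement. Write $\inv(\mF,P) = |\mF| - r - c(\mF,P)$, where $c(\mF,P)$ is the number of crossed-out cells other than the $r$ rooks (since the crossed-out cells, including rooks, are disjoint from the $\inv(\mF,P)$ surviving cells and together they exhaust $\mF$). Then $q^{\inv(\mF,P)} = q^{|\mF|-r} \cdot (q^{-1})^{\,c(\mF,P)}$, and substituting $q \mapsto q^{-1}$ in Definition~\ref{def:qrook} shows $\sum_{P} (q^{-1})^{\inv(\mF,P; \text{w.r.t. } q^{-1})}$... more cleanly: by definition $R_r(\mF;q^{-1}) = \sum_{P \in \textup{NAR}(\mF,r)} q^{-\inv(\mF,P)}$, so
\begin{align*}
(q-1)^r q^{|\mF|-r} R_r(\mF;q^{-1})
 = (q-1)^r q^{|\mF|-r} \sum_{P \in \textup{NAR}(\mF,r)} q^{-\inv(\mF,P)},
\end{align*}
and one checks that $q^{|\mF|-r} \cdot q^{-\inv(\mF,P)} = q^{\,c(\mF,P)}$... which is not obviously $q^{\inv(\mF,P)}$. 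This mismatch signals that the correct fiber weight is $q^{\,c(\mF,P)}$ rather than $q^{\inv(\mF,P)}$: that is, the free cells in the elimination argument are precisely the crossed-out non-rook cells, while the cells that are forced are the $\inv$-cells. I would therefore redo Step 2 with the roles swapped — the entries strictly below-right patterns that survive cancellation are the constrained ones — giving fiber size $(q-1)^r q^{|\mF| - r - \inv(\mF,P)}$ and hence $W_r(\mat[\mF]) = (q-1)^r q^{|\mF|-r} \sum_P q^{-\inv(\mF,P)} = (q-1)^r q^{|\mF|-r} R_r(\mF;q^{-1})$, as desired.

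The main obstacle is exactly the bookkeeping in Step 2: correctly identifying which cells of $\mF$ are free, which are forced, and which must be nonzero under the chosen elimination order, and proving rigorously (by induction on columns, using the Ferrers property at each stage) that the forced entries are uniquely determined. The count of free cells must come out to match the $\inv$ statistic after the $q \leftrightarrow q^{-1}$ reciprocity, and getting the direction of the cancellation convention to align with Garsia–Remmel's definition is the delicate point; once the fiber cardinality is pinned down as $(q-1)^r q^{|\mF|-r-\inv(\mF,P)}$, the rest is immediate from Definition~\ref{def:qrook}.
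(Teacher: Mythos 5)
The paper itself does not prove this statement (it is quoted from Haglund), so the only internal point of comparison is the proof of Theorem~\ref{thm:symcount}, which runs exactly the kind of elimination-and-fiber-counting argument you propose, for symmetric matrices. Your overall strategy --- partition the rank-$r$ matrices in $\mat[\mF]$ by a canonical non-attacking rook placement of pivots, count each fiber, and recognize the resulting generating function --- is the right one, and your final fiber size $(q-1)^r q^{|\mF|-r-\inv(\mF,P)}$ is correct.

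However, Step~2, which is the entire content of the theorem, is not actually carried out, and you essentially concede this: you first conclude that the fiber over $P$ has size $(q-1)^r q^{\inv(\mF,P)}$, observe that this is inconsistent with the target identity, and then declare that the roles of ``free'' and ``forced'' cells must be swapped --- arriving at the correct exponent by working backwards from the answer rather than from the elimination. (The initial assignment is genuinely wrong, not merely a convention issue: for $\mF=[2,1]$ and $r=1$ it gives $(q-1)(q^2+2q)$, whereas the true count is $2q^2-q-1$.) The repair is not a relabeling but a correct choice of elimination order. The pivot rule you describe (scan columns left to right, rows top to bottom, and \emph{mark} leading entries) is not rank-revealing without performing the eliminations, and even with them it clears cells below and to the right of each pivot, which is the wrong statistic: the cells crossed out in Notation~\ref{not:inv} are those \emph{above} and \emph{to the left} of a rook. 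The elimination must therefore select as pivot the lowest nonzero entry of the rightmost column and clear upwards and leftwards, exactly as in the proof of Theorem~\ref{thm:symcount}; it is these operations that use the Ferrers property (the diagram being top-left aligned, adding a multiple of row $i$ to a row $s<i$, or of column $j$ to a column $s<j$, keeps the support inside $\mF$). With that convention the cleared cells are precisely $\atk(P)\setminus P$, each contributing a free parameter $q$, the $r$ pivots contribute $(q-1)^r$, and the fiber has size $(q-1)^r q^{|\atk(P)|-r}=(q-1)^r q^{|\mF|-r-\inv(\mF,P)}$, from which the identity follows by summing over $P\in\NAR(\mF,r)$. As written, your argument asserts this fiber count rather than proving it.
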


\section{
Operations on Ferrers Diagrams} \label{sec:diag}\label{sec:2}

The classical theory of Ferrers diagrams is mostly centered around the properties of their rows and columns. In this paper, we 
show that the theory of Ferrers diagrams
can be reconstructed entirely 
by looking only at their \textit{diagonals}.
This gives rise to an interesting notion of equivalence of Ferrers diagrams:
we declare two diagrams equivalent if they have the same diagonal cardinalities. We then show that equivalent diagrams have the same $q$-rook polynomials, which has surprising consequences for the behaviour of matrices over~$\F_q$ supported on a Ferrers diagram.

The main results are stated and proved in Section~\ref{sec:equiv}. This section contains a series of technical results that we will need in the proofs. Our approach is based on two operations on diagrams (\textit{puncturing} and \textit{reduction}), which 
preserve equivalence.

\begin{definition}
Let $i\in\N$. Then the $i$\textbf{-th diagonal} of the board $\N\times \N$ is the set $\diag_i=\{(r,i-r+1):r\in\N\}$. We say that Ferrers diagrams $\mF,\mF' \subseteq \N\times \N$ 
are \textbf{diagonally equivalent} if $|\diag_i\cap\mF|=|\diag_i\cap\mF'|$ for all $i\in\N$, i.e., if the cardinalities of their diagonals are the same.
\end{definition}

The following is a visual representation of
the five diagonals of the board $\{1,\ldots,5\}\times\{1,\ldots,5\}$, each of which has a different color: 

\begin{center}
    \begin{tikzpicture}[scale=0.6, line width=1pt]
    \draw (0,1) grid (5,-4);
    \draw[draw=black, fill =Goldenrod!55] (0,1) rectangle (1,0);
    \foreach \n in {0,...,1} {\draw[draw=black, fill =YellowOrange!55]  (2-\n,1-\n) rectangle (1-\n,0-\n);}
    \foreach \n in {0,...,2} {\draw[draw=black, fill =Red!55]  (3-\n,1-\n) rectangle (2-\n,0-\n);}
    \foreach \n in {0,...,3} {\draw[draw=black, fill =CarnationPink!55]  (4-\n,1-\n) rectangle (3-\n,0-\n);}
     \foreach \n in {0,...,4} {\draw[draw=black, fill =Periwinkle!55]  (5-\n,1-\n) rectangle (4-\n,0-\n);}
     \foreach \n in {1,...,4} {\draw[draw=black, fill =RoyalBlue!55]  (6-\n,1-\n) rectangle (5-\n,0-\n);}
     \foreach \n in {1,...,3} {\draw[draw=black, fill =TealBlue!55]  (6-\n,0-\n) rectangle (5-\n,-1-\n);}
      \foreach \n in {1,...,2} {\draw[draw=black, fill =LimeGreen!55]  (6-\n,-1-\n) rectangle (5-\n,-2-\n);}
     \foreach \n in {1,...,1} {\draw[draw=black, fill =ForestGreen!55]  (6-\n,-2-\n) rectangle (5-\n,-3-\n);}
    \end{tikzpicture}
\end{center}

We will see throughout the paper that the cardinalities of the intersections between a Ferrers diagram and the board's diagonals retain a wealth of information about
the diagram. For instance, its degree can be easily computed as follows.

\begin{proposition}\label{prop:dFdiag}
We have  $\partial(\mF)=\max\{|\diag_i\cap\mF| : i\in\N\}$ for any Ferrers diagram $\mF \subseteq \{1, \ldots, n\} \times \{1, \ldots, m\}$.
\end{proposition}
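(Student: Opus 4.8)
The plan is to show that a diagonal of a Ferrers diagram is itself always a valid placement of non-attacking rooks, which gives one inequality, and then to use the K\"onig--Egev\'ary characterization of $\partial(\mF)$ (already invoked in the proof of Proposition~\ref{prop:partialFmaxrk}) to get the reverse inequality.

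\textbf{Lower bound.} Fix $i \in \N$ and consider $D = \diag_i \cap \mF$. The cells of $\diag_i$ are $(r, i-r+1)$ for $r \in \N$, so any two distinct cells of $D$ differ in both their row index $r$ and their column index $i-r+1$; hence $D$ is a placement of non-attacking rooks contained in $\mF$, and $D \in \NAR(\mF, |D|)$. Therefore $\partial(\mF) \ge |\diag_i \cap \mF|$ for every $i$, and so $\partial(\mF) \ge \max\{|\diag_i \cap \mF| : i \in \N\}$.

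\textbf{Upper bound.} Here I would use that $\partial(\mF)$ equals the minimum number of lines (rows and columns) needed to cover $\mF$ (K\"onig--Egev\'ary). Given any covering by a rows and b columns with $a+b = \partial(\mF)$, I want to exhibit a diagonal meeting $\mF$ in at least $a+b$ cells. Alternatively, and perhaps more cleanly, I would argue directly on the column lengths: writing $\mF = [c_1, \dots, c_m]$, the diagonal $\diag_i$ intersects column $j$ exactly when $1 \le i - j + 1 \le c_j$, i.e. when $j \le i$ and $c_j \ge i - j + 1$. A convenient choice is to take a maximal non-attacking rook placement $P$ realizing $|P| = \partial(\mF)$ and to ``push'' its rooks onto a common diagonal: since $\mF$ is a Ferrers diagram, if $(r,s) \in \mF$ then every cell weakly to the upper-left lies in $\mF$, which should let me slide the rooks of $P$ along the anti-diagonal direction into a single $\diag_i$ while staying inside $\mF$. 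Carrying out this sliding argument carefully — checking that the pushed rooks remain in $\mF$ and still occupy distinct diagonal cells — is the step I expect to be the main obstacle; the staircase shape is exactly what makes it work, but the bookkeeping needs care. Once both inequalities are in place the proposition follows immediately.

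As an aside, the same conclusion can be read off from later results if one prefers: Proposition~\ref{prop:partialFmaxrk} identifies $\partial(\mF)$ with the maximum rank of a matrix supported on $\mF$, and one can check that a matrix whose nonzero entries all lie on a single diagonal $\diag_i$ has rank equal to the number of such entries, which again gives $\partial(\mF) \ge |\diag_i \cap \mF|$; but the self-contained combinatorial argument above is the one I would present.
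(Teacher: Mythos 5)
Your lower bound is correct and is essentially the paper's argument (the paper builds a $0$-$1$ matrix supported on the largest diagonal and invokes Proposition~\ref{prop:partialFmaxrk}; you observe directly that a diagonal is a non-attacking placement, which is even more immediate from the definition of $\partial(\mF)$). The problem is the upper bound, which you explicitly leave unfinished: this inequality is exactly the nontrivial content of the proposition, and in the paper it is discharged by citing \cite[Claim~A]{gruica2022rook} together with Proposition~\ref{prop:partialFmaxrk}. Worse, the sliding argument as you sketch it would fail: if you ``slide the rooks of $P$ along the anti-diagonal direction into a single $\diag_i$,'' then any rook lying on a diagonal $\diag_s$ with $s<i$ must move weakly down or to the right to reach $\diag_i$, and the Ferrers property gives you no control in that direction (e.g.\ for $\mF=[2,1]$ and $P=\{(1,1)\}$ you cannot move the rook onto $\diag_2$ at $(2,1)$). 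So the step you flag as ``the main obstacle'' is a genuine gap, not mere bookkeeping.

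The gap is fixable with one extra idea: first normalize the maximal placement. If $P\in\NAR(\mF,k)$ and there are rooks $(r,c),(r',c')\in P$ with $r<r'$ and $c<c'$, replace them by $(r,c'),(r',c)$; both new cells lie in $\mF$ by the Ferrers property, and the result is still a non-attacking placement of size $k$. Iterating, you may assume $P=\{(r_1,c_1),\dots,(r_k,c_k)\}$ with $r_1<\dots<r_k$ and $c_1>\dots>c_k$. Then $r_j\ge j$ and $c_j\ge k-j+1$ for every $j$, so $(j,k-j+1)\in\mF$ because it lies weakly above and to the left of $(r_j,c_j)$. Hence $\diag_k$ contains the $k$ cells $(j,k-j+1)$, $j=1,\dots,k$, of $\mF$, giving $\max\{|\diag_i\cap\mF| : i\in\N\}\ge k=\partial(\mF)$. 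Note that the target diagonal is determined by the \emph{number} of rooks, and every rook moves weakly up-left, not along its own anti-diagonal; with this correction your plan goes through and you do not need K\"onig--Egev\'ary at all.
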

\begin{proof}
Proposition~\ref{prop:partialFmaxrk} and~\cite[Claim~A]{gruica2022rook}
    give $\partial(\mF)\in\{|\diag_i\cap\mF| : i\in\N\}$, which implies $\partial(\mF)\leq\max\{|\diag_i\cap\mF| : i\in\N\}$. On the other hand, let $r=\max\{|\diag_i\cap\mF| : i\in\N\}$ and define the matrix $M\in\mat$ by $M_{ij}=1$ if $(i,j)\in\diag_r\cap\mF$ and $M_{ij}=0$ otherwise. Then~$M$ is 
    supported on $\mF$ and $\rk(M)=r$. It follows that $\partial(\mF)\geq r$ by Proposition~\ref{prop:partialFmaxrk}.
\end{proof}

We start by labelling the elements of the board $\N\times\N$ based on the diagonal to which they belong.

\begin{notation}
\label{not:lex}
Let $\mF$ be a Ferrers diagram, $i\in\N$, and $j\in \{1,\ldots,i\}$. We denote by 
$(\Delta_i \cap \mF)_j$ the $j$-th element of
$\Delta_i \cap \mF$
 starting from the top-most row. We then sort the elements of the set
\begin{equation*}
    \{(\Delta_i \cap \mF)_j \, : \, i\in\N, \, j\in\{1,\ldots,|\diag_i \cap \mF|\}\}
\end{equation*}
according to the lexicographic order $<_\lex$ with respect to the pair~$(i,j)$. We denote the order on $\N \times \N$ simply by $\le$.
\end{notation}

Figure~\ref{fig:diag4} shows the fourth diagonal of $\mF=\{1,\ldots,5\}\times\{1,\ldots,5\}$ enumerated according to Notation~\ref{not:lex}. In Figure~\ref{fig:points} we display 
the elements $A=(\diag_3\cap\mF)_2=(2,2)$ and $B=(\diag_5\cap\mF)_5=(5,1)$. Note that $A<B$, since $(3,2)<_\lex(5,5)$.

\begin{figure}[h]
    \begin{minipage}{0.49\textwidth}
    \centering
    \begin{tikzpicture}[scale=0.6, line width=1pt]
    \draw (0,1) grid (5,-4);
    \foreach \n in {0,...,3} {\draw[draw=black, fill =CarnationPink!55]  (4-\n,1-\n) rectangle (3-\n,0-\n);}
    \node at (3.5,0.5) {$1$};
    \node at (2.5,-0.5) {$2$};
    \node at (1.5,-1.5) {$3$};
    \node at (0.5,-2.5) {$4$};
    \end{tikzpicture}
    \captionof{figure}{Sorted elements in $\diag_4\cap\mF$.}
    \label{fig:diag4}
    \end{minipage}\hfill
    \begin{minipage}{0.49\textwidth}
    \centering
    \begin{tikzpicture}[scale=0.6, line width=1pt]
    \draw (0,1) grid (5,-4);
    \draw[draw=black, fill =Red!55]  (2,-1) rectangle (1,0);
    \draw[draw=black, fill =Periwinkle!55]  (1,-4) rectangle (0,-3);
    \node at (1.5,-.5) {$A$};
    \node at (.5,-3.5) {$B$};
    \end{tikzpicture}
    \captionof{figure}{Representation of $A$ and $B$.}
    \label{fig:points}
    \end{minipage}
\end{figure}

\begin{remark} \label{rem:power}
The diagonals of a Ferrers diagram partition its entries $(a,b)$ according to the value of $\inv(a,b)$; recall Notation~\ref{not:inv}.
Therefore, and more formally,
\begin{align*}
    \diag_i\cap\mF = \{(a,b) \in \mF : \inv(\mF,(a,b)) = |\mF|-i\}.
\end{align*}
This means that the diagonal a cell of $\mF$ 
belongs to determines how many cells are crossed out when computing $\inv$ as in Notation~\ref{not:inv}. 
\end{remark}

\begin{notation}\label{not:atk}
For any subset $P \subseteq \N\times\N$, we denote by $\atk(P)$ the set of cells deleted by the elements of $P$ according to the statistic inv, namely
\begin{equation*}
\atk(P)=\bigcup_{(i,j)\in P}\{(1,j),\ldots,(i,j),(i,j-1),\ldots,(i,1)\}.
\end{equation*}
Moreover, we say that the rooks in $P$ \textbf{attack} the cells in $\atk(P)$.
\end{notation}

Note that for a Ferrers diagram $\mF$, $r \in \N$, and $P \in \NAR(\mF,r)$, we have $|\atk(P)|=|\mF|-\inv(\mF,P)$.

\begin{example}
    Let $P$ be the placement of non-attacking rooks on the Ferrers diagram $\mF$ as in 
    Figure~\ref{fig:F5332NAR}. One can check that
    \begin{equation*}
        \atk(P)=\{(1,1),(1,2),(1,4),(2,1),(2,2),(2,3),(2,4),(3,1),(3,2),(4,1)\}.
    \end{equation*}
    Moreover, $|\mF|=13$, $\inv(\mF;P)=3$ and $|\atk(P)|=10=|\mF|-\inv(\mF,P)$.
\end{example}

We continue by introducing
some operations that allow us to obtain Ferrers sub-diagrams of a Ferrers diagram.

\begin{definition} \label{def:reduction}
Let $\mF$ be a Ferrers diagram. The \textbf{puncturing} $\widehat\mF \subseteq \mF$ is the Ferrers sub-diagram of $\mF$ obtained by removing $\diag_r\cap\mF$, where $r:=\max\{s\in\N:\diag_s\cap\mF\neq\emptyset\}$. For $i\in\N$ and $j\in \{1,\ldots,i\}$, the \textbf{reduction} of $\mF$ with respect to $(\Delta_i \cap F)_j$ is the Ferrers sub-diagram of $\mF$ obtained by removing the set
\begin{equation*}
     \atk(\{(\diag_i\cap\mF)_j\})\cup\{(\diag_t\cap\mF)_s:(i,j)<_\lex (s,t)\}
\end{equation*}
from $\mF$ and aligning the remaining cells to the top and then to the left. We denote this new Ferrers diagram by $\Pi(\mF,i,j)$. 
\end{definition}

\begin{example}
    Let $\mF$ be the Ferrers diagram of Figure~\ref{fig:F5332NAR}. The cells marked with ``\textcolor{red}{$\times$}'' in Figure~\ref{fig:punctcells} are the ones deleted by the operation of puncturing on $\mF$. The diagram $\widehat\mF$ resulting from this operation is shown in Figure~\ref{fig:punct}.
    \begin{figure}[h]
    \begin{minipage}{0.49\textwidth}
    \centering
    \begin{tikzpicture}[scale=0.6, line width=1pt]
     \draw (0,0) grid (5,-5);
    \foreach \n in {0,...,3} {\draw[draw=black, fill =Goldenrod!55] (0,-\n) rectangle (1,-1-\n);}
    \foreach \n in {0,...,2} {\draw[draw=black, fill =Goldenrod!55] (1,-\n) rectangle (2,-1-\n);}
    \foreach \n in {0,...,2} {\draw[draw=black, fill =Goldenrod!55] (2,-\n) rectangle (3,-1-\n);}
    \foreach \n in {0,...,1} {\draw[draw=black, fill =Goldenrod!55] (3,-\n) rectangle (4,-1-\n);}
    \foreach \n in {0,...,0} {\draw[draw=black, fill =Goldenrod!55] (4,-\n) rectangle (5,-1-\n);}
     \node at (4.5,-.5) {$\color{Red}\times$};
     \node at (3.5,-1.5) {$\color{Red}\times$};
     \node at (2.5,-2.5) {$\color{Red}\times$};
    \end{tikzpicture}
    \captionof{figure}{Cells of $\mF$ deleted by puncturing.}
    \label{fig:punctcells}
    \end{minipage}
    \begin{minipage}{0.49\textwidth}
    \centering
    \begin{tikzpicture}[scale=0.6, line width=1pt]
    \draw (0,1) grid (5,-4);
    \draw[draw=black, fill =Goldenrod!55] (0,1) rectangle (1,0);
    \foreach \n in {0,...,1} {\draw[draw=black, fill =Goldenrod!55]  (2-\n,1-\n) rectangle (1-\n,0-\n);}
    \foreach \n in {0,...,2} {\draw[draw=black, fill =Goldenrod!55]  (3-\n,1-\n) rectangle (2-\n,0-\n);}
    \foreach \n in {0,...,3} {\draw[draw=black, fill =Goldenrod!55]  (4-\n,1-\n) rectangle (3-\n,0-\n);}
    \end{tikzpicture}
    \captionof{figure}{The Ferrers diagram $\widehat\mF$.}
    \label{fig:punct}
    \end{minipage}
    \end{figure}
    Figure~\ref{fig:reduct} shows the reduction $\Pi(\mF,4,2)$ of $\mF$ with respect to $(\Delta_4 \cap \mF)_2$. In Figure~\ref{fig:reductcells}, we marked with ``\textcolor{red}{$\times$}'' the cells of $\mF$ deleted by this operation. 
    \begin{figure}[h]
    \begin{minipage}{0.49\textwidth}
    \centering
    \begin{tikzpicture}[scale=0.6, line width=1pt]
    \draw (0,1) grid (5,-4);
    \draw[draw=black, fill =Goldenrod!55] (0,1) rectangle (1,0);
    \foreach \n in {0,...,1} {\draw[draw=black, fill =Goldenrod!55]  (2-\n,1-\n) rectangle (1-\n,0-\n);}
    \foreach \n in {0,...,2} {\draw[draw=black, fill =Goldenrod!55]  (3-\n,1-\n) rectangle (2-\n,0-\n);}
    \foreach \n in {0,...,3} {\draw[draw=black, fill =Goldenrod!55]  (4-\n,1-\n) rectangle (3-\n,0-\n);}
     \foreach \n in {1,2,0} {\draw[draw=black, fill =Goldenrod!55]  (5-\n,1-\n) rectangle (4-\n,0-\n);}
     \node at (0.5,-0.5) {$\color{Red}\times$};
     \node at (1.5,-.5) {$\color{Red}\times$};
     \node at (2.5,.5) {$\color{Red}\times$};
     \node at (2.5,-0.5) {\rook};
     \node at (4.5,.5) {$\color{Red}\times$};
     \node at (1.5,-1.5) {$\color{Red}\times$};
     \node at (2.5,-1.5) {$\color{Red}\times$};
     \node at (0.5,-2.5) {$\color{Red}\times$};
      \node at (3.5,-0.5) {$\color{Red}\times$};
    \end{tikzpicture}
    \caption{Cells of $\mF$ deleted by reducing.}
    \label{fig:reductcells}
    \end{minipage}
    \begin{minipage}{0.49\textwidth}
    \centering
    \begin{tikzpicture}[scale=0.6, line width=1pt]
    \draw (0,1) grid (5,-4);
    \draw[draw=black, fill =Goldenrod!55] (0,1) rectangle (1,0);
    \foreach \n in {0,...,1} {\draw[draw=black, fill =Goldenrod!55]  (2-\n,1-\n) rectangle (1-\n,0-\n);}
    \foreach \n in {0,...,0} {\draw[draw=black, fill =Goldenrod!55]  (3-\n,1-\n) rectangle (2-\n,0-\n);}
    \end{tikzpicture}
    \caption{The Ferrers diagram $\Pi(\mF,4,2)$.}
    \label{fig:reduct}
    \end{minipage}
    \end{figure}
\end{example}

We continue with some technical results.

\begin{lemma} 
\label{lem:interreduc}
Let $\mF$ be a Ferrers diagram, 
$r \in \N$, and let 
\begin{equation*}
   P=\{(\diag_{i_1}\cap\mF)_{j_1},\ldots,(\diag_{i_r}\cap\mF)_{j_r}\}\in\NAR(\mF,r), 
\end{equation*}
be sorted according to the lexicographic order from smallest to largest. Then
\begin{equation*}
   P\setminus\{(\diag_{i_r}\cap\mF)_{j_r}\}\in\NAR(\Pi(\mF,i_r,j_r),r-1), 
\end{equation*}
up to realigning the cells 
as in Definition~\ref{def:reduction}.
\end{lemma}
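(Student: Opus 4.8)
The plan is to verify, for $P':=P\setminus\{(\diag_{i_r}\cap\mF)_{j_r}\}$ and viewing it after the realignment of Definition~\ref{def:reduction}, the three requirements for membership in $\NAR(\Pi(\mF,i_r,j_r),r-1)$: that it has $r-1$ cells, that each of its cells survives the deletion used to form $\Pi(\mF,i_r,j_r)$ (so that its realigned image lies inside that diagram), and that the realigned cells are pairwise non-attacking. The cardinality is immediate, so the work is in the other two.

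Write $(a,b):=(\diag_{i_r}\cap\mF)_{j_r}$ and fix $k\in\{1,\dots,r-1\}$, $(c,d):=(\diag_{i_k}\cap\mF)_{j_k}$. First I would show $(c,d)$ is not removed when passing from $\mF$ to $\Pi(\mF,i_r,j_r)$. By Definition~\ref{def:reduction} the removed set consists of $\atk(\{(a,b)\})$ together with the cells whose diagonal–position label is lexicographically larger than that of $(a,b)$. Since $P$ is listed in increasing lexicographic order we have $(i_k,j_k)<_\lex(i_r,j_r)$, and since the labelling of Notation~\ref{not:lex} is injective, $(c,d)$ is not one of the lex-larger cells. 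Moreover $\atk(\{(a,b)\})=\{(1,b),\dots,(a,b),(a,b-1),\dots,(a,1)\}$ is contained in the union of row $a$ and column $b$; as $(c,d)\neq(a,b)$ and $P$ is non-attacking we get $c\neq a$ and $d\neq b$, hence $(c,d)\notin\atk(\{(a,b)\})$. Thus all $r-1$ cells of $P'$ survive, and the realignment therefore carries $P'$ into $\Pi(\mF,i_r,j_r)$.

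For the non-attacking property I would argue about the realignment itself. It is the composition of pushing the surviving cells up inside each column, followed by pushing them to the left; the first keeps every cell in its own column and can only decrease row indices, while the second keeps every cell in its own row and is injective on the cells lying in any fixed row. Now let $x,y$ be two cells of $P'$: in $\mF$ they lie in distinct rows and in distinct columns because $P$ is non-attacking. After the upward push they still lie in distinct columns. If at that point they also lie in distinct rows, the leftward push preserves this; if instead they now share a row, the leftward push sends them to distinct columns. Either way the realigned images of $x$ and $y$ are non-attacking, so the realigned $P'$ is a placement of $r-1$ non-attacking rooks inside $\Pi(\mF,i_r,j_r)$, which is the assertion.

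I expect the only point needing genuine care to be this last paragraph: one must track exactly what the two alignment steps do to row and column indices and, crucially, run the argument through columns — which survive the realignment up to a reindexing within each row — rather than through rows, which do not. Everything else, namely the survival of the $r-1$ rooks and the cardinality count, follows directly from the hypotheses that $P$ is non-attacking and sorted.
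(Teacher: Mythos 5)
Your cardinality and survival arguments are fine and essentially reproduce the paper's reasoning: the cells of $P':=P\setminus\{(a,b)\}$ avoid the lex-larger cells because $(a,b)$ is the lex-maximum of $P$, and they avoid $\atk(\{(a,b)\})$ because $P$ is non-attacking. The problem is the final paragraph, which you correctly identify as the delicate point but do not actually settle. Your dichotomy proves only that the realignment is \emph{injective} on $P'$, not that the images are non-attacking. Recall that non-attacking means distinct rows \emph{and} distinct columns. In your ``Case B'' (two cells sharing a row after the upward push), the leftward push keeps each cell in its own row, so the two images would still share a row and would therefore attack each other; ``distinct columns'' does not rescue them. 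In your ``Case A'' you secure distinct rows but never rule out the two images colliding in a column after the leftward push (your injectivity observation for the leftward push applies only within a fixed row, so it says nothing about cells in different rows landing in the same column). As written, the argument would ``succeed'' even in configurations where the realigned rooks genuinely attack, so it is not a proof.

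What is actually needed is to rule out both kinds of collisions, and this uses the lex-maximality of $(a,b)$ again, not just formal properties of ``push up, then push left.'' For the deletion defining $\Pi(\mF,i_r,j_r)$ one can compute the realignment explicitly: column $b$ is emptied entirely, each column $j<b$ loses exactly the single cell $(a,j)$ among the cells lying above the removed diagonals, and columns $j>b$ lose only cells at their bottom. Hence a surviving cell $(c,d)$ is sent to row $c-1$ if $d<b$ and $c>a$, and to row $c$ otherwise, while its column becomes $d-1$ if $d>b$ and $d$ otherwise. Column collisions are then impossible (one would need integers $d<b<d+1$), and a row collision would force two rooks $(c,d),(c-1,d')$ of $P'$ with $d<b<d'$ and $c-1>a$; but then $(c-1,d')$ lies on diagonal $(c-1)+d'-1\geq(a+1)+(b+1)-1>i_r$, contradicting that $(a,b)$ is the lex-maximum of $P$. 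Some argument of this kind (or an equivalent appeal to the structure of the deleted set) must replace your closing dichotomy.
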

\begin{proof}
The result is trivial for $r=1$, so suppose $r \ge 2$. Note that $(\diag_{i_r}\cap\mF)_{j_r}$ is the maximum in $P$ with respect to the lexicographic order. It follows that the cells in $P\setminus\{(\diag_{i_r}\cap\mF)_{j_r}\}$ cannot be in 
\begin{equation*}
    \atk((\diag_{i_r}\cap\mF)_{j_r})\cup\{(\diag_{s}\cap\mF)_{t}:s\in\N, t\in\{1,\ldots,|\diag_{s}\cap\mF|\},(i_r,j_r)<_\lex (s,t)\}.
\end{equation*}
This implies that $P\setminus\{(\diag_{i_r}\cap\mF)_{j_r}\}$ must be in $\Pi(\mF,i_r,j_r)$, up to cell realignment. The fact that $P\setminus\{(\diag_{i_r}\cap\mF)_{j_r}\}\in\NAR(\Pi(\mF,i_r,j_r),r-1)$, up to cell realignment, follows from the definition of reduction and the fact that $\atk(P\setminus\{(\diag_{i_r}\cap\mF)_{j_r}\}) \subseteq \mF$ contains $\atk(P\setminus\{(\diag_{i_r}\cap\mF)_{j_r}\})$ in $\Pi(\mF,i_r,j_r)$ as a subset, again up to cell realignment. 
\end{proof}

By Lemma~\ref{lem:interreduc}, we can extend the definition of reduction of a Ferrers diagram 
from one cell to multiple cells 
forming a placement of non-attacking rooks. 

\begin{definition}
Let $\mF$ be a Ferrers diagram and let $r \in\N$. Let $$P=\{(\diag_{i_1}\cap\mF)_{j_1},\ldots,(\diag_{i_r}\cap\mF)_{j_r}\} \in \NAR(\mF,r)$$ be ordered lexicographically from smallest to largest. We define $\Pi(\mF,P)$ iteratively as follows:
\begin{align*}
    \Pi(\mF,P) =  \begin{cases}
    \Pi(\mF,i_1,j_1) \quad &\textnormal{if $r=1$}, \\
    \Pi(\Pi(\mF,i_r,j_r),P \setminus \{(\diag_{i_r}\cap\mF)_{j_r}\}) \quad &\textnormal{otherwise.}
    \end{cases}
\end{align*}
\end{definition}

\begin{example}
Let $\mF$ be the Ferrers diagram of Figure~\ref{fig:F5332NAR}. Figure~\ref{fig:reductext} shows the reduction of~$\mF$ with respect to the placement of non-attacking rooks $P:=\{(\diag_4\cap\mF)_4,(\diag_4\cap\mF)_2\}$. The cells marked with ``\textcolor{red}{$\times$}'' in Figures~\ref{fig:reductext1} and~\ref{fig:reductext2} are the ones deleted by this operation.

	\begin{figure}[h]
    \begin{minipage}{0.33\textwidth}
    \centering
    \begin{tikzpicture}[scale=0.6, line width=1pt]
        \draw (0,0) grid (5,-5);
         \foreach \n in {0,...,3} {\draw[draw=black, fill =Goldenrod!55] (0,-\n) rectangle (1,-1-\n);}
         \foreach \n in {0,...,2} {\draw[draw=black, fill =Goldenrod!55] (1,-\n) rectangle (2,-1-\n);}
         \foreach \n in {0,...,2} {\draw[draw=black, fill =Goldenrod!55] (2,-\n) rectangle (3,-1-\n);}
         \foreach \n in {0,...,1} {\draw[draw=black, fill =Goldenrod!55] (3,-\n) rectangle (4,-1-\n);}
          \foreach \n in {0,...,0} {\draw[draw=black, fill =Goldenrod!55] (4,-\n) rectangle (5,-1-\n);}
         \node at (2.5,-1.5) {\rook};
        \node at (0.5,-3.5) {\rook};
        \node at (4.5,-.5) {$\color{Red}\times$};
        \node at (3.5,-1.5) {$\color{Red}\times$};
        \node at (2.5,-2.5) {$\color{Red}\times$};
        \node at (0.5,-0.5) {$\color{Red}\times$};
        \node at (0.5,-1.5) {$\color{Red}\times$};
        \node at (0.5,-2.5) {$\color{Red}\times$};
    \end{tikzpicture}
    \caption{$\mF$.}
    \label{fig:reductext1}
    \end{minipage}
    \begin{minipage}{0.33\textwidth}
    \centering
    \begin{tikzpicture}[scale=0.6, line width=1pt]
        \draw (1,0) grid (6,-5);
         \foreach \n in {0,...,2} {\draw[draw=black, fill =Goldenrod!55] (1,-\n) rectangle (2,-1-\n);}
         \foreach \n in {0,...,1} {\draw[draw=black, fill =Goldenrod!55] (2,-\n) rectangle (3,-1-\n);}
         \foreach \n in {0,...,0} {\draw[draw=black, fill =Goldenrod!55] (3,-\n) rectangle (4,-1-\n);}
         \node at (2.5,-1.5) {\rook};
        \node at (2.5,-.5) {$\color{Red}\times$};
        \node at (1.5,-1.5) {$\color{Red}\times$};
        \node at (1.5,-2.5) {$\color{Red}\times$};
    \end{tikzpicture}
    \caption{\centering $\Pi(\mF,4,4)$.}
    \label{fig:reductext2}
    \end{minipage}
    \begin{minipage}{0.32\textwidth}
    \centering
    \begin{tikzpicture}[scale=0.6, line width=1pt]
        \draw (1,0) grid (6,-5);
         \foreach \n in {0,...,0} {\draw[draw=black, fill =Goldenrod!55] (1,-\n) rectangle (2,-1-\n);}
         \foreach \n in {0,...,0} {\draw[draw=black, fill =Goldenrod!55] (2,-\n) rectangle (3,-1-\n);}
    \end{tikzpicture}
    \caption{\centering $\Pi(\mF,P)$.}
    \label{fig:reductext}
    \end{minipage}
    \end{figure}
\end{example}

The following result shows that diagonal equivalence is invariant under reduction.

\begin{lemma} \label{lem:punctdiageq}
Let $\mF$ and $\mF'$ be diagonally equivalent Ferrers diagrams in $\N\times\N$. Then $\Pi(\mF,i,j)$ and $\Pi(\mF',i,j)$ are diagonally equivalent for all $i \in \N$ and $j \in \{1, \dots, |\diag_i\cap\mF|\}$.
\end{lemma}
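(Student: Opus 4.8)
The plan is to prove the stronger, explicit statement that the diagonal cardinalities of $\Pi(\mF,i,j)$ — i.e.\ the sequence $\bigl(|\diag_k\cap\Pi(\mF,i,j)|\bigr)_{k\ge 1}$ — are completely determined by the sequence $\bigl(|\diag_k\cap\mF|\bigr)_{k\ge 1}$ together with the pair $(i,j)$. Since diagonal equivalence of $\mF$ and $\mF'$ means precisely that $|\diag_k\cap\mF|=|\diag_k\cap\mF'|$ for all $k$ (and then $j\le|\diag_i\cap\mF|=|\diag_i\cap\mF'|$ makes the reduction defined on both diagrams), this immediately gives the lemma.

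I would factor the reduction into two operations. First, let $\mF_1$ be obtained from $\mF$ by deleting every cell $(\diag_s\cap\mF)_p$ with $(i,j)<_\lex(s,p)$, i.e.\ all cells on diagonals $\diag_s$ with $s>i$, together with the cells of $\diag_i$ in positions $>j$. A direct check shows $\mF_1$ is still a Ferrers diagram (so no realignment is needed): if $(r',t')\le(r,t)\in\mF_1$ componentwise then $(r',t')\in\mF$, and since $(r,t)$ lies on a diagonal $\le i$, so does $(r',t')$; thus either $(r',t')$ lies on a diagonal strictly below $\diag_i$ (and so was not deleted) or $(r',t')=(r,t)$. Moreover, by construction $|\diag_k\cap\mF_1|$ equals $|\diag_k\cap\mF|$ for $k<i$, equals $j$ for $k=i$, and equals $0$ for $k>i$, so the diagonal cardinalities of $\mF_1$ depend only on those of $\mF$ and on $(i,j)$. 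Using $\mF\setminus(A\cup B)=(\mF\setminus B)\setminus A$ one obtains that $\Pi(\mF,i,j)$ is exactly the realignment of $\mF_1\setminus\atk(c)$ with $c:=(\diag_i\cap\mF)_j$, and $c$ is the bottom-most cell of the last non-empty diagonal $\diag_i$ of $\mF_1$. So it remains to analyse the operation ``delete the hook $\atk(c)$ of the bottom-most cell $c$ of the last diagonal and realign'', and to show that its effect on the diagonal sequence is a function of that sequence alone.

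This is the crux of the argument and the main obstacle, because the realignment re-sorts the columns and hence genuinely changes which diagonal a given cell sits on, while two diagrams with the same diagonal sequence may have different column lengths and may even carry the deleted cell $c$ in different positions. I would circumvent this by passing from cells to row lengths. Let $\mathcal G$ be a Ferrers diagram with last non-empty diagonal $\diag_i$ and column lengths $\ell_1\ge\dots\ge\ell_m$, let $c=(a,b)$ be the bottom-most cell of $\diag_i$ (so $a+b-1=i$), and let $\mathcal G'$ be the result of the operation. The hypotheses force $\ell_t\le a+b-t$ for all $t$ (last diagonal is $i$), $\ell_b=a$, and $\ell_{b-1}=a$ when $b\ge 2$ (minimality of the column index $b$); writing $M(v):=|\{t:\ell_t\ge v\}|$ for the length of row $v$ of $\mathcal G$, these give $M(a)=b$, hence $M(a)+a=i+1$. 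Deleting $\atk(c)$ removes column $b$ in full and row $a$ from columns $1,\dots,b-1$, so the column multiset of $\mathcal G'$ is $\{\ell_t-1:t\le b-1\}\cup\{\ell_t:t>b\}$; a short computation from the three structural facts shows the row-length function $M'$ of $\mathcal G'$ satisfies $M'(v)=M(v)-1$ for $v<a$ and $M'(v)=M(v+1)$ for $v\ge a$. Finally, for any Ferrers diagram $\mathcal H$ one has the elementary identity $|\diag_k\cap\mathcal H|=|\{v:1\le v\le k,\ M_{\mathcal H}(v)+v\ge k+1\}|$ (re-sum the count of $\diag_k\cap\mathcal H$ over anti-diagonals), and feeding the description of $M'$ into it — splitting the count at $v=a$ and treating $k\ge a$ (the two index ranges then combine to $[1,k+1]\setminus\{a\}$ and one invokes $M(a)+a=i+1$) and $k<a$ (the second range is empty and one invokes $\ell_1\ge a>k$) separately — yields
\begin{equation*}
|\diag_k\cap\mathcal G'|=|\diag_{k+1}\cap\mathcal G|-1 \ \ \text{for } 1\le k\le i-1, \qquad |\diag_k\cap\mathcal G'|=0 \ \ \text{for } k\ge i.
\end{equation*}

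Combining the two steps, the diagonal sequence of $\Pi(\mF,i,j)$ is obtained from that of $\mF$ by the explicit rule $(d_1,d_2,\dots)\mapsto(d_1,\dots,d_{i-1},j,0,\dots)\mapsto(d_2-1,\dots,d_{i-1}-1,\,j-1,\,0,\dots)$, which is a function of $(d_k)_k$ and $(i,j)$ only; this proves the lemma. (The same discussion shows that puncturing preserves diagonal equivalence as well, since $\widehat{\mF}$ merely deletes the last diagonal.)
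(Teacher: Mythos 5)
Your proof is correct, and its overall skeleton is the same as the paper's: both arguments prove the stronger, explicit statement that the diagonal sequence of $\Pi(\mF,i,j)$ is the function $(d_1,d_2,\dots)\mapsto(d_2-1,\dots,d_{i-1}-1,j-1,0,\dots)$ of the diagonal sequence of $\mF$ and the pair $(i,j)$, by first truncating away the lex-larger cells (the paper's observation (i), your $\mF_1$) and then analysing the deletion of $\atk(c)$ for the lex-maximal cell $c=(a,b)$. Where you differ is in the core computation. The paper replaces $\atk(c)$ by the ``co-hook'' $\{(1,1),\dots,(1,b),\dots,(a,b)\}$, argues the two deletions give the same realigned diagram because they remove the same number of cells from each row and column, and then uses that the co-hook meets each of $\Delta_1,\dots,\Delta_i$ exactly once; these steps are left at the level of ``one can check'' and ``easily follows''. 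You instead track the row-length function $M$ through the deletion ($M'(v)=M(v)-1$ for $v<a$, $M'(v)=M(v+1)$ for $v\ge a$) and feed it into the identity $|\diag_k\cap\mH|=|\{v\le k: M_{\mH}(v)+v\ge k+1\}|$, which makes the realignment step fully explicit and verifiable; the structural facts $\ell_b=\ell_{b-1}=a$ and $M(a)+a=i+1$ that you isolate are exactly what is needed and are correctly justified. A minor bonus of your version: your final formula is stated with the truncation applied first, which is the form actually used in the recursion of Theorem~\ref{thm:recursive} (the paper's own line ``$|\diag_s\cap\Pi(\mF,i,j)|=|\diag_{s+1}\cap\mF|-1$ for $s\in\{1,\dots,i-1\}$'' is only correct if $\mF$ there is read as the truncated diagram, since for $s=i-1$ the value is $j-1$, not $d_i-1$).
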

\begin{proof}
We first prove the following claim.

\begin{claim}
Let $(\diag_i\cap\mF)_j$ be the greatest element of $\mF$ with respect to the lexicographic order, that is $(\diag_i\cap\mF)_j=\max_\lex(\mF)$, and suppose that $(\diag_i\cap\mF)_j=(a,b)$, for some $a,b\in\N$. Then $\Pi(\mF,i,j)$ is the sub-board of $\N\times \N$ obtained by deleting the cells in the set $\{(1,1),\ldots,(1,b),\ldots,(a,b)\}$ and aligning the remaining cells to the top and then to the left.
\end{claim}
\begin{clproof}
Recall that, by Definition~\ref{def:reduction} and Notation~\ref{not:atk}, we have that $(\diag_i\cap\mF)_j$ is the Ferrers sub-diagram of $\N\times \N$ obtained by deleting the set  $\atk(\{(\diag_i\cap\mF)_j\})$, that is $\{(1,b),\ldots,(a,b),(a,b-1),\ldots,(a,1)\}$, since $(\diag_i\cap\mF)_j=\max_\lex(\mF)$. One can check that $\atk(\{(\diag_i\cap\mF)_j\})$ and $\{(1,1),\ldots,(1,b),\ldots,(a,b)\}$ contain the same number of cells in the rows and columns of $\mF$ and the statement easily follows.
\end{clproof}

We also note the following.
\begin{itemize}
    \item[(i)] Let $\mF'$ be obtained by deleting the set  $\{(\diag_s\cap\mF)_t:s\in\N, t\in\{1,\ldots,|\diag_s\cap\mF|\},(i,j)<_\lex (s,t)\}$ from $\mF$. Then $(\diag_i\cap\mF)_j=\max_\lex(\mF')$.
    \item[(ii)] For any $s\in\{1,\ldots,i\}$, the set $\{(1,1),\ldots,(1,b),\ldots,(a,b)\}$ intersects $\diag_s(\mF)$ in exactly one cell.
    \item[(iii)] The Ferrers diagram $\Pi(\mF,i,j)$ has $i-1$ diagonals.
\end{itemize}

As a consequence,  $|\diag_s\cap\Pi(\mF,i,j)|=|\diag_{s+1}\cap\mF|-1$ for any $s\in\{1,\ldots,i-1\}$, and $|\diag_s\cap\Pi(\mF,i,j)|=0$ for $s\geq i$. The statement now follows from the fact that $\mF$ and~$\mF'$ are diagonally equivalent.
\end{proof}

\section{Equivalence of Ferrers Diagrams and Applications}\label{sec:equiv}\label{sec:3}

This section contains the main results of the paper, showing that
the $q$-rook polynomials of a Ferrers diagram $\mF$ are fully determined by the cardinalities of its diagonals. Note that these cardinalities are far from being enough to determine the diagram~$\mF$ itself.

\begin{theorem}
\label{thm:bijection}
Let $\mF, \mF' \subseteq \N\times \N$ be diagonally equivalent Ferrers diagrams in $\N\times\N$. For~$r\in\N_0$ and $\ell\in\N$, we have  
\begin{align*}
    |\{P \in \NAR(\mF, r) : \inv(P) = \ell \}| = |\{P  \in \NAR(\mF', r) : \inv(P ) = \ell \}|.
\end{align*} 
In particular, $\mF$ and $\mF'$ have the same $r$-th $q$-rook polynomial for all $r\in\N_0$.
\end{theorem}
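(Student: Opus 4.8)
The plan is to prove the statement by induction on the number of rooks $r$, using the puncturing and reduction operations developed in Section~\ref{sec:2} and, crucially, Lemma~\ref{lem:interreduc} and Lemma~\ref{lem:punctdiageq}. The base case $r=0$ is immediate: $\NAR(\mF,0)$ and $\NAR(\mF',0)$ each consist of the empty placement, with $\inv$ equal to $|\mF|=|\mF'|$ (equality of cardinalities follows from diagonal equivalence, summing $|\diag_i\cap\mF|$ over $i$). For $r=1$, a single rook placed on the cell $(\diag_i\cap\mF)_j$ has, by Remark~\ref{rem:power}, $\inv$ equal to $|\mF|-i$; hence the number of placements of one rook with $\inv=\ell$ is exactly $|\diag_{|\mF|-\ell}\cap\mF|$, which is a function of the diagonal cardinalities only.

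For the inductive step with $r\ge 1$, I would partition $\NAR(\mF,r)$ according to the lexicographically greatest rook in the placement. Fix a cell $(\diag_i\cap\mF)_j$ and consider placements $P\in\NAR(\mF,r)$ whose lex-maximum rook is exactly $(\diag_i\cap\mF)_j$. By Lemma~\ref{lem:interreduc}, removing that rook and applying the reduction $\Pi(\mF,i,j)$ gives a bijection between such placements $P$ and placements $Q\in\NAR(\Pi(\mF,i,j),r-1)$; moreover $Q$ consists precisely of the cells of $P\setminus\{(\diag_i\cap\mF)_j\}$ after realignment, and its lex-maximum rook is strictly below $(i,j)$. The key arithmetic observation is how $\inv$ transforms under this bijection: from the proof of Lemma~\ref{lem:punctdiageq} we have $|\diag_s\cap\Pi(\mF,i,j)|=|\diag_{s+1}\cap\mF|-1$ for $s\in\{1,\dots,i-1\}$. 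I would show that $\inv(\mF,P) = \inv(\Pi(\mF,i,j),Q) + (|\mF|-|\Pi(\mF,i,j)|) - (\text{number of cells in }\atk(P)\text{ below the first rook's attacks already counted})$; more cleanly, one tracks that $|\mF|-\inv(\mF,P)=|\atk(P)|$ and $\atk(P)$ decomposes as the union of $\atk(\{(\diag_i\cap\mF)_j\})$ (which has $|\mF|-(|\mF|-i)=i$ cells, by Remark~\ref{rem:power}) together with the cells attacked by the remaining rooks, and that the latter corresponds under realignment to $\atk(Q)$ inside $\Pi(\mF,i,j)$. So $\inv(\mF,P)$ is determined by $\inv(\Pi(\mF,i,j),Q)$, $|\mF|$, $|\Pi(\mF,i,j)|$, and $i$ via an explicit formula that depends only on data that is preserved by diagonal equivalence.

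Now apply the inductive hypothesis: since $\mF$ and $\mF'$ are diagonally equivalent, Lemma~\ref{lem:punctdiageq} gives that $\Pi(\mF,i,j)$ and $\Pi(\mF',i,j)$ are diagonally equivalent, and they have one fewer diagonal, so by induction $|\{Q\in\NAR(\Pi(\mF,i,j),r-1):\inv(Q)=\ell'\}| = |\{Q\in\NAR(\Pi(\mF',i,j),r-1):\inv(Q)=\ell'\}|$ for every $\ell'$. Summing over all cells $(\diag_i\cap\mF)_j$ — note the index set $\{(i,j):i\in\N,\ j\in\{1,\dots,|\diag_i\cap\mF|\}\}$ is itself determined by the diagonal cardinalities, hence identical for $\mF$ and $\mF'$ — and using the $\inv$-transformation formula to match up the values of $\ell$ on both sides, we conclude $|\{P\in\NAR(\mF,r):\inv(P)=\ell\}| = |\{P\in\NAR(\mF',r):\inv(P)=\ell\}|$. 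The final ``in particular'' about $q$-rook polynomials is then immediate from Definition~\ref{def:qrook}, since $R_r(\mF;q)=\sum_\ell |\{P\in\NAR(\mF,r):\inv(\mF,P)=\ell\}|\,q^\ell$.

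The main obstacle I anticipate is pinning down the exact transformation of the $\inv$ statistic under the reduction-plus-realignment bijection of Lemma~\ref{lem:interreduc}, and verifying carefully that the realigned placement $Q$ has the same attacked-cell count inside $\Pi(\mF,i,j)$ as $P\setminus\{(\diag_i\cap\mF)_j\}$ had inside $\mF$ — the realignment shifts cells around, and one must be sure no attacked cells are created or destroyed in the process (the last sentence of the proof of Lemma~\ref{lem:interreduc} is exactly this point, and I would lean on it). A secondary subtlety is ensuring the bookkeeping is uniform in $\mF$ versus $\mF'$: the formula expressing $\inv(\mF,P)$ in terms of $\inv(\Pi(\mF,i,j),Q)$ must involve only $i$, $|\mF|$, and $|\Pi(\mF,i,j)|$ — all diagonal invariants — and not any finer feature of the diagram's shape.
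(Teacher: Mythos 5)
Your proposal is correct, but it takes a genuinely different route from the paper's own proof. The paper keeps $\mF$ and $\mF'$ fixed throughout the induction, strengthens the inductive statement to the existence of a bijection $P\mapsto P'$ that preserves $\inv$ \emph{and} keeps $\Pi(\mF,P)$ and $\Pi(\mF',P')$ diagonally equivalent, and peels off the lexicographically \emph{smallest} rook at each step, re-inserting a corresponding rook on the other side. You instead peel off the lexicographically \emph{largest} rook, partition $\NAR(\mF,r)$ by its position $(\diag_i\cap\mF)_j$, and apply the plain, unstrengthened inductive hypothesis to the smaller diagrams $\Pi(\mF,i,j)$ and $\Pi(\mF',i,j)$, which Lemma~\ref{lem:punctdiageq} guarantees are diagonally equivalent; this is essentially the mechanism the paper uses later for the recursion of Theorem~\ref{thm:recursive}, upgraded to track the full distribution of $\inv$ rather than just the polynomial. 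What your route buys is a cleaner induction (no auxiliary bijective data to carry along, and Theorem~\ref{thm:recursive} becomes an immediate by-product); what it costs is that you must pin down exactly how $\inv$ transforms under reduction. Your formula is the right one: the removed set is the disjoint union of $\atk(\{(\diag_i\cap\mF)_j\})$, which has exactly $i$ cells, and the lex-greater cells, none of which is attacked by $P$ since every rook sits on a lex-smaller cell; hence $\inv(\mF,P)=\inv(\Pi(\mF,i,j),Q)+|\mF|-i-|\Pi(\mF,i,j)|$, and all correction terms are diagonal invariants. The one point you rightly flag as delicate --- that $\atk(P)\setminus\atk(\{(\diag_i\cap\mF)_j\})$ corresponds under realignment exactly to the attack set of $Q$ inside $\Pi(\mF,i,j)$ --- is the real technical content and is only sketched: note that attack sets of distinct rooks can overlap, so $|\atk(P)|=i+|\atk(Q)|$ is not a naive sum of hook lengths but the statement that realignment preserves the attacked/unattacked status of every surviving cell. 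Since the paper leaves a comparable amount to the reader at its own key step (``it is easy to see that $\overline{P}$ satisfies all the needed properties''), your proposal is at an acceptable, and arguably more transparent, level of detail.
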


\begin{proof}
We will show that there is a one-to-one correspondence between placements $P \in \NAR(\mF, r)$ with $\inv(P)=\ell$ on the one hand, and placements $P' \in \NAR(\mF', r)$ with \smash{$\inv(P')=\ell$} on the other hand, under the assumption that $\Pi(\mF,P)$ and $\Pi(\mF',P')$ are diagonally equivalent.  
We proceed by induction on $r$. For $r=1$ we have 
\begin{align*}
    |\{P \in \NAR(\mF, 1) : \inv(P) = \ell \}| = |\diag_\ell\cap\mF| = |\{P' \in \NAR(\mF', 1) : \inv(P') = \ell \}|,
\end{align*}
where we used that $|\diag_\ell\cap\mF| = |\diag_\ell\cap\mF'|$.
More in detail, the map $(\diag_\ell\cap\mF)_s \mapsto (\diag_\ell\cap\mF')_s$ for $s \in \{1, \dots, |\diag_\ell\cap\mF|\}$ is a bijection between the elements of $\diag_\ell\cap\mF$ and the elements of $\diag_\ell\cap\mF'$ and by Lemma~\ref{lem:punctdiageq}, $\Pi(\mF,\ell,s)$ and $\Pi(\mF',\ell,s)$ are diagonally equivalent.

Now assume that the statement of the theorem holds true for all the placements of $r-1$ rooks.  Let $P=\{(\diag_{i_1}\cap\mF)_{j_1},\ldots,(\diag_{i_r}\cap\mF)_{j_r}\}\in\NAR(\mF,r)$ with $\inv(P) = \ell$ and where $(i_1,j_1) \le_{\lex} \dots \le_{\lex} (i_r,j_r)$. Consider $\overline{P}=\{(\diag_{i_2}\cap\mF)_{j_2},\ldots,(\diag_{i_r}\cap\mF)_{j_r}\}$. Then $|\overline{P}| = r-1$ and by assumption there exists a rook placement $\overline{P}' \subseteq \mF'$ with $|\overline{P}'| = r-1$, $\inv(P') = \inv(\overline{P}')$, and such that $\Pi(\mF,\overline{P})$ and $\Pi(\mF',\overline{P}')$ are again diagonally equivalent. Since no rook in $\overline{P}$ attacks $(\diag_{i_1}\cap\mF)_{j_1}$, there is a cell in $\Pi(\mF,\overline{P})$ corresponding to $(\diag_{i_1}\cap\mF)_{j_1}$ (according to the reduction) which we denote by $(\diag_{a}\cap\mF)_b \in \Pi(\mF,\overline{P})$. We let $\overline{P} = \overline{P}' \cup (\diag_{a}\cap\mF')_b$, and it is easy to see that $\overline{P}$ satisfies all the needed properties, concluding the proof.
\end{proof}

As an easy consequence, we obtain the following result. It implies that
the number of matrices of rank $r$ supported on $\mF$ only depends on the cardinalities of the diagonals of~$\mF$.
We are not able to provide an intuitive or easy explanation for this fact. The result also shows that two Ferrers diagrams have the same (classical) rook numbers
if and only if their diagonals have the same cardinalities.

\begin{corollary}\label{cor:surprise}
    Let $\mF,\mF' \subseteq \{1,\dots,n\} \times \{1,\dots,m\}$ be Ferrers diagrams. The following are equivalent. 
    \begin{enumerate}
        \item $\mF$ and $\mF'$ are diagonally equivalent.
        \item $R_1(\mF;q)=R_1(\mF';q)$.
        \item $R_r(\mF;q)=R_r(\mF';q)$ for all $r\in\N_0$.
        \item $|W_1(\mat[\mF])|=|W_1(\mat[\mF'])|$.
        \item $|W_r(\mat[\mF])|=|W_r(\mat[\mF'])|$ for all $r\in\N_0$.
        \item $\mF$ and $\mF'$ have the same (classical) rook numbers.
    \end{enumerate}
\end{corollary}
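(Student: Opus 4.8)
The plan is to prove Corollary~\ref{cor:surprise} by establishing a cycle of implications among the six conditions, using Theorem~\ref{thm:bijection} and Haglund's theorem (Theorem~\ref{thm:hag}) as the main engines, plus the classical Garsia--Remmel fact quoted in the text. Concretely I would show (1)$\Rightarrow$(3)$\Rightarrow$(2)$\Rightarrow$(1) to close one loop, (3)$\Leftrightarrow$(5) and (2)$\Leftrightarrow$(4) via Haglund, and (3)$\Leftrightarrow$(6) via the classical rook-polynomial statement; since all the conditions then become equivalent to (1), the corollary follows.

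First I would observe that (1)$\Rightarrow$(3) is immediate from Theorem~\ref{thm:bijection}: diagonally equivalent diagrams have the same $q$-rook polynomial $R_r(\mF;q)$ for every $r\in\N_0$ (taking $r=0$ is trivial since $|\mF|=|\mF'|$, which itself follows from summing the diagonal cardinalities). Then (3)$\Rightarrow$(2) is trivial. For (2)$\Rightarrow$(1), I would unwind the definition of $R_1(\mF;q)=\sum_{P\in\NAR(\mF,1)}q^{\inv(\mF,P)}$: a single-rook placement at a cell $(a,b)\in\mF$ contributes $q^{|\mF|-i}$ where $i$ is the index of the diagonal containing $(a,b)$ (by Remark~\ref{rem:power}, $\inv(\mF,(a,b))=|\mF|-i$ exactly when $(a,b)\in\diag_i\cap\mF$). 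Hence $R_1(\mF;q)=\sum_{i\ge 1}|\diag_i\cap\mF|\,q^{|\mF|-i}$. Comparing this polynomial coefficient-by-coefficient — after noting that equality of the polynomials forces equality of the top degrees $|\mF|=|\mF'|$ (the degree is $|\mF|-1$, attained by the first diagonal, which always has cardinality $1$) — yields $|\diag_i\cap\mF|=|\diag_i\cap\mF'|$ for all $i$, i.e.\ (1). This closes the loop $(1)\Leftrightarrow(2)\Leftrightarrow(3)$.

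Next I would bring in Haglund's theorem. Theorem~\ref{thm:hag} gives $W_r(\mat[\mF]) = (q-1)^r q^{|\mF|-r} R_r(\mF;q^{-1})$, and since $|\mF|$ is determined by the diagonal cardinalities (hence is common to diagonally equivalent diagrams, and is also forced equal under any of the other numerical hypotheses as above), the map $R_r(\mF;q)\mapsto W_r(\mat[\mF])$ is invertible given $|\mF|$: one recovers $R_r(\mF;q)$ from $W_r(\mat[\mF])$ by substituting, dividing by $(q-1)^r q^{|\mF|-r}$, and inverting $q\mapsto q^{-1}$. Therefore (3)$\Leftrightarrow$(5) and (2)$\Leftrightarrow$(4), once one checks $|\mF|=|\mF'|$ holds under (4) or (5) — which it does, since $W_r$ being a polynomial identity in $q$ of known degree pins down $|\mF|$ (alternatively, $\sum_r W_r(\mat[\mF])=q^{|\mF|}$). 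Finally, (3)$\Leftrightarrow$(6) is exactly the remark attributed to Garsia--Remmel (\cite[page~257]{GaRe86}) and recalled in the text: two Ferrers diagrams have the same classical rook polynomial if and only if they have the same $q$-rook polynomials, and ``same rook numbers'' means ``same classical rook polynomial.'' Assembling these equivalences proves the corollary.

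The only step requiring genuine care — the ``main obstacle,'' though it is minor — is the bookkeeping around $|\mF|$: several of the implications (the Haglund-based ones in particular, and the coefficient-matching in (2)$\Rightarrow$(1)) implicitly need $|\mF|=|\mF'|$, and one must verify this is a consequence of each numerical hypothesis rather than assuming it. This is handled uniformly by a degree argument: $R_1$ has degree $|\mF|-1$, $R_r$ has degree $|\mF|-r$, $W_r(\mat[\mF])$ as a polynomial in $q$ has degree $|\mF|$, and the total count $\sum_r W_r(\mat[\mF])=q^{|\mF|}$ — any of these reads off $|\mF|$ from the data in conditions (2)--(6). With that dispatched, everything else is a direct invocation of Theorem~\ref{thm:bijection}, Theorem~\ref{thm:hag}, and the classical fact.
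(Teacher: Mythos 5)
Your proposal is correct in substance and follows essentially the same route as the paper: the paper's own proof consists precisely of the identity $R_1(\mF;q)=\sum_i |\diag_i\cap\mF|\,q^{|\mF|-i}$ (your (2)$\Rightarrow$(1) step), the citation of Garsia--Remmel for (3)$\Leftrightarrow$(6), and the remark that the remaining implications follow easily from Theorem~\ref{thm:bijection} and Haglund's theorem --- which is exactly what you fill in.

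One caveat: two of the auxiliary degree claims in your final paragraph are false as stated, though the conclusion they are meant to support ($|\mF|$ is recoverable from each of the numerical hypotheses) is true. First, $R_r(\mF;q)$ does not have degree $|\mF|-r$ in general: its degree is $|\mF|$ minus the minimum of $|\atk(P)|$ over $P\in\NAR(\mF,r)$, and for $r$ rooks this minimum is at least $\binom{r+1}{2}$, not $r$. Second, $W_r(\mat[\mF])$ does not have degree $|\mF|$ as a polynomial in $q$: for instance $W_1(\mat[\mF])=(q-1)\sum_i |\diag_i\cap\mF|\,q^{i-1}$ has degree $\ell=\max\{i:\diag_i\cap\mF\neq\emptyset\}$, which is in general much smaller than $|\mF|$. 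Neither error is fatal, because correct substitutes are at hand and you partly name them: under (3) one has $R_0(\mF;q)=q^{|\mF|}$ by definition; under (2) the degree of $R_1$ is indeed $|\mF|-1$; under (5) one sums to get $\sum_r W_r(\mat[\mF])=q^{|\mF|}$; and under (4) the identity $W_1(\mat[\mF])/(q-1)=\sum_i |\diag_i\cap\mF|\,q^{i-1}$ in fact hands you all the diagonal cardinalities directly (so (4)$\Rightarrow$(1) without any detour through Haglund inversion). With those repairs the argument is complete.
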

\begin{proof}
Define $d_i=|\triangle_i \cap \mF|$ for all $i \in \N$. By Remark~\ref{rem:power} we have
\begin{align*}
    R_1(\mF;q) = \sum_{P \in \mF}q^{\inv(\mF,P)} = \displaystyle\sum_{i=1}^\ell d_iq^{|\mF|-i},
\end{align*}
where $\ell = \max\{i : d_i \ne 0\}$. This shows that 2. implies 1.
The fact that 6. is equivalent to~3. was shown in~\cite[page 257]{GaRe86}.
We omit the proofs for the other implications, as they can be checked easily or obtained from Theorem~\ref{thm:bijection}.
\end{proof}

\begin{remark}\label{rem:ferrseq}
Fix $r,\ell\in\N$ and $d_1,\ldots,d_\ell\in\N_0$. Then there exists a Ferrers diagram $\mF$ with
$|\diag_i\cap\mF|=d_i$ for all $i\in\{1,\ldots,\ell\}$ if and only if $d_1=\cdots=d_\ell=0$ or there exists an integer $r \in \{1, \dots, \ell\}$ such that $d_1=1, d_2=2, \dots, d_r=r$ and $d_r \ge d_{r+1} \ge d_\ell$. 
For such a sequence and for a Ferrers diagram $\mF$ with $|\diag_i\cap\mF|=d_i$ for all $i$, by Proposition~\ref{prop:dFdiag} we have that $r=\max\{d_1,\dots,d_\ell\}=\partial(\mF)$. 
We write $\partial(d_1,\dots,d_\ell)$ instead of $\partial(\mF)$. 
\end{remark}

The previous remark motivates the following concepts.

\begin{definition} \label{def:fdseq}
Let $\ell\in\N$ and let $d_1,\ldots,d_\ell\in\N$. We say that $(d_1,\ldots,d_\ell)$ is a \textbf{Ferrers sequence} if $(d_1,\ldots,d_\ell)=(0,\ldots,0)$ or there exists $r \in \{1, \dots, \ell\}$ such that $d_1=1, d_2=2, \dots, d_r=r$ and   $d_r \ge \dots \ge d_\ell$. For a Ferrers sequence $(d_1,\ldots,d_\ell)$
and $r \in \N$, we denote by $R_q(d_1,\ldots,d_\ell;q)$ the $r$-th $q$-rook polynomial of a Ferrers diagram $\mF$ such that $\diag_i\cap\mF=d_i$ for all $i\in\{1,\ldots,\ell\}$. We denote by $\mS_\ell$ the set of Ferrers sequences of length $\ell$.
\end{definition}

For example, the diagrams in Figures~\ref{fig:equiv1}, \ref{fig:equiv2} and~\ref{fig:equiv3} are diagonally equivalent and all correspond to the Ferrers sequence $(1,2,3,3,2)$.
	\begin{figure}[h]
    \begin{minipage}{0.32\textwidth}
    \centering
    \begin{tikzpicture}[scale=0.6, line width=1pt]
        \draw (0,0) grid (5,-5);
         \foreach \n in {0,...,4} {\draw[draw=black, fill =Goldenrod!55] (0,-\n) rectangle (1,-1-\n);}
         \foreach \n in {0,...,1} {\draw[draw=black, fill =Goldenrod!55] (1,-\n) rectangle (2,-1-\n);}
         \foreach \n in {0,...,1} {\draw[draw=black, fill =Goldenrod!55] (2,-\n) rectangle (3,-1-\n);}
         \foreach \n in {0,...,0} {\draw[draw=black, fill =Goldenrod!55] (3,-\n) rectangle (4,-1-\n);}
          \foreach \n in {0,...,0} {\draw[draw=black, fill =Goldenrod!55] (4,-\n) rectangle (5,-1-\n);}
    \end{tikzpicture}
    \caption{$[5,2,2,1,1]$}
    \label{fig:equiv1}
    \end{minipage}
    \begin{minipage}{0.32\textwidth}
    \centering
    \begin{tikzpicture}[scale=0.6, line width=1pt]
        \draw (0,0) grid (5,-5);
         \foreach \n in {0,...,3} {\draw[draw=black, fill =Goldenrod!55] (0,-\n) rectangle (1,-1-\n);}
         \foreach \n in {0,...,3} {\draw[draw=black, fill =Goldenrod!55] (1,-\n) rectangle (2,-1-\n);}
         \foreach \n in {0,...,2} {\draw[draw=black, fill =Goldenrod!55] (2,-\n) rectangle (3,-1-\n);}
    \end{tikzpicture}
    \caption{$[4,4,3]$}
    \label{fig:equiv2}
    \end{minipage}
        \begin{minipage}{0.32\textwidth}
    \centering
    \begin{tikzpicture}[scale=0.6, line width=1pt]
        \draw (0,0) grid (5,-5);
         \foreach \n in {0,...,2} {\draw[draw=black, fill =Goldenrod!55] (0,-\n) rectangle (1,-1-\n);}
         \foreach \n in {0,...,2} {\draw[draw=black, fill =Goldenrod!55] (1,-\n) rectangle (2,-1-\n);}
         \foreach \n in {0,...,1} {\draw[draw=black, fill =Goldenrod!55] (2,-\n) rectangle (3,-1-\n);}
         \foreach \n in {0,...,1} {\draw[draw=black, fill =Goldenrod!55] (3,-\n) rectangle (4,-1-\n);}
          \foreach \n in {0,...,0} {\draw[draw=black, fill =Goldenrod!55] (4,-\n) rectangle (5,-1-\n);}
    \end{tikzpicture}
    \caption{$[3,3,2,2,1]$}
    \label{fig:equiv3}
    \end{minipage}
	\end{figure}

In the next theorem we give a recursive formula for the $q$-rook polynomials of a Ferrers diagram only taking into account its diagonals. The result is the diagonal-analogue
of~\cite[Theorem~1.1]{GaRe86}.

\begin{theorem}
\label{thm:recursive}
    Let $r\in\Z$, $\ell\in\N$ and let $(d_1,\ldots,d_\ell)\in \mS_\ell$ be a Ferrers sequence. We have 
    \begin{equation*}
        R_r(d_1,\ldots,d_\ell;q)= R_r(d_1,\ldots,d_{\ell-1};q)q^{d_\ell}+\sum_{j=1}^{d_\ell}R_{r-1}(d_2-1,\ldots,d_{\ell-1}-1,j-1;q)q^{d_\ell-j}
    \end{equation*}
    if $1\le r\le \partial(d_1,\dots,d_\ell)$, and
    \begin{equation*}
        R_r(d_1,\ldots,d_\ell;q)=
        \begin{cases}
            q^{|\mF|} & \textup{ if } r=0,\\
            0 & \textup{ if }  r> \partial(d_1,\dots,d_\ell)  \textup{ or } r<0.
        \end{cases}
    \end{equation*}
\end{theorem}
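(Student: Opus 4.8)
The plan is to establish the recursion by directly manipulating non-attacking rook placements on a Ferrers diagram $\mF$ realizing the sequence $(d_1,\dots,d_\ell)$, and then transferring the count to $q$-rook polynomials via the $\inv$ statistic. First I would fix such an $\mF$; by Remark~\ref{rem:ferrseq} and Theorem~\ref{thm:bijection} the choice is immaterial for the $q$-rook polynomials. The boundary cases ($r=0$, $r<0$, $r>\partial(d_1,\dots,d_\ell)$) are immediate from Definition~\ref{def:qrook} together with Proposition~\ref{prop:dFdiag}, so the content is the displayed recursion for $1\le r\le\partial(d_1,\dots,d_\ell)$.

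For the main identity I would partition $\NAR(\mF,r)$ according to whether the last (i.e.\ $\lex$-largest, hence lying on the top diagonal $\Delta_\ell$) diagonal $\Delta_\ell\cap\mF$ is used by the placement. If no rook lies on $\Delta_\ell\cap\mF$, then $P$ is a placement of $r$ rooks on the puncturing $\widehat\mF$, which has diagonal sequence $(d_1,\dots,d_{\ell-1})$; since all $d_\ell$ cells of $\Delta_\ell\cap\mF$ survive uncancelled (no rook attacks them — they are on the outermost anti-diagonal and everything attacked lies weakly to the left and above), we get $\inv(\mF,P)=\inv(\widehat\mF,P)+d_\ell$, contributing $q^{d_\ell}R_r(d_1,\dots,d_{\ell-1};q)$. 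If exactly one rook, say the $j$-th cell of $\Delta_\ell\cap\mF$ from the top, lies on $\Delta_\ell\cap\mF$, remove it and apply Lemma~\ref{lem:interreduc}: the remaining $r-1$ rooks form a placement on $\Pi(\mF,\ell,j)$. By the argument inside the proof of Lemma~\ref{lem:punctdiageq} (items (i)--(iii) and the Claim), $\Pi(\mF,\ell,j)$ has diagonal sequence $(d_2-1,\dots,d_{\ell-1}-1,j-1)$ — the top cell of $\Delta_\ell$ contributes nothing, reduction shifts indices down by one and shaves one cell off each remaining diagonal up through $\Delta_{\ell-1}$, and the new last diagonal has $j-1$ cells because exactly $j-1$ cells of $\Delta_\ell\cap\mF$ sit above the chosen rook. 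The bookkeeping of $\inv$ under this reduction: the chosen rook itself cancels its whole hook, and among the $d_\ell$ cells of $\Delta_\ell\cap\mF$ the $j-1$ above it are cancelled while the $d_\ell-j$ below it are not and do not appear in $\Pi(\mF,\ell,j)$, so $\inv(\mF,P)=\inv(\Pi(\mF,\ell,j),P\setminus\{\text{rook}\})+(d_\ell-j)$. Summing over $j\in\{1,\dots,d_\ell\}$ gives $\sum_{j=1}^{d_\ell}q^{d_\ell-j}R_{r-1}(d_2-1,\dots,d_{\ell-1}-1,j-1;q)$, and adding the two cases yields the claimed formula. One must also check that $(d_2-1,\dots,d_{\ell-1}-1,j-1)$ is a genuine Ferrers sequence for every $j$ in range (using $d_r\ge d_{r+1}\ge\cdots\ge d_\ell$ and $d_1=1,\dots,d_r=r$), so that $R_{r-1}$ of it is defined; if it fails to be one the corresponding term is $0$, consistent with there being no valid placement.

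I expect the main obstacle to be the precise accounting of $\inv$ under reduction with respect to a single cell of the \emph{top} diagonal, i.e.\ verifying carefully that the cells of $\Delta_\ell\cap\mF$ strictly below the chosen rook are exactly the ones deleted by the clause $\{(\diag_t\cap\mF)_s:(i,j)<_\lex(s,t)\}$ in Definition~\ref{def:reduction} and contribute the shift $d_\ell-j$, while no other diagonal's contribution to $\inv$ changes beyond the uniform index relabelling. This is essentially a careful re-reading of Remark~\ref{rem:power}, Notation~\ref{not:atk}, and the Claim in the proof of Lemma~\ref{lem:punctdiageq}, but it is where the diagonal-based viewpoint must be pinned down against the combinatorial definition. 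A cleaner alternative, which I would use to cross-check, is to avoid $\inv$ entirely: apply Haglund's Theorem~\ref{thm:hag} to convert the statement into the corresponding recursion for $W_r(\mat[\mF])$ — counting matrices over $\F_q$ supported on $\mF$ by rank — expand along the last column or the outer anti-diagonal, and verify the two recursions match after the substitution $q\mapsto q^{-1}$ and the factor $(q-1)^rq^{|\mF|-r}$; this trades delicate statistic-tracking for linear-algebra bookkeeping.
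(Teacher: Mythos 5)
Your proposal follows essentially the same route as the paper's proof: reduce to a single representative via Theorem~\ref{thm:bijection}, split $\NAR(\mF,r)$ according to whether and where the lex-largest rook sits on $\Delta_\ell$, handle the two cases by puncturing and by the reduction $\Pi(\mF,\ell,j)$, and track $\inv$ via the shifts $d_\ell$ and $d_\ell-j$ together with the diagonal sequences $(d_1,\dots,d_{\ell-1})$ and $(d_2-1,\dots,d_{\ell-1}-1,j-1)$. The one slip is the claim that the $j-1$ cells of $\Delta_\ell\cap\mF$ above the chosen rook are cancelled --- they are not (they lie on the same anti-diagonal, hence outside the rook's hook, and survive as the new last diagonal of $\Pi(\mF,\ell,j)$, counted identically on both sides), but this does not affect your correct identity $\inv(\mF,P)=\inv(\Pi(\mF,\ell,j),P\setminus\{\text{rook}\})+d_\ell-j$.
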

\begin{proof}
The cases $r=0$, $r > \partial(d_1,\dots,d_\ell)$ and $r<0$ follow immediately from the definition of $q$-rook polynomial. We assume $1 \le r\le \partial(d_1,\dots,d_\ell)$ in the remainder of the proof. By Theorem~\ref{thm:bijection}, it suffices to prove the statement for a Ferrers diagram $\mF$ with $|\diag_i\cap\mF|=d_i$ for $i\in\{1,\ldots,\ell\}$ and $|\diag_i\cap\mF|=0$ otherwise. By the definition of puncturing and reduction (see Definition~\ref{def:reduction}) and Lemma~\ref{lem:interreduc}, we have
\begin{align*}
     R_r(\mF;q)&=\sum_{P\in\NAR(\mF)}q^{\inv(P)} \\
     &= R_r(\widehat\mF;q)q^{d_\ell}+\sum_{j=1}^{d_\ell}\Pi(\mF,\ell,j)q^{|\{(\diag_\ell\cap\mF)_s:s\in\{j+1,\ldots,d_\ell\}\}|}\\
     &=R_r(\widehat\mF;q)q^{d_\ell}+\sum_{j=1}^{d_\ell}\Pi(\mF,\ell,j)q^{d_\ell-j}.
\end{align*}
Moreover, the following hold.
\begin{itemize}
    \item[(i)] By definition of puncturing,  $\widehat\mF$ is such that $\diag_i\cap\widehat\mF=\diag_i\cap\mF=d_i$ if $i\in\{1,\ldots,\ell-1\}$ and $|\diag_i\cap\widehat\mF|=0$ otherwise.
    
    \item[(ii)] By the argument used in  Lemma~\ref{lem:punctdiageq}, the diagram $\Pi(\mF,\ell,j)$ satisfies
    \begin{align*}
    \begin{cases}
        |\diag_i\cap\Pi(\mF,\ell,j)|=|\diag_i\cap\mF|-1=d_i-1 \quad &\textnormal{if $i\in\{1,\ldots,\ell-1\}$,} \\
        |\diag_i\cap\Pi(\mF,\ell,j)|=j-1 \quad &\textnormal{if $i= \ell$,} \\
        |\diag_i\cap\Pi(\mF,\ell,j)|=0 \quad &\textnormal{if $i >\ell$.}
    \end{cases}
    \end{align*}
\end{itemize}
This concludes the proof.
\end{proof}

We have seen that the $q$-rook polynomials of a Ferrers diagram are fully determined by the cardinalities of its diagonals. In particular,
diagonally equivalent Ferrers diagrams share the same $q$-rook polynomials.
The next definition selects the ``best'' representative.

\begin{definition} \label{def:canon}
Let $\mF$ be a non-empty Ferrers diagram. The \textbf{canonical form} $\overline{\mF}$ of $\mF$ is the Ferrers diagram uniquely defined by
\begin{equation*}
        \diag_i\cap\overline{\mF}=\{(s,i-s+1): s\in\{1,\ldots,|\diag_i\cap\mF|\}\} \quad \textnormal{for all $i\in\N$.}
\end{equation*}
We set the canonical form of $\mF=[]$ to be $\overline{\mF}=[]$.
\end{definition}

Note that the canonical form is obtained by aligning all the diagonals to the top border of $\N \times \N$, as explained in Example~\ref{ex:canonical} and the related figures.

\begin{example}\label{ex:canonical}
    Consider the Ferrers diagram $\mF=[5,3,3,2]$. The canonical form $\overline{\mF}$ is the Ferrers diagram $[4,3,3,2,1]$. Figures~\ref{fig:F} and~\ref{fig:Fbar} give a visualization of these diagrams.

    \begin{figure}[h]
    \begin{minipage}{0.49\textwidth}
        \centering
        \begin{tikzpicture}[scale=0.6, line width=1pt]
        \draw (0,0) grid (5,-5);
         \foreach \n in {0,...,4} {\draw[draw=black, fill =Goldenrod!55] (0,-\n) rectangle (1,-1-\n);}
         \foreach \n in {0,...,2} {\draw[draw=black, fill =Goldenrod!55] (1,-\n) rectangle (2,-1-\n);}
         \foreach \n in {0,...,2} {\draw[draw=black, fill =Goldenrod!55] (2,-\n) rectangle (3,-1-\n);}
         \foreach \n in {0,...,1} {\draw[draw=black, fill =Goldenrod!55] (3,-\n) rectangle (4,-1-\n);}
    \end{tikzpicture}
    \captionof{figure}{$\mF$}
    \label{fig:F}
    \end{minipage}\hfill
    \begin{minipage}{0.49\textwidth}
        \centering
        \begin{tikzpicture}[scale=0.6, line width=1pt]
        \draw (0,0) grid (5,-5);
         \foreach \n in {0,...,3} {\draw[draw=black, fill =Goldenrod!55] (0,-\n) rectangle (1,-1-\n);}
         \foreach \n in {0,...,2} {\draw[draw=black, fill =Goldenrod!55] (1,-\n) rectangle (2,-1-\n);}
         \foreach \n in {0,...,2} {\draw[draw=black, fill =Goldenrod!55] (2,-\n) rectangle (3,-1-\n);}
         \foreach \n in {0,...,1} {\draw[draw=black, fill =Goldenrod!55] (3,-\n) rectangle (4,-1-\n);}
        \foreach \n in {0,...,0} {\draw[draw=black, fill =Goldenrod!55] (4,-\n) rectangle (5,-1-\n);}
    \end{tikzpicture}
    \captionof{figure}{$\overline{\mF}$}
    \label{fig:Fbar}
    \end{minipage}
    \end{figure}   
\end{example}

It turns out that a Ferrers diagram is in canonical form if and only if it is \textit{initially convex} in the sense of~\cite[Definition~4.4]{neri2023proof}.
Despite being curiously the same, the two notions appear in different contexts and for different purposes. 

We recall that the \emph{trailing degree} of a polynomial $f(q)=\sum_{i} a_iq^i \in \Z[q]$ is the smallest~$i$ with $a_i \neq 0$, where the zero polynomial has trailing degree $-\infty$. From now on, we denote the trailing degree of a polynomial $f(q)\in \Z[q]$ by $\tau(f(q))$. The next lemma shows that if a diagram is in canonical form, then the trailing degree of its $q$-rook polynomials can be easily computed. We omit the proof since it directly follows from the definition of canonical form.

\begin{lemma}
Let $\mF \subseteq \N\times\N$ be a Ferrers diagram and let $r\in\N$. If $\mF$ is in canonical form then a placement $P$ of $r$ non-attacking rooks with $\inv(P) = \tau(R_r(\mF;q))$, should it exist, is obtained by placing a rook in the rightmost cell of each of the $r$ topmost rows of $\mF$.  
\end{lemma}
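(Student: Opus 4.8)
The plan is to show that when $\mF$ is in canonical form, the placement $P$ that puts a rook in the rightmost cell of each of the topmost $r$ rows simultaneously (a) is a legitimate placement of non-attacking rooks in $\mF$, and (b) minimizes $\inv(\mF,P)$ among all placements of $r$ non-attacking rooks, equivalently maximizes $|\atk(P)|$. For (a), recall that in a canonical diagram the diagonals are top-aligned, so the top $r$ rows are rows $1,\dots,r$, each nonempty since $r\le\partial(\mF)=\max_i|\diag_i\cap\mF|$ and top-alignment forces row $i$ to be nonempty whenever some longer diagonal exists. The rightmost cell of row $a$ lies in column $c_a$ where $c_1\ge c_2\ge\cdots$ are the (weakly decreasing) row lengths of the transpose picture; crucially, because the diagram is a Ferrers diagram these rightmost cells sit in strictly different columns as $a$ increases only if the $c_a$ are strictly decreasing — so the cleaner statement to prove is that the rightmost cells of rows $1,\dots,r$ lie on distinct diagonals, which is automatic, and that they can be chosen to be non-attacking. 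I would argue this directly: the rightmost cell of row $a$ is the unique cell of $\diag_{i_a}\cap\mF$ in row $a$ for the appropriate $i_a$, and since a canonical diagram has its $j$-th diagonal occupying rows $1,\dots,|\diag_j\cap\mF|$, the rightmost cell of row $a$ is $(\diag_{j}\cap\mF)_a$ for $j$ maximal with $|\diag_j\cap\mF|\ge a$; these cells over $a=1,\dots,r$ are pairwise non-attacking because each occupies a distinct row by construction and a distinct column since moving to a lower row strictly decreases the column index of the rightmost cell (a Ferrers-diagram property combined with canonicity).

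Next, for the optimality claim, the key identity is $\inv(\mF,P)=|\mF|-|\atk(P)|$, so minimizing $\inv$ is the same as maximizing $|\atk(P)|$. For a placement $P=\{(a_1,b_1),\dots,(a_r,b_r)\}$ of non-attacking rooks with distinct rows $a_1<\cdots<a_r$ and distinct columns, the crossed-out region $\atk(P)$ consists, in each row $a_k$, of the cells $(a_k,1),\dots,(a_k,b_k)$ together with, in each column $b_k$, the cells $(1,b_k),\dots,(a_k,b_k)$. A standard rook-theory computation (inclusion–exclusion over the "hook" shapes, using that the rooks are non-attacking) gives a closed expression for $|\atk(P)|$ in terms of the $a_k$ and $b_k$ and the shape of $\mF$; the point is that this quantity is maximized by making each rook attack as large a hook as possible, which — because $\mF$ is top-left aligned and in canonical form — is achieved by pushing the rooks into the top rows and as far right as the diagram allows in each such row. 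I would make this precise via an exchange argument: starting from any $\inv$-minimizing placement $P$, if it does not already occupy rows $1,\dots,r$, I would slide a rook upward into an unoccupied smaller-indexed row (possible since $\mF$ is top-left aligned, so the relevant cell exists) and argue $|\atk|$ does not decrease; similarly, within a fixed row I would slide a rook rightward to the rightmost cell of that row, again without decreasing $|\atk|$. Finitely many such moves reach the claimed placement $P$.

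Concretely the steps, in order, are: (1) record $\inv(\mF,P)=|\mF|-|\atk(P)|$ and reduce to maximizing $|\atk(P)|$; (2) verify that for $\mF$ in canonical form the rightmost cells of rows $1,\dots,r$ form an element of $\NAR(\mF,r)$, using top-alignment of diagonals and the strict decrease of rightmost-column index down the rows; (3) prove the "slide up" lemma: replacing a rook in row $a$ by the corresponding cell in a vacant row $a'<a$ (in the same relative position guaranteed by top-left alignment) never decreases $|\atk|$; (4) prove the "slide right" lemma: within a fixed row, moving a rook to the rightmost available cell never decreases $|\atk|$ (since it enlarges the row-portion of the hook and the column-portion only shrinks by cells that were already crossed out by rooks below, which in the optimal configuration there are none); (5) combine (3) and (4) in an exchange/monotonicity argument to conclude that the claimed $P$ attains the maximum of $|\atk|$, hence the minimum of $\inv$, i.e.\ $\inv(P)=\tau(R_r(\mF;q))$.

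The main obstacle I anticipate is step (4) and, relatedly, the bookkeeping in the exchange argument: when two rooks share a column region or when sliding one rook changes which cells are "newly" crossed out by another, the naive count of $|\atk(P)|$ double-counts the overlap cells, and one must track these overlaps carefully to be sure the slide is genuinely non-decreasing rather than merely non-decreasing up to a correction term. I expect this can be handled cleanly by processing the rooks from the bottom-most to the top-most, so that each slide only interacts with already-fixed rooks in a controlled way, but making that ordering argument airtight is where the real work lies. The paper sidesteps the full details by invoking "it directly follows from the definition of canonical form," which suggests the intended proof is the short direct verification along these lines rather than a heavy computation.
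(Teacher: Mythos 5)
Your overall framing is the right one: reduce to maximizing $|\atk(P)|$ via $\inv(\mF,P)=|\mF|-|\atk(P)|$, check that the advertised placement is a legitimate element of $\NAR(\mF,r)$, and then argue no placement does better. Step (2) is essentially correct once cleaned up: in canonical form the rightmost cell of row $a$ is $(a,j_a-a+1)$ with $j_a=\max\{j:|\diag_j\cap\mF|\ge a\}$, and since $j_{a'}\le j_a$ whenever $a'>a$, these columns are strictly decreasing down the rows, so the cells are pairwise non-attacking (your aside about the cells lying on "distinct diagonals" is a red herring: distinct diagonals do not imply non-attacking). One then computes directly that this placement crosses out exactly the top $r$ rows of $\mF$, so its $\inv$ equals $\sum_i\max\{0,|\diag_i\cap\mF|-r\}$, consistent with Theorem~\ref{thm:trail}.

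The genuine gap is in the optimality half, and specifically your step (3) is false as stated. Moving a rook from $(a,b)$ to the cell $(a',b)$ directly above it in a vacant row $a'<a$ (the cell whose existence "top-left alignment" guarantees) shrinks its hook by $a-a'$ cells, and the change in overlaps with the other hooks need not compensate: already for $r=1$ and the canonical diagram $\mF=[2,1]$, the rook at $(2,1)$ attacks $2$ cells while the rook at $(1,1)$ attacks only $1$, so the slide-up strictly \emph{decreases} $|\atk|$. (The same example shows the minimizer of $\inv$ is not unique, since $(2,1)$ and $(1,2)$ both attain it, so any exchange argument must tolerate $|\atk|$-preserving moves and still be shown to terminate at the claimed placement.) The move that actually works is "up \emph{and} right", i.e.\ to the rightmost cell of the target row; but that target column may already be occupied by another rook, and the inversion bookkeeping you defer to step (5) is precisely where the content of the lemma lies. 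Your step (4) (sliding right within a row to an unoccupied column of $\mF$) is correct and even strictly increases $|\atk|$, but on its own it only shows that in an optimal placement every cell to the right of a rook lies in an occupied column; it does not push the rooks into the top rows. As written, the proposal verifies that the advertised placement attains the value $\sum_{k\le r}c_k$, but it does not prove the needed inequality $|\atk(P)|=\sum_k(a_k+b_k-1)-\#\{(k,l):a_k<a_l,\ b_k>b_l\}\le\sum_{k\le r}c_k$ for an arbitrary $P\in\NAR(\mF,r)$; that bound still requires an argument.
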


This immediately implies the following result from~\cite{gruica2022rook}.

\begin{theorem}[{\cite[Theorem~2.15]{gruica2022rook}}]
\label{thm:trail}
Let $\mF$ be a Ferrers diagram in $\N\times\N$. For any $1 \le r \le \partial(\mF)$ we have
    \begin{equation*}
        \tau(R_r(\mF;q))=\sum_{i\in\N}\max\{0,|\diag_i\cap\mF-r|\}.
    \end{equation*}
\end{theorem}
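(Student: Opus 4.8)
The plan is to reduce to the canonical form and then invoke the preceding lemma. First I would observe that by Corollary~\ref{cor:surprise} (equivalently, Theorem~\ref{thm:bijection}), the $q$-rook polynomial $R_r(\mF;q)$ depends only on the diagonal cardinalities $d_i := |\diag_i \cap \mF|$; in particular $R_r(\mF;q) = R_r(\overline{\mF};q)$, where $\overline{\mF}$ is the canonical form of $\mF$. Since the right-hand side of the claimed formula also depends only on the $d_i$, it suffices to prove the statement when $\mF = \overline{\mF}$ is in canonical form.

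So assume $\mF$ is in canonical form and fix $1 \le r \le \partial(\mF)$. By Proposition~\ref{prop:dFdiag} the condition $r \le \partial(\mF)$ guarantees that $\diag_i \cap \mF$ has at least $r$ cells for some $i$, hence there exists at least one row of $\mF$ containing a cell in position $\ge r$ from the left — in fact, because $\mF$ is in canonical form, the first $r$ rows all have length $\ge r$ (the $i$-th diagonal with $|\diag_i\cap\mF| = d_i$ contributes one cell to each of rows $1,\dots,d_i$), so the placement $P_0$ described in the lemma (a rook in the rightmost cell of each of the $r$ topmost rows) is a genuine non-attacking rook placement of size $r$, and $R_r(\mF;q) \ne 0$. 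By the lemma, $\tau(R_r(\mF;q)) = \inv(\mF, P_0)$, so it remains to compute $\inv(\mF, P_0) = |\mF| - |\atk(P_0)|$, or equivalently count directly the cells of $\mF$ not crossed out by $P_0$.

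The combinatorial core is this count, which I expect to be the main (though not difficult) step. When we place a rook in the rightmost cell of each of rows $1, \dots, r$, the cells crossed out are exactly those cells of $\mF$ lying in one of rows $1, \dots, r$, together with those lying weakly to the left of some such rook — but since each rook sits at the right end of its row and the diagram is top-left aligned, crossing out ``row $k$ and everything left of the rook in row $k$'' simply crosses out all of row $k$ for $k = 1, \dots, r$. Hence the surviving cells are precisely the cells of $\mF$ in rows $r+1, r+2, \dots$, i.e. the cells $(a,b) \in \mF$ with $a > r$. Now I count these by diagonals: the $i$-th diagonal meets rows $1, 2, \dots, d_i$ of the canonical diagram, so it contributes $\max\{0, d_i - r\}$ cells with row index exceeding $r$. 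Summing over $i$ gives $\inv(\mF, P_0) = \sum_{i \in \N} \max\{0, |\diag_i \cap \mF| - r\}$, which is the asserted formula. The only points requiring a line of care are: (i) checking that $P_0$ as defined actually is realizable (the rooks in distinct rows land in distinct columns, because in the canonical form row $k$ has length equal to $|\{i : d_i \ge k\}|$, and these lengths are strictly decreasing in $k$ up to $r$, so the rightmost cells occupy distinct columns), and (ii) confirming via the lemma that no other placement achieves a strictly smaller $\inv$. Both are immediate once the canonical-form structure is unwound, so the proof is short.
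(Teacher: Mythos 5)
Your proposal is correct and follows exactly the route the paper intends: reduce to the canonical form via Theorem~\ref{thm:bijection}, apply the preceding lemma to identify the minimizing placement $P_0$, and count the surviving cells diagonal by diagonal (the paper simply asserts the theorem ``immediately'' follows, and your write-up supplies precisely the omitted details, including the check that the rightmost cells of the top $r$ rows lie in distinct columns).
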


\begin{remark}
In~\cite{gruica2022rook}, it was also shown that the trailing degree of a $q$-rook polynomial is closely related with an open conjecture about the largest dimension of a space of matrices supported on a Ferrers diagram in which all nonzero matrices have rank at least~$d$; see~\cite{etzion2009error} for the problem and~\cite[Theorem 3.6]{gruica2022rook}. Since the number of rank $d$ matrices supported on a Ferrers diagram is the same 
for all diagrams in the same equivalence class, it is natural to ask if the same property holds for spaces of matrices as well. The answer is in general negative.
\end{remark}

\section{Symmetric Ferrers Diagrams} \label{sec:symandalt}\label{sec:4}

In this section 
we develop the theory of Ferrers diagrams, with a focus on their diagonals,
for alternating and symmetric matrices.
To this end, 
we introduce new definitions of $q$-rook polynomials of \textit{symmetric} Ferrers diagrams. These will allow us to establish the analogues of the results of the previous sections for the alternating and symmetric case.

Recall that a matrix $M \in \operatorname{Mat}_q^{n \times n}$ is \textbf{symmetric} if $M_{ij} = M_{ji}$ for all $i,j \in \{1, \dots, n\}$ and 
\textbf{alternating} if $M_{ii}=0$ and $M_{ij} = -M_{ji}$ for all $i,j \in \{1, \dots, n\}$. 
We denote the space of symmetric and alternating matrices by $\matsym$ and
$\matalt$, respectively.
We denote by $\Xi=\{(i,i) \in \N \times \N\}$ the \textbf{principal diagonal} of $\N \times \N$.

We already observed that a matrix of rank $r$ can be associated to a placement of non-attacking rooks in $\N\times\N$; see also~\cite{haglund1998q}. If we restrict to symmetric and alternating matrices, such a placement can only be of a certain form. This motivates the following definition.

\begin{definition}
    A Ferrers diagram $\mF$ is said to be \textbf{symmetric} if $(i,j) \in \mF$ implies $(j,i) \in \mF$.
    For $r\in\N_0$, we let
\begin{equation*}
        \NAR^\alt(\mF,r)=\{P \in \NAR(\mF,r) : (i,j) \in P \mbox{ implies }  i \ne j \mbox{ and } (j,i) \in P\}.
\end{equation*}
Clearly, if $r$ is odd then $\NAR^\alt(\mF,r) = \emptyset$. For any $t,s\in\N_0$, we let
\begin{multline*}
    \NAR^\sym(\mF,t,s) \\ =\{P\in\NAR(\mF,2t+s): P=T\cup S \textup{ with } T\in\NAR^\alt(\mF,2t), \, S\subseteq \Xi, \textup{ and } |S|=s\}.
\end{multline*}
We call placements in $\NAR^\alt(\mF,r)$ \textbf{alternating} and placements in ${\NAR^\sym(\mF,t,s)}$ \textbf{symmetric}.
\end{definition}

\begin{example}
    Let $\mF=[5,3,2,1,1]$ be a Ferrers diagram and consider the placements of non-attacking rooks defined as follows:
    \begin{align*}
        S:=&\{(\diag_1\cap\mF)_1,(\diag_4\cap\mF)_2,(\diag_4\cap\mF)_3\},\\
        T:=&\{(\diag_4\cap\mF)_2,(\diag_4\cap\mF)_3,(\diag_5\cap\mF)_1,(\diag_5\cap\mF)_2\}.
    \end{align*}
    As one can see from Figures~\ref{fig:sym} and~\ref{fig:alt}, $\mF$ is symmetric and $S$ and $T$ are a symmetric and an alternating placement, respectively. 
    \begin{figure}[h]
    \begin{minipage}{0.49\textwidth}
        \centering
        \begin{tikzpicture}[scale=0.6, line width=1pt]
        \draw (0,0) grid (5,-5);
         \foreach \n in {0,...,4} {\draw[draw=black, fill =Goldenrod!55] (0,-\n) rectangle (1,-1-\n);}
         \foreach \n in {0,...,2} {\draw[draw=black, fill =Goldenrod!55] (1,-\n) rectangle (2,-1-\n);}
         \foreach \n in {0,...,1} {\draw[draw=black, fill =Goldenrod!55] (2,-\n) rectangle (3,-1-\n);}
         \foreach \n in {0,...,0} {\draw[draw=black, fill =Goldenrod!55] (3,-\n) rectangle (4,-1-\n);}
        \foreach \n in {0,...,0} {\draw[draw=black, fill =Goldenrod!55] (4,-\n) rectangle (5,-1-\n);}
        \node at (.5,-.5) {\color{black}\rook};
        \node at (1.5,-2.5) {\color{black}\rook};
        \node at (2.5,-1.5) {\color{black}\rook};
    \end{tikzpicture}
    \captionof{figure}{$\mF$ with the placement $S$.}
    \label{fig:sym}
    \end{minipage}\hfill
    \begin{minipage}{0.49\textwidth}
        \centering
        \begin{tikzpicture}[scale=0.6, line width=1pt]
        \draw (0,0) grid (5,-5);
         \foreach \n in {0,...,4} {\draw[draw=black, fill =Goldenrod!55] (0,-\n) rectangle (1,-1-\n);}
         \foreach \n in {0,...,2} {\draw[draw=black, fill =Goldenrod!55] (1,-\n) rectangle (2,-1-\n);}
         \foreach \n in {0,...,1} {\draw[draw=black, fill =Goldenrod!55] (2,-\n) rectangle (3,-1-\n);}
         \foreach \n in {0,...,0} {\draw[draw=black, fill =Goldenrod!55] (3,-\n) rectangle (4,-1-\n);}
        \foreach \n in {0,...,0} {\draw[draw=black, fill =Goldenrod!55] (4,-\n) rectangle (5,-1-\n);}
        \node at (.5,-4.5) {\color{black}\rook};
         \node at (4.5,-.5) {\color{black}\rook};
        \node at (1.5,-2.5) {\color{black}\rook};
        \node at (2.5,-1.5) {\color{black}\rook};
    \end{tikzpicture}
    \captionof{figure}{$\mF$ with the placement $T$.}
    \label{fig:alt}
    \end{minipage}
    \end{figure}
\end{example}

Ferrers diagrams in connection with alternating matrices have been considered before by Haglund and Remmel in \cite{haglund2001rook}. There the authors consider different $q$-rook polynomials associated to a special finite sub-board of $\{(i,j)\in \N\times\N: j>i\}$, called a \textit{shifted} Ferrers diagram, which is a finite subset $\mG\subseteq\{(i,j)\in \{1, \dots, n\} \times \{1, \dots, n\}: j>i\}$ with $(1,2)\in\mG$ and the property that if $(i,j)\in\mG$ then $(s,t)\in\mG$ for all $s\in\{1,\ldots,i\}$ and $t\in\{1,\ldots,j\}$ with~$t > s$. In this paper we take a different approach, which can however be easily related to the one of~\cite{haglund2001rook}. Note that, to the best of our knowledge, the theory of symmetric matrices has not been considered before.

We start by relating the degree of a Ferrers diagram with the ranks of symmetric and alternating matrices supported on it.

\begin{lemma}\label{lem:partialF}
    Let $\mF \subseteq \{1, \ldots, n\} \times \{1, \ldots, n\}$ be a symmetric Ferrers diagram. The following hold.
    \begin{enumerate}
        \item $\partial(\mF)=\max\{\rk(M) : M \in \matsym[\mF]\}$.
        \item If $\partial(\mF)$ is even, then $\partial(\mF)=\max\{\rk(M) : M \in \matalt[\mF]\}$.
    \end{enumerate}
\end{lemma}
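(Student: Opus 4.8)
The plan is to prove both statements by exhibiting, for the value $r := \partial(\mF)$, an explicit matrix of rank $r$ supported on $\mF$ that is symmetric (resp.\ alternating), and to get the reverse inequality for free from Proposition~\ref{prop:partialFmaxrk}, since $\matsym[\mF], \matalt[\mF] \subseteq \mat[\mF]$ and so no such matrix can have rank exceeding $\partial(\mF)$. So the whole content is the construction of witnesses. I would first invoke Proposition~\ref{prop:dFdiag} to fix a diagonal $\diag_i$ with $|\diag_i \cap \mF| = r$; since $\mF$ is symmetric, $\diag_i \cap \mF$ is stable under the transposition $(a,b) \mapsto (b,a)$, because $(a,b)$ and $(b,a)$ lie on the same anti-diagonal $\diag_{a+b-1}$. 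This symmetry of the chosen diagonal is the structural fact that makes everything work.

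For part~(1), the natural first attempt is to take $M$ with $M_{ab}=1$ for $(a,b)\in\diag_i\cap\mF$ and $0$ elsewhere, exactly as in the proof of Proposition~\ref{prop:dFdiag}; by the symmetry of $\diag_i\cap\mF$ this $M$ is automatically symmetric, it is supported on $\mF$, and it has rank $r$ (its nonzero entries occupy $r$ distinct rows and $r$ distinct columns, so it is a permutation-type pattern up to scaling). The only subtlety is that over $\F_2$ a cell $(a,a)$ on the principal diagonal contributes to both "$M_{ab}=1$" constraints harmlessly, so no sign issue arises; the construction is uniform in $q$. Hence part~(1) is essentially immediate once the symmetry of the chosen diagonal is noted.

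For part~(2), assume $r=\partial(\mF)$ is even. Here the naive all-ones matrix on $\diag_i\cap\mF$ fails to be alternating: it is symmetric, not skew, and it may hit the principal diagonal $\Xi$. The fix is to pair up the cells of $\diag_i\cap\mF$ under transposition. The expected obstacle — and the one step that needs care — is showing that $\diag_i\cap\mF$ contains \emph{no} cell of $\Xi$, i.e.\ that when $r$ is even the extremal diagonal can be taken to be one with $i$ even (so $(a,b)$ with $a+b-1=i$ forces $a\ne b$), and that its $r$ cells split into $r/2$ transposition-pairs. I would argue this using Remark~\ref{rem:ferrseq}: the sequence $(|\diag_1\cap\mF|,|\diag_2\cap\mF|,\dots)$ is a Ferrers sequence with maximum $r=\partial(\mF)$, and for a \emph{symmetric} diagram the cell on $\diag_i$ belonging to $\Xi$ exists iff $i$ is odd and $|\diag_i\cap\mF|$ is odd; combined with the fact that $|\diag_i\cap\mF|=r$ is even, one can locate an even $i$ (or argue directly that the extremal diagonal has no fixed point of the transposition) — alternatively, note the anti-diagonal $\diag_i$ meets $\Xi$ in a cell only when $i$ is odd, and then $|\diag_i\cap\mF|$ has the opposite parity of the number of off-$\Xi$ cells, which must be even, forcing $|\diag_i\cap\mF|$ odd, a contradiction with $|\diag_i\cap\mF|=r$ even. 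Once $\diag_i\cap\mF$ is fixed-point-free, define $M$ by $M_{ab}=1$, $M_{ba}=-1$ for each pair $(a,b)$ with $a<b$ in $\diag_i\cap\mF$ (and $0$ elsewhere); this is alternating by construction, supported on $\mF$ by symmetry of $\mF$, and has rank exactly $r$ since its pattern is a product of $r/2$ disjoint $2\times2$ alternating blocks (after simultaneous row/column permutation), each of rank $2$. Combining with the upper bound from Proposition~\ref{prop:partialFmaxrk} finishes the proof.
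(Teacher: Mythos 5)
Your proof is correct and follows essentially the same route as the paper's: the upper bound comes from $\matsym[\mF],\matalt[\mF]\subseteq\mat[\mF]$ together with Proposition~\ref{prop:partialFmaxrk}, and the lower bound from a witness matrix supported on an extremal anti-diagonal, which is transposition-stable because $\mF$ is symmetric. The paper dismisses part~(2) with ``shown similarly''; your parity argument showing that the extremal diagonal avoids $\Xi$ when $\partial(\mF)$ is even is exactly the detail needed there, and it is correct.
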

\begin{proof}
    Recall that by Proposition~\ref{prop:partialFmaxrk}, Proposition~\ref{prop:dFdiag}, and \cite[Claim~A]{gruica2022rook}, we have
    \begin{equation*}
        \partial(\mF)=\max\{\rk(M) : M \in \mat[\mF]\}=\max\{|\diag_i\cap\mF|:i\in\N\}=|\diag_{\partial(\mF)}\cap\mF|.
    \end{equation*}
     Observe that $\partial(\mF)\geq\max\{\rk(M) : M \in \matsym[\mF]\}$ since $\matsym[\mF]\subseteq\mat[\mF]$. On the other hand, define $M\in\mat$ by $M_{ij}=1$ if $(i,j)\in\diag_{\partial(\mF)}\cap\mF$ and $M_{ij}=0$ otherwise. Then $M\in\matsym[\mF]$ and $\rk(M)=\partial(\mF)$, which implies the first part of the statement. The second part can be shown similarly.
\end{proof}

We introduce the following $q$-rook polynomials for alternating placements of non-attacking rooks.

\begin{definition}
Let $\mF$ be a symmetric Ferrers diagram and let $r \in \Z$. The \textbf{alternating $r$-th $q$-rook polynomial} of $\mF$ is
\begin{align*}
R_r^\alt(\mF;q) :=\sum_{P \in \NAR^\alt(\mF,r)} q^{\inv(\mF,P)+r/2-|\Xi\cap\mF|},
\end{align*}
where we set $R_r^\alt(\mF;q)=0$ if $r<0$ or $\NAR^\alt(\mF,r)=\emptyset$, and $R_0(\mF;q)=q^{|\mF|-|\Xi\cap\mF|}$.
\end{definition}

Let $\mF\subseteq\{1, \dots, n\} \times \{1, \dots, n\}$ be a symmetric Ferrers diagram and $r\in\N$. It is easy to see that there exists a placement of non-attacking rooks of size $r$ on $\mF$ if and only if there exists a matrix of rank $r$ in $\matalt[\mF]$. Moreover, the following result holds. Since the proof is similar to the one of~\cite[Theorem 18]{haglund2001rook}, we omit it here.

\begin{theorem}\label{thm:altqrooken}
    Let $\mF\subseteq\{1,\ldots,n\}\times\{1,\ldots,n\}$ be a symmetric Ferrers diagram and let~$r\in\Z$. Then 
    \begin{equation*}
        W_r(\matalt[\mF]) = (q-1)^{r/2} q^{(|\mF|-|\Xi\cap\mF|-r)/2} R_{r}^\alt(\mF;q^{-1/2}).
    \end{equation*}
\end{theorem}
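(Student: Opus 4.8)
The plan is to mimic the classical Haglund-style argument that already underlies Theorem~\ref{thm:hag}, but carried out in the ``alternating'' setting where every rook off the principal diagonal comes with its mirror image. First I would set up the bijective bookkeeping: for a matrix $M \in \matalt[\mF]$ of rank $r$ (necessarily $r$ even), I want to track which cells of $\mF$ are ``leading ones'' of a symmetric row-echelon type reduction that respects the alternating structure, so that the positions of these leading entries determine an alternating placement $P \in \NAR^\alt(\mF,r)$. The key point is that, because $M$ is alternating and supported on the symmetric diagram $\mF$, the pivot structure is symmetric: pivots come in mirror pairs $(i,j),(j,i)$ with $i\neq j$, so the number of ``degrees of freedom'' of $M$, once the placement $P$ is fixed, is governed by $\inv(\mF,P)$ together with a correction accounting for the cells on $\Xi$ (which are forced to be zero) and the fact that each alternating pair only contributes ``half'' a free scaling parameter.

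Concretely, the main computation I would do is: fix $P \in \NAR^\alt(\mF,r)$ and count the matrices $M \in \matalt[\mF]$ whose associated placement is exactly $P$. I expect this count to be $(q-1)^{r/2} q^{(|\mF| - |\Xi\cap\mF|)/2 - (\text{something involving }|\atk(P)|)}$, where the exponent of $q$ collapses to $(|\mF| - |\Xi\cap\mF| - r)/2 + \tfrac12\inv(\mF,P) - \text{const}$ after using the identity $|\atk(P)| = |\mF| - \inv(\mF,P)$ noted right after Notation~\ref{not:atk}, and after observing that on a symmetric diagram with a symmetric placement the attacked cells also split symmetrically with respect to $\Xi$. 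Summing over all $P \in \NAR^\alt(\mF,r)$ then gives
\begin{equation*}
W_r(\matalt[\mF]) = \sum_{P \in \NAR^\alt(\mF,r)} (q-1)^{r/2} q^{(|\mF|-|\Xi\cap\mF|-r)/2} \, q^{\,\tfrac12(\inv(\mF,P)+r/2-|\Xi\cap\mF|)} \cdot q^{\,\tfrac12|\Xi\cap\mF|},
\end{equation*}
and after carefully matching the exponents against the definition of $R_r^\alt(\mF;q)$ one recognizes the right-hand side as $(q-1)^{r/2} q^{(|\mF|-|\Xi\cap\mF|-r)/2} R_r^\alt(\mF;q^{-1/2})$, because substituting $q \mapsto q^{-1/2}$ into $q^{\inv(\mF,P)+r/2-|\Xi\cap\mF|}$ produces exactly the half-integer powers that appear in the per-placement count. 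The cases $r<0$, $r$ odd, and $r=0$ are handled separately and are immediate from the definitions (and from the fact, noted in the text, that $\NAR^\alt(\mF,r)=\emptyset$ for $r$ odd).

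The cleanest way to organize all of this — and the route I would actually take rather than redoing the linear-algebra bijection from scratch — is to reduce to the known result \cite[Theorem~18]{haglund2001rook}. That theorem is stated for shifted Ferrers diagrams $\mG \subseteq \{(i,j): j>i\}$ and alternating matrices whose support lies strictly above the diagonal; our symmetric Ferrers diagram $\mF$ determines such a shifted diagram by $\mG = \mF \cap \{(i,j): j>i\}$, and the alternating matrices supported on $\mF$ are in obvious bijection with those supported on $\mG$ (the lower triangle being the mirror, the diagonal being zero). So the plan's spine is: (1) pass from $\mF$ to the associated shifted diagram $\mG$; (2) translate our $\NAR^\alt(\mF,r)$ to the rook placements on $\mG$ that \cite{haglund2001rook} uses, checking the statistic matches up to the shift $+r/2 - |\Xi\cap\mF|$ built into our normalization; (3) invoke their enumeration formula; (4) rewrite $|\mG|$ in terms of $|\mF|$ and $|\Xi\cap\mF|$ via $2|\mG| = |\mF| - |\Xi\cap\mF|$, and reconcile the $q$ versus $q^{-1/2}$ substitution.

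The main obstacle I anticipate is purely bookkeeping: getting the normalization constant exactly right. The definition of $R_r^\alt$ carries the built-in shift $q^{\inv(\mF,P)+r/2-|\Xi\cap\mF|}$, and \cite{haglund2001rook} uses a different (but equivalent) statistic on shifted diagrams; so the delicate step is verifying that, cell-by-cell, $\inv(\mF,P)$ restricted to the strictly-upper-triangular part of $\mF$ together with the $+r/2$ term reproduces their inversion statistic, and that the $-|\Xi\cap\mF|$ exactly compensates for the $|\Xi\cap\mF|$ forced-zero diagonal cells of $\mF$ that are not present in $\mG$. Once the dictionary between the two statistics is pinned down, the substitution $q \mapsto q^{-1/2}$ and the power of $(q-1)$ follow mechanically, and the theorem drops out; this is presumably why the authors say the proof is ``similar to the one of~\cite[Theorem 18]{haglund2001rook}'' and omit it.
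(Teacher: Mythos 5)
Your overall strategy is the one the paper intends: the authors omit this proof precisely because it is the Haglund--Remmel argument (\cite[Theorem~18]{haglund2001rook}) transplanted to symmetric diagrams, and the fully written-out analogue for symmetric matrices (Theorem~\ref{thm:symcount}) is exactly your route (1) --- a symmetry-preserving Gaussian elimination followed by a per-placement count. Two bookkeeping corrections, though. First, your displayed sum is not the correct identity: substituting $q\mapsto q^{-1/2}$ into $q^{\inv(\mF,P)+r/2-|\Xi\cap\mF|}$ produces $q^{-\frac12(\inv(\mF,P)+r/2-|\Xi\cap\mF|)}$ (note the minus sign), and there is no extra factor $q^{\frac12|\Xi\cap\mF|}$; the identity you actually need to verify is
\begin{equation*}
W_r(\matalt[\mF]) \;=\; \sum_{P\in\NAR^\alt(\mF,r)} (q-1)^{r/2}\, q^{(|\mF|-|\Xi\cap\mF|-r)/2}\, q^{-\frac12\left(\inv(\mF,P)+r/2-|\Xi\cap\mF|\right)},
\end{equation*}
whose exponent of $q$ simplifies to $\frac12\bigl(|\mF|-\inv(\mF,P)-3r/2\bigr)$. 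Second, the clean way to see that this is the number of free upper-triangular parameters is to isolate the fact that $|\atk(P)\cap\Xi|=r/2$ for every $P\in\NAR^\alt(\mF,r)$: each mirror pair $\{(i,j),(j,i)\}$ with $i<j$ attacks exactly one diagonal cell, namely $(i,i)$, and these cells are distinct across pairs. Then the number of non-rook attacked cells strictly above the diagonal is $\bigl(|\atk(P)|-r/2\bigr)/2-r/2=\bigl(|\mF|-\inv(\mF,P)-3r/2\bigr)/2$, matching the exponent above once you use $|\atk(P)|=|\mF|-\inv(\mF,P)$. Your phrasing that ``$-|\Xi\cap\mF|$ compensates for the forced-zero diagonal cells'' conflates $|\Xi\cap\mF|$ (which simply cancels between the prefactor and the statistic) with $|\atk(P)\cap\Xi|=r/2$ (which is what actually enters the count). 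Neither issue affects the viability of the argument; both would surface and get fixed the moment you carried out the exponent match you deferred.
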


Symmetric matrices are somewhat more complicated than alternating matrices.
We introduce a $q$-rook polynomial for symmetric placements of non-attacking rooks, with the goal
of connecting it to the problem of counting symmetric matrices of given rank supported on a symmetric Ferrers diagram.
In particular, we wish to derive 
a recursive formula similar to the one in Theorem~\ref{thm:recursive}. To this end, we find it necessary to introduce a refinement of $q$-rook polynomials that takes into account the number of non-attacking rooks in a symmetric placement lying on the main diagonal.

\begin{definition}
Let $\mF\subseteq\{1, \dots, n\} \times \{1, \dots, n\}$ be a symmetric Ferrers diagram and let $t,s\in\Z$. The \textbf{symmetric $(t,s)$-th $q$-rook polynomial} associated to $\mF$ is
\begin{equation*}
    R_{t,s}^\sym(\mF;q)= \sum_{P \in \NAR^\sym(\mF,t,s)} q^{\inv(\mF,P)},
\end{equation*}
where we set $R_{t,s}^\sym(\mF;q)=0$ if $t<0$, $s<0$ or $\NAR^\sym(\mF,t,s) = \emptyset$, and $R_{0,0}^{\sym}(\mF;q)=q^{|\mF|}$. 
\end{definition}

    It is not hard to see that, for any $t\in\N_0$, $\NAR^\sym(\mF,t,0)=\NAR^\alt(\mF,2t)$ and therefore $ R_{t,0}^\sym(\mF;q)=q^{|\Xi\cap\mF|-t} R_{2t}^\alt(\mF;q)$.
Furthermore,
there exists a symmetric placement of non-attacking rooks
of size $2t+s$ on a symmetric Ferrers diagram $\mF$ if and only if there exists a matrix of rank $2t+s$ in $\matsym[\mF]$.

The next result is the analogue of~\cite[Theorem 18]{haglund2001rook} for symmetric matrices. It gives an enumeration formula for the number of symmetric matrices supported on a Ferrers diagram using their associated $q$-rook polynomials.

\begin{theorem} \label{thm:symcount}
Let $\mF \subseteq \{1, \dots, n\} \times \{1, \dots, n\}$ be a symmetric Ferrers diagram and let $r \in \Z$. Then
\begin{align} \label{eq:thmhagskew}
    W_r(\matsym[\mF]) = \sum_{\substack{t,s \ge 0 \\ 2t+s=r} } (q-1)^{t+s} q^{(|\mF|-t-s)/2} R_{t,s}^\sym(\mF;q^{-1/2}).
\end{align}
\end{theorem}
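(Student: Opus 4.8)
The plan is to mimic Haglund's original argument (\cite[Theorem~18]{haglund2001rook}, see also Theorem~\ref{thm:hag}) by counting pairs $(M, P)$ where $M \in \matsym[\mF]$ has rank $r$ and $P$ is a suitable ``combinatorial scaffold'' for $M$, and then evaluate this count in two ways. First I would fix $M \in \matsym[\mF]$ of rank $r$ and perform a symmetric Gaussian elimination / Schur-complement type reduction. Because $M$ is symmetric, at each step one either finds a nonzero diagonal pivot $M_{ii}$ (contributing a rook on $\Xi$) or, if the remaining matrix has zero diagonal on its support, one is forced to pick a nonzero off-diagonal $M_{ij}$, which because of symmetry comes paired with $M_{ji}$ and lets one clear a $2 \times 2$ block (contributing a symmetric pair of rooks $\{(i,j),(j,i)\}$, i.e.\ an ``alternating'' move). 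Iterating, one associates to $M$ a canonical $P = T \cup S$ with $T \in \NAR^\alt(\mF, 2t)$, $S \subseteq \Xi$, $|S| = s$, $2t+s = r$; the key point is that the support condition $\supp(M) \subseteq \mF$ forces $P \subseteq \mF$, so $P \in \NAR^\sym(\mF,t,s)$.

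The second step is to count, for a fixed placement $P = T \cup S \in \NAR^\sym(\mF,t,s)$, how many symmetric $M$ supported on $\mF$ reduce to $P$ under the above procedure. Running the elimination backwards: the entries of $M$ in cells that get ``crossed out'' by $P$ (there are $|\mF| - \inv(\mF,P)$ of them, but one must be careful to count cells in the symmetric diagram correctly, dividing by $2$ for off-diagonal pairs) are constrained, while the entries in the $\inv(\mF,P)$ non-crossed cells are free; and the pivots themselves must be nonzero. A careful bookkeeping — symmetric entries come in pairs off the diagonal and singletons on it — should yield exactly a factor $(q-1)^{t+s}$ from the pivots ($t$ alternating pivots plus $s$ diagonal pivots, each an invertible scalar or an invertible $2\times 2$ antidiagonal block contributing one $(q-1)$) times a factor $q^{(|\mF| - t - s)/2}$ accounting for the free symmetric entries among the crossed-out cells (whose number, after halving for symmetry, is $(|\mF| - |\Xi \cap \mF|)/2 - (\text{something}) + \dots$, the clean form being $q^{(|\mF|-t-s)/2} q^{-\inv(\mF,P)/2}$ when one also unpacks the free entries, matching $q^{(|\mF|-t-s)/2} R^\sym_{t,s}(\mF;q^{-1/2})$ after substituting the definition of $R^\sym_{t,s}$). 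Summing over $t,s \ge 0$ with $2t+s = r$ and over $P \in \NAR^\sym(\mF,t,s)$ gives the right-hand side of \eqref{eq:thmhagskew}.

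The main obstacle, which is where I would spend most of the effort, is the exact exponent bookkeeping in the symmetric setting: unlike the unrestricted case of Theorem~\ref{thm:hag}, here one must track which crossed-out cells lie on $\Xi$ versus off it, and which come in symmetric pairs, so that the count of ``constrained but not determined'' entries produces precisely the half-integer exponents $(|\mF|-t-s)/2$ and the $q^{-1/2}$ substitution rather than something off by a shift. The cleanest route is probably to prove it first in the special case $\NAR^\sym(\mF,t,0) = \NAR^\alt(\mF,2t)$ using Theorem~\ref{thm:altqrooken} and the identity $R^\sym_{t,0}(\mF;q) = q^{|\Xi\cap\mF|-t} R^\alt_{2t}(\mF;q)$ noted in the text as a consistency check, and then handle the general $s > 0$ case by a further induction: conditioning on whether a chosen pivot cell (say the lexicographically appropriate one) lies on the diagonal or not reduces $\mF$ to a smaller symmetric Ferrers diagram, letting one invoke the recursive structure. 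Alternatively — and perhaps more in the spirit of the rest of the paper — one could deduce the formula from a recursion analogous to Theorem~\ref{thm:recursive}, but since the theorem is stated without a diagonal hypothesis on $\mF$, the direct pair-counting bijection is the safer bet; I would remark, as the paper does for Theorem~\ref{thm:altqrooken}, that the argument parallels \cite[Theorem~18]{haglund2001rook} and give only the modifications needed to accommodate the diagonal rooks.
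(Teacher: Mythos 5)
Your proposal follows essentially the same route as the paper: a symmetry-preserving Gaussian elimination associating to each rank-$r$ matrix $M \in \matsym[\mF]$ a canonical placement $P = T \cup S \in \NAR^\sym(\mF,t,s)$, followed by counting the preimages of each $P$ as $(q-1)^{t+s}$ choices for the pivots times $q^{(|\mF|-t-s-\inv(\mF,P))/2}$ for the free entries, and summing over $P$. The paper pins down the canonical pivot as the lowest nonzero entry of the rightmost column (mirrored across $\Xi$ when off-diagonal), but otherwise the argument and the exponent bookkeeping you outline match the paper's proof.
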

\begin{proof}
The result is trivial for $r<0$ and we assume $r\in\N_0$ in the remainder of the proof. Let $M \in \matsym[\mF]$ be a symmetric matrix of rank $r$. We denote by $M_i$ and $M^i$ the $i$-th row and the $i$-th column of $M$ respectively, where $i\in\{1,\ldots,n\}$. We perform a modified Gaussian elimination on $M$ which preserves the symmetry of $M$. This procedure consists of the following steps:
\begin{enumerate}
    \item Find the lowest nonzero entry $(i,j)$ of $M$ in the right-most column.
    \item Delete entries above $(i,j)$ by adding suitable multiple of $M_i$ to the corresponding row. Formally, we perform the elementary row operation $M_s\rightarrow M_{ij}M_s-M_{sj}M_i$ for all $s\in\{1,\ldots,i-1\}$.
    \item Delete entries to the left of $(i,j)$ by adding suitable multiple of $M^j$ to the corresponding columns. Formally, we perform the elementary column operation $M^s\rightarrow M_{ij}M^s-M_{is}M^j$ for all $s\in\{1,\ldots,j-1\}$.
    \item If $i \ne j$, then perform the same procedure to entry $(j,i)$ as well.
\end{enumerate}
It is not difficult to check that the resulting matrix is symmetric. We iterate this procedure until there are only $r$ nonzero entries left. These entries form a placement of $r$ non-attacking rooks on $\mF$. Note that any matrix of rank $r$ in $\matsym[\mF]$ generates a unique symmetric placement of non-attacking rooks $P \in \NAR(\mF,r)$ with $|\{(i,j) \in P : i \ne j\}|=2t$ and $|\{(i,j) \in P : i = j\}|=s$ for some $t,s \ge 0$ with $2t+s=r$. In particular, for such a rook placement $P$, we have $(q-1)$ choices for each rook in $\{(i,j) \in P : i < j\}$ and $(q-1)$ choices for each rook in $\{(i,j) \in P : i = j\}$, which gives a total of $(q-1)^{t+s}$ choices for the cells containing a rook. The number of the cells that are deleted by rooks
when computing the $\inv$ statistics is $(|\mF|-\inv(\mF,P))/2+(t+s)/2-t-s$, taking into account that the matrix is symmetric and that we do not count the cells containing a rook themselves. This gives a total of $(q-1)^{t+s} q^{(|\mF|-t-s-\inv(\mF,P))/2}$ matrices with $P$ as underlying rook placement. The statement follows by summing over all possible symmetric placements of non-attacking rooks. 
\end{proof}

\begin{remark}
Even if it is not immediate to see from the formula in Theorem~\ref{thm:symcount}, the number of symmetric matrices of a given rank supported on a symmetric Ferrers diagram $\mF \subseteq \{1,\dots,n\} \times \{1,\dots,n\}$ is a polynomial $q$ (where $q$ is the underlying field size). To see this, for $r \in \N_0$ we can rewrite~\eqref{eq:thmhagskew} as 
\begin{align*}
    W_r(\matsym[\mF]) &= \sum_{\substack{t,s \ge 0 \\ 2t+s=r} } (q-1)^{t+s} q^{(|\mF|-t-s)/2} \sum_{P \in \NAR^\sym(\mF,t,s)} q^{-\inv(\mF,P)/2} \\
    &= \sum_{\substack{t,s \ge 0 \\ 2t+s=r} } (q-1)^{t+s} \sum_{P \in \NAR^\sym(\mF,t,s)} q^{(|\mF|-\inv(\mF,P))/2+(t+s)/2-t-s}.
\end{align*}
In order for the above expression to be a polynomial in $q$, it is enough to show that for any $t,s \ge 0$ with $2t+s=r$ and any $P \in \NAR^\sym(\mF,t,s)$, the number $$(|\mF|-\inv(\mF,P))/2+(t+s)/2-t-s$$ is a non-negative integer. This follows from the fact that $(|\mF|-\inv(\mF,P))/2+(t+s)/2  = \{(i,j) \in \atk(P) : i \le j \}$, which is not hard to see.

Note that also the number of alternating matrices of a certain rank supported on a Ferrers diagram is a polynomial in $q$. This can be explained in a similar manner, using the enumeration formula of Theorem~\ref{thm:altqrooken}.
\end{remark}

In the remainder of this section, we derive recursive formulas for the alternating and symmetric $q$-rook polynomials. These can be seen as the analogues of Theorem~\ref{thm:recursive} for symmetric Ferrers diagrams. We start by introducing a new version of the reduction operation. This is motivated by the fact that the reduction of a symmetric Ferrers diagram according to Definition~\ref{def:reduction} is, in general, not symmetric.

\begin{definition}\label{def:symred}
    Let $\mF$ be a symmetric Ferrers diagram, $i\in\N$ and $j\in\{1,\ldots,i\}$. The \textbf{symmetric reduction} $\Pi^\sym(\mF,i,j)$ of $\mF$ with respect to a symmetric placement $P=\{(\diag_i\cap\mF)_j,(\diag_i\cap\mF)_{i-j+1}\}$ is the Ferrers sub-diagram of $\mF$ obtained by removing the set
   \begin{equation*}
       \atk(P)\cup\{(\diag_i\cap\mF)_t:t\in\{j+1,\ldots,i-j\}\}\cup \{(\diag_s\cap\mF)_t:s>i, t\in\diag_s\cap\mF\}
   \end{equation*}
    from $\widehat\mF$ and aligning the remaining cells to the top and then to the left. 
\end{definition}

Note that in Definition~\ref{def:symred} we reduce with respect to \textit{two} elements. This is necessary for guaranteeing that the reduced diagram is again symmetric.
For example, let
$\mF:=[5,5,3,2,2]$ be a symmetric Ferrers diagram. The cells marked with ``\textcolor{red}{$\times$}'' in Figure~\ref{fig:symred} are the ones deleted when constructing the 
    reduction on $\mF$ with respect to the placement $P:=\{(\diag_5\cap\mF)_2,(\diag_5\cap\mF)_4\}$. The diagram $\Pi^\sym(\mF,5,2)$ resulting from this operation is shown in Figure~\ref{fig:symredresult}.
    \begin{figure}[h]
    \begin{minipage}{0.47\textwidth}
    \centering
    \begin{tikzpicture}[scale=0.6, line width=1pt]
     \draw (0,0) grid (5,-5);
    \foreach \n in {0,...,4} {\draw[draw=black, fill =Goldenrod!55] (0,-\n) rectangle (1,-1-\n);}
    \foreach \n in {0,...,4} {\draw[draw=black, fill =Goldenrod!55] (1,-\n) rectangle (2,-1-\n);}
    \foreach \n in {0,...,2} {\draw[draw=black, fill =Goldenrod!55] (2,-\n) rectangle (3,-1-\n);}
    \foreach \n in {0,...,1} {\draw[draw=black, fill =Goldenrod!55] (3,-\n) rectangle (4,-1-\n);}
    \foreach \n in {0,...,1} {\draw[draw=black, fill =Goldenrod!55] (4,-\n) rectangle (5,-1-\n);}
    \node at (3.5,-1.5) {\color{black}\rook};
    \node at (1.5,-3.5) {\color{black}\rook};
     \node at (4.5,-1.5) {$\color{Red}\times$};
     \node at (1.5,-4.5) {$\color{Red}\times$};
     \node at (2.5,-2.5) {$\color{Red}\times$};
    \node at (0.5,-3.5) {$\color{Red}\times$};
     \node at (1.5,-2.5) {$\color{Red}\times$};
    \node at (1.5,-1.5) {$\color{Red}\times$};
    \node at (1.5,-0.5) {$\color{Red}\times$};
    \node at (0.5,-1.5) {$\color{Red}\times$};
    \node at (2.5,-1.5) {$\color{Red}\times$};
    \node at (3.5,-0.5) {$\color{Red}\times$};
    \end{tikzpicture}
    \captionof{figure}{Cells of $\mF$ deleted by the symmetric reduction.}
    \label{fig:symred}
    \end{minipage}\hfill
    \begin{minipage}{0.47\textwidth}
    \centering
    \begin{tikzpicture}[scale=0.6, line width=1pt]
     \draw (0,0) grid (5,-5);
    \foreach \n in {0,...,2} {\draw[draw=black, fill =Goldenrod!55] (0,-\n) rectangle (1,-1-\n);}
    \foreach \n in {0,...,0} {\draw[draw=black, fill =Goldenrod!55] (1,-\n) rectangle (2,-1-\n);}
    \foreach \n in {0,...,0} {\draw[draw=black, fill =Goldenrod!55] (2,-\n) rectangle (3,-1-\n);}
    \end{tikzpicture}
    \captionof{figure}{The reduced Ferrers diagram $\Pi^\sym(\mF,5,2)$.}
    \label{fig:symredresult}
    \end{minipage}
    \end{figure}

The following is the analogue of Theorem~\ref{thm:bijection} for symmetric and alternating Ferrers diagrams with symmetric and alternating placements of non-attacking rooks. We omit the proof as it is similar to the one of Theorem~\ref{thm:bijection}.

\begin{theorem}
Let $\mF$ and $\mF'$ be diagonally equivalent symmetric Ferrers diagrams. For all non-negative integers $r,s,t,\ell$ we have  
\begin{align*}
    |\{P \in \NAR^\sym(\mF,t,s) : \inv(P) = \ell \}| &= |\{P  \in \NAR^\sym(\mF',t,s) : \inv(P ) = \ell \}|, \\
|\{P \in \NAR^\alt(\mF, r) : \inv(P) = \ell \}| &= |\{P  \in \NAR^\alt(\mF', r) : \inv(P ) = \ell \}|.
\end{align*} 
In particular, diagonally equivalent symmetric Ferrers diagrams have the same $r$-th symmetric and alternating $q$-rook  polynomial.
\end{theorem}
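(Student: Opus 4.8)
The plan is to imitate the induction on the number of rooks used in the proof of Theorem~\ref{thm:bijection}, but carried out with respect to the \emph{symmetric reduction} $\Pi^\sym$ of Definition~\ref{def:symred} instead of the ordinary reduction $\Pi$. The key structural fact I would isolate first is the analogue of Lemma~\ref{lem:punctdiageq}: if $\mF$ and $\mF'$ are diagonally equivalent symmetric Ferrers diagrams, then $\Pi^\sym(\mF,i,j)$ and $\Pi^\sym(\mF',i,j)$ are again diagonally equivalent. This should follow by the same ``aligning the diagonals to the top'' argument, since by construction $\Pi^\sym(\mF,i,j)$ is obtained by puncturing once, then deleting $\atk$ of a symmetric pair of rooks together with the cells of the $i$-th diagonal strictly between them; one checks that each of these deletions removes a controlled number of cells from each board-diagonal, depending only on the cardinalities $|\diag_s\cap\mF|$ and on $(i,j)$, not on the shape of $\mF$.

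For the alternating case I would induct on $t$ where $r=2t$. The base case $t=1$: an alternating placement of size $2$ is a symmetric pair $\{(\diag_i\cap\mF)_j,(\diag_i\cap\mF)_{i-j+1}\}$ lying on a single board-diagonal $\diag_i$, so the placements with a prescribed value of $\inv$ are counted by the number of such antipodal pairs on diagonal $\diag_\ell$ in $\mF$, which depends only on $|\diag_\ell\cap\mF|$; the bijection $(\diag_i\cap\mF)_j\mapsto(\diag_i\cap\mF')_j$ transports these, and by the symmetric-reduction analogue of Lemma~\ref{lem:punctdiageq} the reduced diagrams stay diagonally equivalent. For the inductive step, given $P\in\NAR^\alt(\mF,2t)$ with $\inv(P)=\ell$, peel off the lexicographically largest symmetric pair of rooks, apply the symmetric reduction to get $\Pi^\sym(\mF,i,j)$, use the induction hypothesis to match the remaining $2t-2$ rooks with a placement in $\Pi^\sym(\mF',i,j)$ (which is diagonally equivalent), and then reinsert the peeled-off pair in the cell of $\Pi^\sym(\mF',\ldots)$ corresponding to the original pair's position — exactly as in Theorem~\ref{thm:bijection}. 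The symmetric case is the same induction but on $2t+s$: one must peel off either the largest diagonal rook in $S\subseteq\Xi$ (using the ordinary reduction, since a single diagonal cell attacks a symmetric region) or the largest antipodal pair in $T$, in each case checking that the bookkeeping of which rooks were ``used'' is preserved and that the counts $(t,s)$ are respected.

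The main obstacle I anticipate is precisely the interaction between the two kinds of rooks in the symmetric case: a diagonal rook at $(a,a)$ and an off-diagonal symmetric pair attack overlapping regions, so when one peels off rooks one at a time the intermediate objects are not honestly symmetric placements on symmetric diagrams, and one has to be careful that the ``reduction with respect to a diagonal cell then realign'' operation produces a symmetric sub-diagram and that Lemma~\ref{lem:interreduc}'s analogue still holds. I would handle this by always peeling off the \emph{lexicographically maximal} rook or symmetric pair, so that the cells deleted by $\atk$ never interfere with the rooks that remain, and by proving a small lemma that the symmetric reduction of a symmetric diagram is symmetric — the statement of Definition~\ref{def:symred} essentially builds this in, but it needs to be verified. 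Everything else (the bijection, the equality of $\inv$ values, the preservation of diagonal equivalence) is a routine transcription of the proof of Theorem~\ref{thm:bijection}, which is why the authors omit it; my write-up would make explicit only the three new ingredients: symmetry-preservation of $\Pi^\sym$, the $\Pi^\sym$-analogue of Lemma~\ref{lem:punctdiageq}, and the case split in the peeling step.
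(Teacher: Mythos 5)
Your overall plan---rerun the induction of Theorem~\ref{thm:bijection} with the symmetric reduction in place of $\Pi$, after first proving a $\Pi^\sym$-analogue of Lemma~\ref{lem:punctdiageq}---is exactly what the paper intends (it omits the proof with precisely this remark), and the three ``new ingredients'' you isolate are the right ones. However, two of your peeling steps, as literally described, would fail. First, ``peel off the lexicographically largest symmetric pair'' is the wrong pair if it means the pair containing the lex-largest rook. The symmetric reduction $\Pi^\sym(\mF,i,j)$ deletes not only $\atk$ of the pair but also the cells of $\diag_i$ \emph{strictly between} its two rooks; so if two pairs of $P$ lie on the same board-diagonal, peeling the outer one (which contains the lex-largest rook, sitting at the largest position on $\diag_i$) wipes out the inner one. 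You must peel the \emph{innermost} pair on the last occupied diagonal, i.e.\ the pair whose upper-triangular representative is lexicographically largest; this is exactly what the recursion in Theorem~\ref{thm:qrookaltdiag} encodes, since the reduced last diagonal retains the $2j-2$ cells \emph{outside} the peeled pair. Your stated safeguard (``so that the cells deleted by $\atk$ never interfere with the rooks that remain'') does not catch this, because the interference comes from the between-cells, not from $\atk$.

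Second, in the symmetric case you propose to peel a diagonal rook ``using the ordinary reduction, since a single diagonal cell attacks a symmetric region.'' This fails: $\Pi(\mF,i,j)$ also deletes every cell lexicographically after $(\diag_i\cap\mF)_j$, and for the central cell of $\diag_i$ that includes the entire lower-triangular half of $\diag_i$---which destroys the lower rook of any off-diagonal pair of $P$ on the same diagonal (such coexistence is possible, e.g.\ $(1,5)$, $(5,1)$, $(3,3)$ all lie on $\diag_5$ and form a non-attacking symmetric placement). What is needed is a third reduction, the one implicit in the middle term of Theorem~\ref{thm:qrooksymdiag}: it sends $(d_1,\dots,d_\ell)$ to $(d_2-1,\dots,d_\ell-1)$, keeping all of $\diag_\ell$ except its central cell, and it requires its own version of the Claim inside Lemma~\ref{lem:punctdiageq}, since the hook of a cell $(a,a)$ meets some diagonals twice and others not at all, so the realignment bookkeeping is genuinely different from the one-cell-per-diagonal computation you are importing. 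With these two corrections---and the fixed peeling order: diagonal rook on the last occupied diagonal first if present, then innermost pairs working outward---the rest of your transcription of Theorem~\ref{thm:bijection} goes through.
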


It immediately follows that the symmetric and alternating $q$-rook polynomials of a symmetric Ferrers diagram $\mF$ are fully determined by the ordered set $\{|\diag_i\cap\mF|:i\in\N\}$.

Let  $\ell\in\N$ and  $d_1,\ldots,d_\ell\in\N$. Similarly to what is stated in Remark~\ref{rem:ferrseq}, one can see that a symmetric Ferrers diagram $\mF$ with the property $|\Delta_i\cap\mF|=d_i$, for all $i\in\{1,\ldots,\ell\}$, exists if and only if  
$(d_1,\ldots,d_\ell)$ is a Ferrers sequence and and there exist $r\in\{1,\ldots,\ell\}$ such that 
$d_i$ is even
whenever 
$i\in\{1,\ldots,r\}$ is even or when $i\in\{r+1,\ldots,\ell\}$.
We call a Ferrers sequence
$(d_1,\ldots,d_\ell)$ 
with this property \textbf{symmetric}. We denote by $\mS^\sym_\ell$ the set of symmetric Ferrers sequences of length $\ell$.

The following results are the analogues of Theorem~\ref{thm:recursive} for  alternating and symmetric $q$-rook polynomials. We omit the proofs since 
they are similar to the one of Theorem~\ref{thm:recursive}, where the symmetric reduction of Definition~\ref{def:symred} is used instead. We use the convention $R_{t,s}(0,d_1,\ldots,d_\ell)=R_{t,s}(d_1,\ldots,d_\ell)$ for non-negative integers $\ell$ and~$d_1,\ldots, d_\ell$.

\begin{theorem}\label{thm:qrookaltdiag}
Let $r\in\Z$, $\ell\in\N$, and let $(d_1,\ldots,d_\ell)\in\mS^\sym_\ell$ be a symmetric Ferrers sequence. Define $a_\ell=0$ if $d_\ell$ even and $a_\ell=1$ if $d_\ell$ odd. We have
\begin{multline*}
R_r^\alt(d_1,\ldots,d_\ell;q)=\\R_r^\alt(d_1,\ldots,d_{\ell-1};q)q^{d_\ell-a_\ell}+\sum_{j=1}^{\left\lfloor\frac{d_\ell}{2}\right\rfloor}R_{r-2}^\alt(d_3-2,\ldots,d_{\ell-1}-2,2j-2;q)q^{d_\ell-2j-a_\ell}
\end{multline*}
if $0\leq r\leq\partial(d_1,\ldots,d_\ell)$ is even and $ R_{r}^\alt(d_1,\ldots,d_\ell;q)=0$ otherwise.
\end{theorem}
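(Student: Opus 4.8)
The plan is to reduce the statement to a concrete symmetric Ferrers diagram $\mF$ with $|\diag_i\cap\mF|=d_i$ for $i\in\{1,\dots,\ell\}$ and $|\diag_i\cap\mF|=0$ otherwise, which is legitimate because the preceding (unnumbered) theorem asserts that $R_r^\alt$ depends only on the diagonal cardinalities of a symmetric Ferrers diagram, and because the symmetry of the sequence guarantees such an $\mF$ exists. The cases $r$ odd, $r<0$, and $r>\partial(d_1,\dots,d_\ell)$ are immediate from the definition of $R_r^\alt$ and $\NAR^\alt$, so the content is the recursion for even $0\le r\le \partial(d_1,\dots,d_\ell)$. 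The strategy mirrors the proof of Theorem~\ref{thm:recursive}: partition $\NAR^\alt(\mF,r)$ according to how a given alternating placement $P$ interacts with the last diagonal $\diag_\ell\cap\mF$, then express each block via $q$-rook polynomials of smaller symmetric diagrams obtained by puncturing and symmetric reduction.

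First I would fix notation for the last column/row. Writing $r_\ell=|\diag_\ell\cap\mF|=d_\ell$, an alternating placement $P\in\NAR^\alt(\mF,r)$ either (a) uses no cell of $\diag_\ell\cap\mF$, or (b) uses exactly two cells of $\diag_\ell\cap\mF$ — it cannot use exactly one, because an alternating placement contains $(i,j)$ iff it contains $(j,i)$, and a diagonal $\diag_\ell$ meets the principal diagonal $\Xi$ in at most one cell, which (since $d_\ell$ is even when $a_\ell=0$, and in the odd case the unique $\Xi$-cell of $\diag_\ell$ cannot be used by an alternating placement anyway) cannot be a single alternating rook. In case (a), $P$ lives entirely in the puncturing $\widehat\mF$, whose diagonals are $d_1,\dots,d_{\ell-1}$, and $\inv(\mF,P)=\inv(\widehat\mF,P)+(d_\ell-a_\ell)$ because removing $\diag_\ell\cap\mF$ strips off exactly $d_\ell-a_\ell$ cells that contribute to $\inv$ (the $a_\ell$ correction accounts for the $\Xi$-cell in the odd case, which sits outside the uncrossed region in the way the alternating statistic is normalized). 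This yields the term $R_r^\alt(d_1,\dots,d_{\ell-1};q)\,q^{d_\ell-a_\ell}$, being careful with the shift $r/2-|\Xi\cap\mF|$ in the definition of $R_r^\alt$ — I would check that $|\Xi\cap\widehat\mF|=|\Xi\cap\mF|-a_\ell$, so the exponents match up.

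For case (b), let $(\diag_\ell\cap\mF)_j$ be the topmost of the two chosen cells; its partner is forced to be $(\diag_\ell\cap\mF)_{d_\ell-j+1}=(\diag_\ell\cap\mF)_{\ell-j+1}$ by symmetry, and $j$ ranges over $\{1,\dots,\lfloor d_\ell/2\rfloor\}$. Removing this symmetric pair together with everything it attacks and everything lexicographically above it, then realigning, is exactly the symmetric reduction $\Pi^\sym(\mF,\ell,j)$ of Definition~\ref{def:symred}. By the symmetric analogue of Lemma~\ref{lem:interreduc} (which the paper invokes implicitly when defining the iterated symmetric reduction), $P$ minus this pair is an alternating placement of $r-2$ rooks on $\Pi^\sym(\mF,\ell,j)$, and conversely every such placement extends uniquely. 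I then need the diagonal cardinalities of $\Pi^\sym(\mF,\ell,j)$: by the same computation as in Lemma~\ref{lem:punctdiageq} (applied twice, once for each of the two rooks), the symmetric reduction strips two cells from each of diagonals $1,\dots,\ell-1$ — so diagonal $i$ becomes $d_{i+2}-2$ after reindexing — and truncates diagonal $\ell$ to $2j-2$, with everything beyond vanishing; this is exactly the argument sketch the authors give for the ordinary reduction. Finally I would track the exponent: the number of cells of $\diag_\ell\cap\mF$ strictly between the two chosen cells (and not attacked) is $d_\ell-2j$ for the pair, minus the $a_\ell$ correction, giving the weight $q^{d_\ell-2j-a_\ell}$, and the $\inv$ of the remaining placement on $\Pi^\sym(\mF,\ell,j)$ contributes $R_{r-2}^\alt(d_3-2,\dots,d_{\ell-1}-2,2j-2;q)$ after matching the $r/2-|\Xi\cap\cdot|$ shifts. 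Summing over $j$ gives the second term.

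The main obstacle I anticipate is bookkeeping the $a_\ell$ parity correction and the $|\Xi\cap\mF|$-shift in the exponent of $R_r^\alt$ simultaneously: one must verify that when $d_\ell$ is odd the unique principal-diagonal cell of the last diagonal is handled consistently by both the puncturing term and the reduction terms, so that the normalization $\inv(\mF,P)+r/2-|\Xi\cap\mF|$ is preserved under both operations. I would isolate this as a short lemma computing $|\Xi\cap\widehat\mF|$ and $|\Xi\cap\Pi^\sym(\mF,\ell,j)|$ in terms of $|\Xi\cap\mF|$ and $a_\ell$, and then the exponent arithmetic becomes routine. Everything else is a faithful transcription of the proof of Theorem~\ref{thm:recursive} with "reduction" replaced by "symmetric reduction" and "rook" replaced by "symmetric pair of alternating rooks".
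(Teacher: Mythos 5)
Your overall strategy is the intended one: the paper omits this proof precisely because it is meant to parallel Theorem~\ref{thm:recursive}, with the symmetric reduction of Definition~\ref{def:symred} replacing the ordinary reduction, and your handling of the trivial cases, of the puncturing term, and of the $a_\ell$ and $|\Xi\cap\cdot|$ bookkeeping is on the right track. However, there is a genuine gap in your case analysis. You assert that an alternating placement $P$ meets $\diag_\ell\cap\mF$ in either zero or exactly two cells. This is false: distinct cells of a single diagonal lie in distinct rows and distinct columns, so a non-attacking placement may contain arbitrarily many of them, and an alternating placement may contain any number of symmetric \emph{pairs} from $\diag_\ell$. Concretely, for $\mF=[4,3,2,1]$ (the sequence $(1,2,3,4)$) the unique placement in $\NAR^\alt(\mF,4)$ is $\{(1,4),(4,1),(2,3),(3,2)\}$, which uses all four cells of the last diagonal; it falls into neither of your cases, and your argument (which only rules out \emph{exactly one} cell, via the pairing and the principal diagonal) does not exclude it. Worse, if you try to force it into case (b) by peeling the pair containing the topmost chosen cell, i.e.\ the \emph{outermost} pair $\{(1,4),(4,1)\}$, the symmetric reduction deletes the between-cells of $\diag_\ell$, which here contain the remaining rooks $(2,3),(3,2)$ --- so $P$ minus the pair is not a placement on $\Pi^\sym(\mF,\ell,j)$ and the claimed bijection breaks.

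The correct decomposition peels off the \emph{innermost} symmetric pair of $P$ on $\diag_\ell$, say at indices $j$ and $d_\ell-j+1$ with $j$ maximal. Then the between-cells (indices $j+1,\dots,d_\ell-j$) contain no rooks of $P$ and are unattacked, contributing the factor $q^{d_\ell-2j-a_\ell}$ after the $\Xi$-normalization, while the $2j-2$ \emph{outer} cells of $\diag_\ell$ survive the symmetric reduction and become the new last diagonal of $\Pi^\sym(\mF,\ell,j)$, of cardinality $2j-2$. This is exactly why the last entry of the reduced sequence in the statement is $2j-2$: any further pairs of $P$ on $\diag_\ell$ sit on those outer cells and are absorbed recursively into $R_{r-2}^\alt(d_3-2,\dots,d_{\ell-1}-2,2j-2;q)$. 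With this correction (and noting, as a minor point, that the partner of $(\diag_\ell\cap\mF)_j$ is $(\diag_\ell\cap\mF)_{d_\ell-j+1}$, which coincides with the index $\ell-j+1$ of Definition~\ref{def:symred} only when $d_\ell=\ell$), the rest of your argument --- the reduction to a concrete diagram via diagonal equivalence, the computation of the diagonals of $\widehat\mF$ and of $\Pi^\sym(\mF,\ell,j)$, and the exponent lemma comparing $|\Xi\cap\widehat\mF|$ and $|\Xi\cap\Pi^\sym(\mF,\ell,j)|$ with $|\Xi\cap\mF|$ --- goes through as you describe.
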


\begin{theorem}\label{thm:qrooksymdiag}
Let $s,t\in\Z$, $\ell\in\N$ and let $(d_1,\ldots,d_\ell)\in\mS^\sym_\ell$ be a symmetric Ferrers sequence. Define $a_\ell=0$ if $d_\ell$ even and $a_\ell=1$ if $d_\ell$ odd. We have
\begin{multline*}
    R_{t,s}^\sym(d_1,\ldots,d_\ell;q)= R_{t,s}^\sym(d_1,\ldots,d_{\ell-1};q)q^{d_\ell}+a_\ell R_{t,s-1}^\sym(d_2-1,\ldots,d_{\ell}-1;q)\\
    +\sum_{j=1}^{\left\lfloor\frac{d_\ell}{2}\right\rfloor}R_{t-1,s}^\sym(d_3-2,\ldots,d_{\ell-1}-2,2j-2;q)q^{d_\ell-2j}
\end{multline*}
if $0\leq 2t+s\leq\partial(\mF)$ and $ R_{t,s}^\sym(d_1,\ldots,d_\ell;q)=0$ otherwise.
\end{theorem}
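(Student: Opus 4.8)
The statement is the symmetric analogue of Theorem~\ref{thm:recursive}, so the plan is to mirror that proof, using the symmetric reduction $\Pi^\sym$ of Definition~\ref{def:symred} in place of the ordinary reduction. First I would dispose of the boundary cases: $2t+s<0$ or $2t+s>\partial(\mF)$ forces $\NAR^\sym(\mF,t,s)=\emptyset$, so both sides vanish; this follows directly from Lemma~\ref{lem:partialF} and the definition of the symmetric $q$-rook polynomial. For the main range $0\le 2t+s\le\partial(d_1,\ldots,d_\ell)$, by the symmetric analogue of Theorem~\ref{thm:bijection} (stated just above) it suffices to prove the identity for one symmetric Ferrers diagram $\mF$ realizing the sequence, e.g.\ the one in canonical form with $|\diag_i\cap\mF|=d_i$ for $i\in\{1,\dots,\ell\}$ and empty diagonals beyond.

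Next I would partition $\NAR^\sym(\mF,t,s)$ according to how a symmetric placement $P=T\cup S$ (with $T\in\NAR^\alt(\mF,2t)$, $S\subseteq\Xi$, $|S|=s$) interacts with the last diagonal $\diag_\ell\cap\mF$. There are three cases. (1) $P$ uses no cell of $\diag_\ell\cap\mF$: then $P$ lies in the puncturing $\widehat\mF$, and each such $P$ contributes a factor $q^{d_\ell}$ to $\inv$ relative to its statistic in $\widehat\mF$ (since $|\diag_\ell\cap\mF|=d_\ell$ cells are simply inherited uncrossed), giving the term $R_{t,s}^\sym(d_1,\ldots,d_{\ell-1};q)\,q^{d_\ell}$. (2) $P$ has a rook on $\diag_\ell\cap\Xi$, i.e.\ on the unique cell $(\ell/2+\tfrac12,\ell/2+\tfrac12)$ if $d_\ell$ is odd (this is exactly when $a_\ell=1$; if $d_\ell$ is even there is no diagonal cell on $\diag_\ell$, consistent with $a_\ell=0$): removing that one symmetric rook and applying the analogue of Lemma~\ref{lem:interreduc} / the "greatest element" claim inside Lemma~\ref{lem:punctdiageq} shows the remaining placement lives on a symmetric diagram with diagonals $(d_2-1,\ldots,d_\ell-1)$, yielding $a_\ell\,R_{t,s-1}^\sym(d_2-1,\ldots,d_\ell-1;q)$ (and here the $\inv$-statistic is preserved by the choice of canonical form, so there is no extra power of $q$). (3) $P$ has a symmetric pair of off-diagonal rooks $\{(\diag_\ell\cap\mF)_j,(\diag_\ell\cap\mF)_{\ell-j+1}\}$ for some $j\in\{1,\dots,\lfloor d_\ell/2\rfloor\}$: apply $\Pi^\sym(\mF,\ell,j)$; by the diagonal-count computation recorded after Definition~\ref{def:symred} (analogue of item (ii) in Lemma~\ref{lem:punctdiageq}), the reduced diagram has diagonals $(d_3-2,\ldots,d_{\ell-1}-2,2j-2)$, the removed pair accounts for $2$ in the rook count (so we pass to $R_{t-1,s}$), and the cells of $\diag_\ell\cap\mF$ lying strictly between the two rooks (there are $d_\ell-2j$ of them) together with one extra column/row cancellation governed by $a_\ell$ contribute the factor $q^{d_\ell-2j}$.

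Summing the three cases gives exactly the claimed recursion. The one genuinely delicate point — and the step I expect to be the main obstacle — is the bookkeeping of the exponent of $q$ in case (3): one must verify that after performing $\Pi^\sym$, the change in $\inv(\mF,P)$ versus the statistic of the shortened placement on the reduced diagram is precisely $d_\ell-2j$ (for $d_\ell$ even) and that the odd case shifts everything consistently so that no $a_\ell$ appears in the third summand while it does appear cleanly in the second. This is the symmetric counterpart of the term $q^{d_\ell-j}$ in Theorem~\ref{thm:recursive}, but the symmetry forces us to track two rows and two columns of cancellations at once, plus the single principal-diagonal cell when $\ell$ is even-versus-odd, so the index arithmetic needs care. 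Everything else is a routine transcription of the proofs of Lemma~\ref{lem:interreduc}, Lemma~\ref{lem:punctdiageq}, and Theorem~\ref{thm:recursive}, replacing "delete the attack set of one rook" by "delete the attack set of a symmetric rook pair and the cells between them," which is exactly what Definition~\ref{def:symred} packages.
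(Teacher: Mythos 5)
Your overall strategy is exactly the one the paper intends: the paper omits this proof entirely, saying only that it is ``similar to the one of Theorem~\ref{thm:recursive}, where the symmetric reduction of Definition~\ref{def:symred} is used instead,'' and your three-term decomposition by the interaction of a symmetric placement with $\diag_\ell$ is the right way to flesh that out. The boundary cases and the first term are fine.

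There is, however, one genuine gap: your cases (1)--(3) do not partition $\NAR^\sym(\mF,t,s)$ as stated. A symmetric placement may carry a rook on the central cell of $\diag_\ell$ \emph{and} one or more symmetric pairs on $\diag_\ell$ (e.g.\ $P=\{(1,3),(2,2),(3,1)\}$ on $[3,2,1]$), or several pairs on $\diag_\ell$ at once; as written, such a $P$ falls into case (2) and into case (3) for every pair it contains, so the recursion would overcount. The correct partition is: (2) applies whenever the central cell of $\diag_\ell$ carries a rook, and the remaining pairs on $\diag_\ell$ are absorbed recursively because the operation for (2) keeps \emph{all} $d_\ell-1$ non-central cells of $\diag_\ell$ as the new last diagonal; (3) applies only when the central cell is rook-free and is indexed by the \emph{innermost} pair $\{j,d_\ell-j+1\}$, so that the $d_\ell-2j$ cells strictly between it (including the central cell when $a_\ell=1$) are guaranteed rook-free and uncrossed -- this is what produces the exponent $q^{d_\ell-2j}$ with no $a_\ell$ correction, and your ``one extra cancellation governed by $a_\ell$'' in case (3) is spurious. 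Relatedly, in case (2) you invoke the ``greatest element'' claim of Lemma~\ref{lem:punctdiageq}, but the ordinary reduction at the central cell would delete the lexicographically larger half of $\diag_\ell$, which must survive here; the operation you actually need is ``delete row $x$ and column $x$'' (where $(x,x)$ is the central cell), and one must check separately that realigning then yields the diagonal sequence $(d_2-1,\ldots,d_\ell-1)$. Finally, a small slip: you cannot take the representative in canonical form, since the canonical form of a symmetric Ferrers sequence is in general not a symmetric diagram (e.g.\ $(1,2,2)$) and $R^\sym_{t,s}$ is only defined for symmetric diagrams; pick any symmetric representative and invoke the preceding diagonal-equivalence theorem.
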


We conclude this section with 
the analogue of
Theorem~\ref{thm:trail} for alternating $q$-rook polynomials.

\begin{theorem}
     Let $\mF$ be a symmetric Ferrers diagram and let $r\in\N_0$. We have
     \begin{equation*}
         \tau(R^\alt_{2r}(\mF;q))=\sum_{i\in\N}(\max\{0,|\diag_{2i-1}\cap\mF|-2r-1\}+\max\{0,|\diag_{2i}\cap\mF|-2r\}).
     \end{equation*}
\end{theorem}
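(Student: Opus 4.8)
The strategy mirrors the proof of Theorem~\ref{thm:trail}, but now working with the alternating $q$-rook polynomial and its associated alternating placements. First I would reduce to the canonical form: by the symmetric analogue of Theorem~\ref{thm:bijection} (stated just above), diagonally equivalent symmetric Ferrers diagrams have the same alternating $q$-rook polynomial, and one checks that the canonical form $\overline{\mF}$ of a symmetric Ferrers diagram is again symmetric. So it suffices to prove the formula when $\mF$ is in canonical form, i.e.\ when every diagonal is flush against the top border of $\N\times\N$.

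\smallskip

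Next I would identify the alternating placement achieving the trailing degree. For $\mF$ in canonical form, recall from the lemma preceding Theorem~\ref{thm:trail} that the $\inv$-minimising placement of $2r$ \emph{ordinary} rooks puts a rook in the rightmost cell of each of the $2r$ topmost rows. For the alternating case the placement must be symmetric in the sense of $\NAR^\alt$: rooks come in pairs $(i,j),(j,i)$ with $i\ne j$. I claim the minimiser for $R^\alt_{2r}$ is obtained by taking, in each of the $2r$ topmost rows, the rook as far to the right as possible subject to symmetry — concretely, pairing up the topmost rows so that row $2i-1$ gets a rook on diagonal $\diag_{2i-1}$ (an odd diagonal, which in canonical form passes through a cell on the main diagonal $\Xi$, forcing the ``pair partner'' to coincide, but since $i\neq j$ is required we must instead use a genuinely off-diagonal pair) and the pair of rows producing an even-index diagonal. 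The bookkeeping here is exactly why the formula splits into odd diagonals $\diag_{2i-1}$ (contributing $\max\{0,|\diag_{2i-1}\cap\mF|-2r-1\}$) and even diagonals $\diag_{2i}$ (contributing $\max\{0,|\diag_{2i}\cap\mF|-2r\}$): a symmetric pair of rooks placed to sit on diagonal $2i$ attacks $2r$ cells of that diagonal counting the cells containing the rooks appropriately, whereas on an odd diagonal the symmetry constraint (the pair cannot land on $\Xi$) costs one extra uncancelled cell, giving the $-2r-1$.

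\smallskip

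The computation of $\inv$ for this candidate placement then proceeds diagonal by diagonal, using Remark~\ref{rem:power}: a cell of $\mF$ on diagonal $\diag_i$ survives the cancellation iff it is not attacked by any rook, and for the canonical-form diagram with the top-flush minimising placement, the surviving cells on diagonal $i$ are precisely those lying in rows below the $2r$ (resp.\ $2r+1$) topmost ones — hence the stated maxima. Summing over all $i$, splitting into even and odd $i$, yields the claimed trailing degree. One must also verify that no alternating placement can do strictly better, which follows from a convexity/exchange argument: any alternating placement of $2r$ rooks attacks at least as many cells on each diagonal as the canonical one, because pushing a rook-pair upward and rightward only ever decreases $\inv$, and the top-flush symmetric configuration is the unique such extreme.

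\smallskip

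\textbf{Main obstacle.} The delicate point is the asymmetry between odd and even diagonals in the formula. On an odd diagonal $\diag_{2i-1}$ of a canonical-form diagram, the ``central'' cell lies on $\Xi$; an alternating placement is forbidden from placing a rook there and from using a self-paired rook, so the optimal symmetric configuration necessarily ``wastes'' one position relative to the even case, which is precisely what produces the extra $-1$ in $\max\{0,|\diag_{2i-1}\cap\mF|-2r-1\}$ versus $\max\{0,|\diag_{2i}\cap\mF|-2r\}$. Making this count rigorous — i.e.\ showing that the optimal alternating placement of $2r$ rooks, after all the symmetry constraints, cancels exactly $\min\{2r+1,|\diag_{2i-1}\cap\mF|\}$ cells on each odd diagonal and $\min\{2r,|\diag_{2i}\cap\mF|\}$ on each even one, and that no placement does better — is the heart of the argument; everything else is bookkeeping parallel to the proof of Theorem~\ref{thm:trail} in~\cite{gruica2022rook}.
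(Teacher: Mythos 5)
The paper states this theorem without proof, so there is no argument of the authors' to compare against; judged on its own terms, your proposal has genuine gaps. The first is the reduction to canonical form: the canonical form of a symmetric Ferrers diagram is in general \emph{not} symmetric. For the full board $[3,3,3]$ the diagonal cardinalities are $(1,2,3,2,1)$ and the canonical form is $[3,2,2,1,1]$, which contains $(2,3)$ but not $(3,2)$. Since the symmetric analogue of Theorem~\ref{thm:bijection} requires \emph{both} diagrams to be symmetric, and $\NAR^\alt$ and $R^\alt_{2r}$ are only defined for symmetric diagrams, you cannot pass to $\overline{\mF}$; a symmetric representative of the equivalence class would have to be built differently (e.g.\ by centering each diagonal on $\Xi$), and your subsequent description of the optimal placement leans on the top-flush geometry of $\overline{\mF}$ that is no longer available.

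The second, more serious gap is the diagonal-by-diagonal accounting. You treat the right-hand side as the number of surviving (uncancelled) cells of an optimal alternating placement, counted per diagonal. But the exponent in $R^\alt_{2r}$ is $\inv(\mF,P)+r-|\Xi\cap\mF|$, not $\inv(\mF,P)$, so the claimed formula equals $\min_P \inv(\mF,P)+r-|\Xi\cap\mF|$ and is \emph{not} the count of surviving cells; moreover the per-diagonal terms do not match the per-diagonal surviving counts. Concretely, for $\mF$ the full $4\times 4$ board and $r=1$ the optimal placement is $P=\{(3,4),(4,3)\}$ with $|\atk(P)|=11$ and $\inv(\mF,P)=5$; the surviving cells are distributed over the diagonals as $(1,2,1,0,0,0,1)$, while the formula's terms are $(0,0,0,2,0,0,0)$, the totals agreeing only after subtracting $|\Xi\cap\mF|-r=3$. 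Your explanation of the odd/even asymmetry is also reversed: you say that avoiding $\Xi$ ``costs one extra uncancelled cell, giving the $-2r-1$'', but $\max\{0,d-2r-1\}$ allows one \emph{more} cell of an odd diagonal to be accounted for than $\max\{0,d-2r\}$ does, not one fewer. Finally, the optimal placement is never pinned down (the parenthetical about pair partners on $\Xi$ does not describe a configuration) and the exchange argument for optimality is only asserted. A route consistent with the rest of the paper would be induction on the symmetric Ferrers sequence via the recursion of Theorem~\ref{thm:qrookaltdiag}, tracking the trailing degree of each summand, rather than a direct extremal-placement argument.
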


\section{Symmetric and Alternating Matrices}

The goal of this section is to 
provide connections between enumerative results about alternating and symmetric matrices. While these two classes of matrices are different, there are curious 
connections between the number of such matrices with prescribed properties that are hard to explain with bijective proof; see e.g.~\cite{lewis2020rook}.
In this paper, we obtain these via rook theory.

\begin{notation} 
    Let $\mF$ be a symmetric Ferrers diagram associated with the symmetric Ferrers sequence $(d_1,\ldots,d_\ell)\in\N^\ell\cup\{(0,\ldots,0)\}$. Note that the intersection $\Xi\cap\mF$ only depends on the sequence $(d_1,\ldots,d_\ell)$. This is because $\mF$ is symmetric and therefore \begin{equation} \label{XiF}
        \Xi\cap \mF = \{(i,i) \in \N^2 \, : \,  d_i \textnormal{ is odd}\}.
    \end{equation} In the following, we sometimes write~$\Xi\cap(d_1,\ldots,d_\ell)$ instead of $\Xi\cap\mF$ in this case. 
\end{notation}

The main result of this section is the following theorem.

\begin{theorem}\label{thm:altsym}
Let $(d_1,\ldots,d_\ell)\in\N^\ell\cup\{(0,\ldots,0)\}$ be symmetric Ferrers sequence. If $r=\partial(d_1,\ldots,d_\ell)$ is even, then
\begin{equation*}
    q^{|\Xi\cap(d_1,\ldots,d_\ell)|} W_r(\matalt[d_1,\ldots,d_\ell]) =  W_r(\matsym[d_1,\ldots,d_\ell]).
\end{equation*}    
\end{theorem}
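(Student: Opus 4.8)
The plan is to prove the identity by comparing the enumeration formulas of Theorem~\ref{thm:altqrooken} and Theorem~\ref{thm:symcount}, and showing that in the top-rank case $r=\partial(d_1,\ldots,d_\ell)$ the symmetric count reduces to a single summand which matches the alternating count up to the factor $q^{|\Xi\cap\mF|}$. First I would fix a symmetric Ferrers diagram $\mF$ with diagonal cardinalities $(d_1,\ldots,d_\ell)$; by the diagonal-invariance of the symmetric and alternating $q$-rook polynomials (the analogue of Theorem~\ref{thm:bijection} for symmetric diagrams), both sides depend only on the sequence, so we may pick a convenient representative, e.g.\ the canonical form. Since $r$ is even, write $r=2t_0$ where $t_0=r/2$. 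The key combinatorial observation is that a symmetric placement $P\in\NAR^\sym(\mF,t,s)$ with $2t+s=r$ that is \emph{maximal} (i.e.\ $|P|=\partial(\mF)$) cannot use any cell of the principal diagonal $\Xi$: a rook on $(i,i)$ occupies both row $i$ and column $i$, and by Proposition~\ref{prop:partialFmaxrk}/Lemma~\ref{lem:partialF} together with the König–Egerváry argument, a maximum non-attacking placement on $\mF$ must meet every one of the $\partial(\mF)$ lines in a covering in a very restricted way; concretely, $|\diag_{\partial(\mF)}\cap\mF|=\partial(\mF)$ and every maximal placement lies on that single diagonal, which is disjoint from $\Xi$ when $\partial(\mF)$ is even (the principal diagonal meets $\diag_i$ only for odd $i=2k-1$ at $(k,k)$). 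Hence the only nonzero term in the sum of Theorem~\ref{thm:symcount} at $r=\partial(\mF)$ is the one with $s=0$, $t=t_0$, giving
\begin{align*}
W_r(\matsym[\mF]) = (q-1)^{t_0}\, q^{(|\mF|-t_0)/2}\, R_{t_0,0}^\sym(\mF;q^{-1/2}).
\end{align*}

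Next I would invoke the identity noted right after the definition of the symmetric $q$-rook polynomial, namely $R_{t,0}^\sym(\mF;q)=q^{|\Xi\cap\mF|-t}R_{2t}^\alt(\mF;q)$. Substituting $t=t_0$ and evaluating at $q^{-1/2}$ gives $R_{t_0,0}^\sym(\mF;q^{-1/2}) = q^{-(|\Xi\cap\mF|-t_0)/2}\,R_{r}^\alt(\mF;q^{-1/2})$. Plugging this into the displayed expression for $W_r(\matsym[\mF])$ and simplifying the powers of $q$ (collecting $q^{(|\mF|-t_0)/2}\cdot q^{-(|\Xi\cap\mF|-t_0)/2} = q^{(|\mF|-|\Xi\cap\mF|)/2}$, and noting $r/2=t_0$ so $(q-1)^{t_0}=(q-1)^{r/2}$) yields
\begin{align*}
W_r(\matsym[\mF]) = (q-1)^{r/2}\, q^{(|\mF|-|\Xi\cap\mF|)/2}\, R_r^\alt(\mF;q^{-1/2}).
\end{align*}
Comparing with Theorem~\ref{thm:altqrooken}, the right-hand side is exactly $q^{(|\Xi\cap\mF|)/2}\cdot(q-1)^{r/2}q^{(|\mF|-|\Xi\cap\mF|-r)/2}R_r^\alt(\mF;q^{-1/2}) = q^{|\Xi\cap\mF|/2}\cdot\big(q^{|\Xi\cap\mF|/2}W_r(\matalt[\mF])\big)$... so I would be careful with the exponents here: matching against Theorem~\ref{thm:altqrooken} directly shows $W_r(\matsym[\mF]) = q^{r/2}\cdot q^{?}\,W_r(\matalt[\mF])$, and a clean bookkeeping of the exponent of $q$ (the difference between $(|\mF|-|\Xi\cap\mF|)/2$ and $(|\mF|-|\Xi\cap\mF|-r)/2$ is $r/2$, which must be reconciled with the target exponent $|\Xi\cap\mF|$) is the place where the computation has to be done honestly. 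Since $\partial(\mF)$ even forces, on the canonical form, a specific relation among the $d_i$ and hence a specific value of $|\Xi\cap\mF|$, this is where the hypothesis is really used.

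\textbf{Main obstacle.} The delicate step is the claim that at $r=\partial(\mF)$ the symmetric sum collapses to the single $s=0$ term: this requires showing that \emph{every} maximum-size symmetric non-attacking placement avoids $\Xi$, which in turn rests on understanding the structure of maximum placements on a symmetric Ferrers diagram (they live on the unique full-length diagonal $\diag_{\partial(\mF)}$, which is $\Xi$-free precisely because $\partial(\mF)$ is even). I would prove this via the König–Egerváry covering characterization used in Proposition~\ref{prop:partialFmaxrk} and Proposition~\ref{prop:dFdiag}, possibly after passing to the canonical form where the unique maximal placement is explicit. The second, more mechanical obstacle is the exponent bookkeeping in passing between the $q^{-1/2}$-evaluations in Theorems~\ref{thm:altqrooken} and~\ref{thm:symcount} and the identity $R_{t,0}^\sym = q^{|\Xi\cap\mF|-t}R_{2t}^\alt$; this is routine but must be carried out carefully to land exactly on the factor $q^{|\Xi\cap\mF|}$.
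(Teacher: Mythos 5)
Your proposal rests on the claim that at maximal rank $r=\partial(\mF)$ every symmetric non-attacking placement of size $r$ avoids the principal diagonal $\Xi$, so that the sum in Theorem~\ref{thm:symcount} collapses to the single term $t=r/2$, $s=0$. This claim is false, and it is the central gap. Take $\mF=[2,2]$, i.e.\ the full $2\times 2$ board, with symmetric Ferrers sequence $(1,2,1)$ and $\partial(\mF)=2$ even. The placement $\{(1,1),(2,2)\}$ is a maximum symmetric placement lying entirely on $\Xi$ (it belongs to $\NAR^{\sym}(\mF,0,2)$), so the $s=2$ term contributes; concretely it accounts for the $(q-1)^2$ invertible diagonal matrices inside $W_2(\matsym[\mF])=q^2(q-1)$. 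Your collapse would instead give $W_r(\matsym[\mF])=q^{r/2}W_r(\matalt[\mF])=q(q-1)$, which is wrong, and indeed the exponent mismatch you flagged at the end ($q^{r/2}$ versus $q^{|\Xi\cap\mF|}$) is not reconcilable: $\partial(\mF)$ even does not force $|\Xi\cap\mF|=r/2$ (here $|\Xi\cap\mF|=2$ while $r/2=1$). The underlying structural assertion that every maximum placement lives on the single diagonal $\diag_{\partial(\mF)}$ is also false, as the same example shows.

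The correct statement is that \emph{all} terms $(t,s)$ with $2t+s=r$ contribute and their total happens to equal $q^{|\Xi\cap\mF|}$ times the alternating count --- which is exactly why no term-by-term or bijective argument is available. The paper instead proves the theorem by well-founded induction on the poset of symmetric Ferrers sequences, using the recursions for $W_r(\matalt[\cdot])$ and $W_r(\matsym[\cdot])$ (Propositions~\ref{Waltrec} and~\ref{Wsymrec}, themselves derived from Theorems~\ref{thm:qrookaltdiag} and~\ref{thm:qrooksymdiag}), splitting into the cases $d_\ell$ even (Lemma~\ref{lem:altsymeven}) and $d_\ell$ odd, the latter requiring the separate identity $W_r(\matalt[d_1,\ldots,d_\ell])=W_{r-1}(\matsym[d_2-1,\ldots,d_\ell-1])$ of Lemma~\ref{lem:dellodd}. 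Some argument of this recursive kind, rather than a direct comparison of the two enumeration formulas, is needed.
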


In order to prove this, we require some preliminaries results. The following proposition can be seen as a generalization of \cite[Proposition~3.8]{lewis2010matrices}.

\begin{proposition}\label{Waltrec}
    Let $r\geq 2$, $\ell\in\N$, and let $(d_1,\ldots,d_\ell)\in\N^\ell\cup\{(0,\ldots,0)\}$ be a symmetric Ferrers sequence. If $r$ is odd or $(d_1,\ldots,d_\ell)=(0,\ldots,0)$, then $W_r(\matalt[d_1,\ldots,d_\ell])=0$. Otherwise we have
    \begin{align*}
       W_r(\matalt[d_1,\ldots,d_\ell])&=W_r(\matalt[d_1,\ldots,d_{\ell-1}])\\&+(q-1)q^{\ell-2}\sum_{j=1}^{\left\lfloor\frac{d_\ell}{2}\right\rfloor}W_{r-2}(\matalt[d_3-2,\ldots,d_{\ell-1}-2,2j-2]).
    \end{align*}
\end{proposition}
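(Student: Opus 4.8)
The plan is to translate the recursive $q$-rook polynomial identity of Theorem~\ref{thm:qrookaltdiag} into a statement about the counting functions $W_r(\matalt[-])$ via the enumeration formula of Theorem~\ref{thm:altqrooken}. First I would dispatch the trivial cases: if $r$ is odd then $\NAR^\alt(\mF,r)=\emptyset$ so $R^\alt_r(\mF;q)=0$ and hence $W_r(\matalt[\mF])=0$; similarly if $(d_1,\ldots,d_\ell)=(0,\ldots,0)$ the diagram is empty and there is nothing to count. So assume $r\ge 2$ even and $(d_1,\ldots,d_\ell)$ a nonzero symmetric Ferrers sequence, and fix a symmetric Ferrers diagram $\mF$ realizing it, with $n$ chosen large enough; write $|\mF|$ and $|\Xi\cap\mF|$ for its size and diagonal size, both of which depend only on the sequence. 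Set $a_\ell=0$ if $d_\ell$ is even and $a_\ell=1$ if $d_\ell$ is odd.

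The core computation is to substitute $q\mapsto q^{-1/2}$ into the recursion of Theorem~\ref{thm:qrookaltdiag}, multiply through by the normalizing factor $(q-1)^{r/2}q^{(|\mF|-|\Xi\cap\mF|-r)/2}$ from Theorem~\ref{thm:altqrooken}, and match each term against the $W$-counting formula for the appropriate sub-sequence. The first term $R_r^\alt(d_1,\ldots,d_{\ell-1};q)q^{d_\ell-a_\ell}$ should, after normalization, produce exactly $W_r(\matalt[d_1,\ldots,d_{\ell-1}])$: the point is that deleting the last column changes $|\mF|$ by $d_\ell$ and $|\Xi\cap\mF|$ by $a_\ell$, and the factor $q^{d_\ell-a_\ell}$ is precisely what is needed to reconcile the two normalizations (note $(d_\ell-a_\ell)/2$ appears after the $q\mapsto q^{-1/2}$ substitution cancels against the exponent shift). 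For the sum, each summand $R_{r-2}^\alt(d_3-2,\ldots,d_{\ell-1}-2,2j-2;q)q^{d_\ell-2j-a_\ell}$ should normalize to $W_{r-2}(\matalt[d_3-2,\ldots,d_{\ell-1}-2,2j-2])$ up to an explicit power of $q$ and a $(q-1)$ factor; here one must carefully track that the sub-sequence has two fewer diagonals (indices shift by $2$) and that its size is $|\mF|-(\text{something involving }d_\ell,d_1,d_2,\ldots)$. The bookkeeping of exponents is the main obstacle: I expect the cleanest route is to compute, for the sub-diagram $\mF_j$ corresponding to $(d_3-2,\ldots,d_{\ell-1}-2,2j-2)$, the exact values $|\mF_j|$ and $|\Xi\cap\mF_j|$ in terms of the original data, verify $|\mF|-|\mF_j| = 2j + a_\ell + (\text{contribution from }d_1,d_2\text{-type columns})$ so that all leftover powers of $q$ collapse to the stated $q^{\ell-2}$, and check the $(q-1)$ exponents: the left side has $(q-1)^{r/2}$, the first term has $(q-1)^{r/2}$, and each summand of the sum has $(q-1)^{(r-2)/2}=(q-1)^{r/2-1}$, which accounts for the single extra factor $(q-1)$ in the claimed formula.

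Concretely, I would structure the proof as: (1) state the trivial cases; (2) recall Theorem~\ref{thm:altqrooken} written out as
\[
W_r(\matalt[\mF]) = (q-1)^{r/2} q^{(|\mF|-|\Xi\cap\mF|-r)/2} R_{r}^\alt(\mF;q^{-1/2});
\]
(3) apply Theorem~\ref{thm:qrookaltdiag} with $q$ replaced by $q^{1/2}$ throughout (so that $R_r^\alt(\mF;q^{-1/2})$ on the left expands in terms of $R^\alt_r(\cdot;q^{-1/2})$ and $R^\alt_{r-2}(\cdot;q^{-1/2})$ on the right); (4) multiply by the normalization and, term by term, identify the combinations with $W_r(\matalt[d_1,\ldots,d_{\ell-1}])$ and $W_{r-2}(\matalt[d_3-2,\ldots,d_{\ell-1}-2,2j-2])$, using the size identities $|\mF|$ versus $|\widehat\mF\setminus(\text{last column})|$ and $|\mF|$ versus $|\mF_j|$; (5) collect the residual power of $q$ and confirm it equals $q^{\ell-2}$ uniformly in $j$. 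The only genuinely delicate point is step (5) — verifying that the residual exponent is independent of $j$ and of the interior values $d_3,\ldots,d_{\ell-1}$ — and this follows because in the recursion each $R_{r-2}^\alt$ is normalized by a factor whose $|\mF_j|$-dependence exactly cancels the $|\mF_j|$ appearing in the target $W$-formula, leaving only the difference coming from the "staircase" prefix $d_1=1,d_2=2$ (or more precisely the even/odd pattern forced on a symmetric Ferrers sequence), which contributes the constant $\ell-2$.
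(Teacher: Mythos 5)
Your proposal follows essentially the same route as the paper's proof: substitute the recursion of Theorem~\ref{thm:qrookaltdiag} into the enumeration formula of Theorem~\ref{thm:altqrooken}, identify the first term with $W_r(\matalt[d_1,\ldots,d_{\ell-1}])$, and verify by explicit exponent bookkeeping (using $d_1=1$, $d_2=2$, $a_1=1$, $a_2=0$ and the parity-preservation of the shift $d_i\mapsto d_i-2$) that each summand normalizes to $(q-1)q^{\ell-2}W_{r-2}(\matalt[d_3-2,\ldots,d_{\ell-1}-2,2j-2])$. The plan is correct, including the $(q-1)$-exponent accounting and the observation that the residual power $q^{\ell-2}$ is independent of $j$ and of the interior $d_i$.
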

\begin{proof}
    The result is trivial for $r$ odd or $(d_1,\ldots,d_\ell)=(0,\ldots,0)$. We assume $r$ even and $(d_1,\ldots,d_\ell)\neq(0,\ldots,0)$ in the remainder of the proof. For every $1\leq i\leq \ell$,  define 
    \begin{equation*}
        a_i=\begin{cases}
            0 & \textup{ if } d_i \textup{ even},\\
            1 & \textup{ if } d_i \textup{ odd},
        \end{cases}\qquad\textup{ and }\qquad b_\ell=\left\lfloor\frac{d_\ell}{2}\right\rfloor.
    \end{equation*}
    Observe that if $b_\ell=0$ then $ \sum_{j=1}^{b_\ell}W_{r-2}(\matalt[d_3-2,\ldots,d_{\ell-1}-2,2j-2])=0$ by definition, since the sum runs over the empty set.
    By applying Theorem~\ref{thm:qrookaltdiag} to Theorem~\ref{thm:altqrooken} we get
    \begin{multline*}
        W_r(\matalt[d_1,\ldots,d_\ell]) \\=(q-1)^{r/2}q^{(\sum_{i=1}^\ell d_i-\sum_{i=1}^\ell a_i-r)/2 }\Big(R_r^\alt(d_1,\ldots,d_{\ell-1};q^{-1/2})q^{-(d_\ell-a_\ell)/2}\\ +\sum_{j=1}^{b_\ell}R_{r-2}^\alt(d_3-2,\ldots,d_{\ell-1}-2,2j-2;q^{-1/2})q^{-(d_\ell-2j-a_\ell)/2}\Big),
    \end{multline*}
    where we used the facts that $|\mF|=\sum_{i=1}^\ell d_i$ and  $|\Xi\cap\mF|=\sum_{i=1}^\ell a_i$; see~\eqref{XiF}. It is not hard to see that 
    \begin{align*}
         &(q-1)^{r/2}q^{(\sum_{i=1}^\ell d_i-\sum_{i=1}^\ell a_i-r)/2 }R_r^\alt(d_1,\ldots,d_{\ell-1};q^{-1/2})q^{-(d_\ell-a_\ell)/2}\\
         &=(q-1)^{r/2}q^{(\sum_{i=1}^{\ell-1} d_i-\sum_{i=1}^{\ell-1} a_i-r)/2 }R_r^\alt(d_1,\ldots,d_{\ell-1};q^{-1/2})\\
         &=W_r(\matalt[d_1,\ldots,d_{\ell-1}]).
    \end{align*}
    On the other hand, for any $1\leq j\leq b_\ell$, we have 
    \begin{multline}\label{eq:Wraltrec}
    (q-1)^{r/2}q^{(\sum_{i=1}^\ell d_i-\sum_{i=1}^\ell a_i-r)/2}\sum_{j=1}^{b_\ell}R_{r-2}^\alt(d_3-2,\ldots,d_{\ell-1}-2,2j-2;q^{-1/2})q^{-(d_\ell-2j-a_\ell)/2}\\
    =\sum_{j=1}^{b_\ell}(q-1)^{r/2}q^{(\sum_{i=1}^{\ell-1} d_i+2j-\sum_{i=1}^{\ell-1} a_i-r)/2}R_{r-2}^\alt(d_3-2,\ldots,d_{\ell-1}-2,2j-2;q^{-1/2}).
    \end{multline}
    Note that if $d_3\leq 1$ then this sum is $0$, and we can assume $d_3\in\{2,3\}$ in the remainder of the proof. Observe that if $d_3=2$ then the only possible symmetric Ferrers sequence is $(1,2,2,\ldots,2)$, $b_\ell=1$ and we have
    \begin{equation*}
        R_{r-2}^\alt(d_3-2,\ldots,d_{\ell-1}-2,2-2;q^{-1/2})=R_{r-2}^\alt(0,\ldots,0;q^{-1/2})
    \end{equation*}
    which is $1$ if $r=2$ and $0$ otherwise.
    Observe also that $d_3\in\{2,3\}$ implies $d_1=1$ and $d_2=2$ since the diagram is symmetric, from which $a_1=1$ and $a_2=0$. 
    Further note that we have
    \begin{align*}
        \sum_{i=1}^{\ell-1} d_i+2j-\sum_{i=1}^{\ell-1} a_i-r&=d_1+d_2+\sum_{i=3}^{\ell-1} (d_i-2)+\sum_{i=3}^{\ell-1}2+2j-2+2-a_1-a_2-\sum_{i=3}^{\ell-1} a_i-r\\
        &=3+\sum_{i=3}^{\ell-1} (d_i-2)+2(\ell-3)+(2j-2)-1-\sum_{i=3}^{\ell-1} a_i-(r-2)\\
        &=\sum_{i=3}^{\ell-1} (d_i-2)+(2j-2)-\sum_{i=3}^{\ell-1} a_i-(r-2)+2(\ell-2).
    \end{align*}
    Therefore, we can rewrite as follows
    \begin{align*}
        &(q-1)^{r/2}q^{(\sum_{i=1}^\ell d_i-\sum_{i=1}^\ell a_i-r)/2}R_{r-2}^\alt(d_3-2,\ldots,d_{\ell-1}-2,2j-2;q^{-1/2})q^{-(d_\ell-2j-a_\ell)/2}\\
        &=q^{\ell-2}(q-1)^{r/2}q^{(\sum_{i=3}^{\ell-1} (d_i-2)+(2j-2)-\sum_{i=3}^{\ell-1} a_i-(r-2))/2}R_{r-2}^\alt(d_3-2,\ldots,d_{\ell-1}-2,2j-2;q^{-1/2})\\
        &=(q-1)q^{\ell-2}(q-1)^{(r-2)/2}q^{(\sum_{i=3}^{\ell-1} (d_i-2)+(2j-2)-\sum_{i=3}^{\ell-1} a_i-(r-2))/2}\cdot \\
        &\quad \quad  R_{r-2}^\alt(d_3-2,\ldots,d_{\ell-1}-2,2j-2;q^{-1/2})\\
        &=(q-1)q^{\ell-2}W_{r-2}(\matalt[d_3-2,\ldots,d_{\ell-1}-2,2j-2]),
    \end{align*}
    which applied to~\eqref{eq:Wraltrec} concludes the proof.
\end{proof}

The next result generalizes~\cite[Lemma~4]{macwilliams1969orthogonal}.
Its proof combines
Theorem~\ref{thm:qrooksymdiag} 
with Theorem~\ref{thm:symcount}.

\begin{proposition}\label{Wsymrec}
      Let $r\geq 2$ and let $(d_1,\ldots,d_\ell)\in\mS^\sym_\ell$ be a symmetric Ferrers sequence.
      Define $a_\ell=0$ if $d_\ell$ even, and $a_\ell=1$ if $d_\ell$ odd. If $(d_1,\ldots,d_\ell)=(0,\ldots,0)$ then $W_r(\matsym[d_1,\ldots,d_\ell])=0$. Otherwise we have
     \begin{align*}
     W_r(\matsym[d_1,\ldots,d_\ell])&= W_r(\matsym[d_1,\ldots,d_{\ell-1}])\\&+a_\ell (q-1)q^{(\ell-1)/2} W_{r-1}(\matsym[d_2-1,\ldots,d_\ell-1])\\&+(q-1)q^{\ell-1}\sum_{j=1}^{\left\lfloor\frac{d_\ell}{2}\right\rfloor}W_{r-2}(\matsym[d_3-2,\ldots,d_{\ell-1}-2,2j-2]).
     \end{align*}
\end{proposition}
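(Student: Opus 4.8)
The plan is to mirror exactly the proof of Proposition~\ref{Waltrec}, but starting from the symmetric enumeration formula of Theorem~\ref{thm:symcount} together with the recursive identity for symmetric $q$-rook polynomials in Theorem~\ref{thm:qrooksymdiag}. The trivial case $(d_1,\dots,d_\ell)=(0,\dots,0)$ is immediate, so I assume the sequence is nonzero and $r\ge 2$. Write $|\mF|=\sum_{i=1}^\ell d_i$ and recall from~\eqref{XiF} that $|\Xi\cap\mF|=\sum_{i=1}^\ell a_i$ where $a_i$ is the parity indicator of $d_i$. The first step is to substitute $q^{-1/2}$ for $q$ in the three-term recursion of Theorem~\ref{thm:qrooksymdiag} and plug the resulting expression for $R_{t,s}^\sym(d_1,\dots,d_\ell;q^{-1/2})$ into the sum over $t,s\ge 0$ with $2t+s=r$ appearing in Theorem~\ref{thm:symcount}. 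This breaks $W_r(\matsym[d_1,\dots,d_\ell])$ into three pieces according to the three terms of the recursion; I then identify each piece.

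The first piece, coming from $R_{t,s}^\sym(d_1,\dots,d_{\ell-1};q)q^{d_\ell}$, should collapse to $W_r(\matsym[d_1,\dots,d_{\ell-1}])$ after the exponent bookkeeping: the factor $q^{d_\ell}$ becomes $q^{-d_\ell/2}$ and combines with $q^{(|\mF|-t-s)/2}$ to give $q^{(\sum_{i=1}^{\ell-1}d_i - t-s)/2}$, which is exactly the prefactor of Theorem~\ref{thm:symcount} for the diagram $[d_1,\dots,d_{\ell-1}]$. The second piece, coming from $a_\ell R_{t,s-1}^\sym(d_2-1,\dots,d_\ell-1;q)$, should after reindexing $s\mapsto s-1$ (so that $2t+(s-1)=r-1$) produce $a_\ell(q-1)q^{(\ell-1)/2}W_{r-1}(\matsym[d_2-1,\dots,d_\ell-1])$; here one pulls out one factor of $(q-1)$ from $(q-1)^{t+s}$ and must check the power of $q$ works out to $(\ell-1)/2$, which uses $\sum_{i=2}^\ell(d_i-1) = |\mF| - d_1 - (\ell-1) = |\mF| - \ell$ together with the parity/$|\Xi|$ shift (on the sub-diagram $[d_2-1,\dots,d_\ell-1]$ the principal-diagonal count changes in a controlled way). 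The third piece, coming from $\sum_{j=1}^{\lfloor d_\ell/2\rfloor}R_{t-1,s}^\sym(d_3-2,\dots,d_{\ell-1}-2,2j-2;q)q^{d_\ell-2j}$, is handled exactly as in Proposition~\ref{Waltrec}: reindex $t\mapsto t-1$, extract one $(q-1)$, and verify via the telescoping computation $\sum_{i=1}^\ell d_i + \cdots - r$ (using $d_1=1,d_2=2,d_3\in\{2,3\}$, hence $a_1=1,a_2=0$, forced by symmetry) that the leftover power of $q$ is $\ell-1$, yielding $(q-1)q^{\ell-1}\sum_{j=1}^{\lfloor d_\ell/2\rfloor}W_{r-2}(\matsym[d_3-2,\dots,d_{\ell-1}-2,2j-2])$.

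The main obstacle I expect is the exponent bookkeeping in the second (middle) term, which has no analogue in the alternating case: shifting from a symmetric diagram with diagonal sequence $(d_1,\dots,d_\ell)$ to $(d_2-1,\dots,d_\ell-1)$ changes the length, the total size $|\mF|$, \emph{and} the cardinality of the principal diagonal simultaneously, and one must confirm that the half-integer powers of $q$ recombine into the integer exponent $(\ell-1)/2$ with the correct power of $(q-1)$. The key sanity check is that $2t+s=r$ splits as $2t+(s-1)=r-1$ with $t+s = (t+(s-1))+1$, so exactly one factor $(q-1)$ is freed, matching the stated formula; and the parity constraint that $(d_1,\dots,d_\ell)\in\mS^\sym_\ell$ guarantees $d_i-1$ is a legal (symmetric) Ferrers sequence entry whenever the corresponding term is nonzero. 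Once these exponent identities are verified termwise, the three pieces add up to the claimed recursion, completing the proof.
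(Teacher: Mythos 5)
Your proposal is correct and follows essentially the same route as the paper: substitute the recursion of Theorem~\ref{thm:qrooksymdiag} (evaluated at $q^{-1/2}$) into the enumeration formula of Theorem~\ref{thm:symcount}, split into three pieces, reindex $s\mapsto s-1$ (resp.\ $t\mapsto t-1$) in the middle (resp.\ last) term, and verify the exponent identities exactly as you describe. One small remark: since Theorem~\ref{thm:symcount} uses $|\mF|$ rather than $|\mF|-|\Xi\cap\mF|$ in the prefactor, no $|\Xi|$-shift is actually needed in the middle term -- the identity $\sum_{i=2}^{\ell}(d_i-1)=|\mF|-\ell$ (using $d_1=1$) already yields the exponent $(\ell-1)/2$.
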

\begin{proof}
    We use the same notation as in the proof of Proposition~\ref{Waltrec}. The result is trivial for $(d_1,\ldots,d_\ell)=(0,\ldots,0)$ and we can assume $(d_1,\ldots,d_\ell)\neq(0,\ldots,0)$ in the remainder of the proof. By combining Theorem~\ref{thm:qrooksymdiag} and Theorem~\ref{thm:symcount}, we obtain
    \begin{align*}
W_r(\matalt[d_1,\ldots,d_\ell])&=\sum_{\substack{t,s \ge 0 \\ 2t+s=r} } (q-1)^{t+s} q^{(\sum_{i=1}^\ell d_i-t-s)/2}\Big(R_{t,s}^\sym(d_1,\ldots,d_{\ell-1};q^{-1/2})q^{-d_\ell/2}\\&+a_\ell R_{t,s-1}^\sym(d_2-1,\ldots,d_{\ell}-1;q^{-1/2})\\
        &+\sum_{j=1}^{b_\ell}R_{t-1,s}^\sym(d_3-2,\ldots,d_{\ell-1}-2,2j-2;q^{-1/2})q^{-(d_\ell-2j)/2}\Big).
    \end{align*}
    Notice that since $(d_1,\ldots,d_\ell)\neq(0,\ldots,0)$, we have $d_1=1$ and therefore
    \begin{align*}
        &\sum_{\substack{t,s \ge 0 \\ 2t+s=r} } (q-1)^{t+s} q^{(\sum_{i=1}^\ell d_i-t-s)/2}R_{t,s}^\sym(d_1,\ldots,d_{\ell-1};q^{-1/2})q^{-d_\ell/2}\\
        &=\sum_{\substack{t,s \ge 0 \\ 2t+s=r} } (q-1)^{t+s} q^{(\sum_{i=1}^{\ell-1} d_i-t-s)/2}R_{t,s}^\sym(d_1,\ldots,d_{\ell-1};q^{-1/2})\\
        &= W_r(\matsym[d_1,\ldots,d_{\ell-1}]).
    \end{align*}
    Similarly,
    \begin{align*}
        &\sum_{\substack{t,s \ge 0 \\ 2t+s=r} } (q-1)^{t+s} q^{(\sum_{i=1}^\ell d_i-t-s)/2}a_\ell R_{t,s-1}^\sym(d_2-1,\ldots,d_{\ell}-1;q^{-1/2})\\
        &=a_\ell(q-1)\sum_{\substack{t,s \ge 0 \\ 2t+s=r} }(q-1)^{t+(s-1)}q^{(d_1+\sum_{i=2}^\ell (d_i-1)+\sum_{i=2}^\ell 1 -t-s)/2}R_{t,s-1}^\sym(d_2-1,\ldots,d_{\ell}-1;q^{-1/2})\\
        &=a_\ell(q-1)\sum_{\substack{t,s \ge 0 \\ 2t+s=r} }(q-1)^{t+(s-1)}q^{(\sum_{i=2}^\ell (d_i-1)+(\ell-1) -t-(s-1))/2}R_{t,s-1}^\sym(d_2-1,\ldots,d_{\ell}-1;q^{-1/2})\\
        &=a_\ell(q-1)\sum_{\substack{t,s \ge 0 \\ 2t+s=r-1} }(q-1)^{t+s}q^{(\sum_{i=2}^\ell (d_i-1)+(\ell-1) -t-s)/2}R_{t,s}^\sym(d_2-1,\ldots,d_{\ell}-1;q^{-1/2})\\
        &=a_\ell(q-1)q^{(\ell-1)/2}W_{r-1}(\matsym[d_2-1,\ldots,d_\ell-1]).
    \end{align*}
    If $d_3\leq 1$, then $b_\ell=0$ and
    \begin{equation*}
        \sum_{j=1}^{b_\ell}R_{t-1,s}^\sym(d_3-2,\ldots,d_{\ell-1}-2,2j-2;q^{-1/2})q^{-(d_\ell-2j)/2}=0.
    \end{equation*}
    Therefore, we can assume $d_3\in\{2,3\}$ in the remainder of the proof. Moreover, similarly to what was observed in the proof of Proposition~\ref{Waltrec}, if $d_3=0$ then the only admissible symmetric Ferrers sequence is $(1,2,2,\ldots,2)$, $b_\ell=1$, and we have
    \begin{equation*}
        R_{t-1,s}^\sym(d_3-2,\ldots,d_{\ell-1}-2,2j-2;q^{-1/2})=R_{t-1,s}^\sym(0,\ldots,0;q^{-1/2}),
    \end{equation*}
    which is $1$ if and only if $t=1$ and $s=0$. Finally, note that $d_3\in\{2,3\}$ implies $d_1=1$ and $d_2=2$. Thus
    \allowdisplaybreaks
    \begin{align*}
        &\sum_{\substack{t,s \ge 0 \\ 2t+s=r} } (q-1)^{t+s} q^{(\sum_{i=1}^\ell d_i-t-s)/2}\sum_{j=1}^{b_\ell}R_{t-1,s}^\sym(d_3-2,\ldots,d_{\ell-1}-2,2j-2;q^{-1/2})q^{-(d_\ell-2j)/2}\\
        &=\sum_{j=1}^{b_\ell}(q-1)\sum_{\substack{t,s \ge 0 \\ 2t+s=r}}(q-1)^{(t-1)+s}q^{(d_1+d_2+\sum_{i=3}^{\ell-1} d_i+2j-t-s)/2} \\
        &\hspace{8cm}\cdot R_{t-1,s}^\sym(d_3-2,\ldots,d_{\ell-1}-2,2j-2;q^{-1/2})\\
        &=\sum_{j=1}^{b_\ell}(q-1)\sum_{\substack{t,s \ge 0 \\ 2t+s=r}}(q-1)^{(t-1)+s}q^{(\sum_{i=3}^{\ell-1} (d_i-2)+(2j-2)+2(\ell-1)-(t-1)-s)/2}\\ &\hspace{8cm}\cdot R_{t-1,s}^\sym(d_3-2,\ldots,d_{\ell-1}-2,2j-2;q^{-1/2})\\
        &=\sum_{j=1}^{b_\ell}(q-1)q^{\ell-1}\sum_{\substack{t,s \ge 0 \\ 2t+s=r}}(q-1)^{(t-1)+s}q^{(\sum_{i=3}^{\ell-1} (d_i-2)+(2j-2)-(t-1)-s)/2}\\ &\hspace{8cm} \cdot R_{t-1,s}^\sym(d_3-2,\ldots,d_{\ell-1}-2,2j-2;q^{-1/2})\\
        &=\sum_{j=1}^{b_\ell}(q-1)q^{\ell-1}\sum_{\substack{t,s \ge 0 \\ 2t+s=r-2}}(q-1)^{t+s}q^{(\sum_{i=3}^{\ell-1} (d_i-2)+(2j-2)-t-s)/2}\\
        &\hspace{8cm}\cdot R_{t,s}^\sym(d_3-2,\ldots,d_{\ell-1}-2,2j-2;q^{-1/2})\\
        &=\sum_{j=1}^{b_\ell}(q-1)q^{\ell-1}W_{r-2}(\matsym[d_3-2,\ldots,d_{\ell-1}-2,2j-2]),
    \end{align*}
    This concludes the proof.
\end{proof}

\begin{remark}\label{rem:Wsymevenodd}
    Let $(d_1,\ldots,d_\ell)$ be a symmetric Ferrers sequence and assume $d_\ell\geq 1$ odd, that is, $d_\ell-1\geq 0$ even. Then Proposition~\ref{Wsymrec} implies
         \begin{align*}
        W_r(\matsym[d_1,\ldots,d_\ell-1])&= W_r(\matsym[d_1,\ldots,d_{\ell-1}])\\&+(q-1)q^{\ell-1}\sum_{j=1}^{\left\lfloor\frac{d_\ell}{2}\right\rfloor}W_{r-2}(\matsym[d_3-2,\ldots,d_{\ell-1}-2,2j-2]).
     \end{align*}
     and therefore we have
          \begin{align*}
        W_r(\matsym[d_1,\ldots,d_\ell])&= W_r(\matsym[d_1,\ldots,d_{\ell-1}])\\&+a_\ell (q-1)q^{(\ell-1)/2} W_{r-1}(\matsym[d_2-1,\ldots,d_\ell-1])\\&+(q-1)q^{\ell-1}\sum_{j=1}^{\left\lfloor\frac{d_\ell}{2}\right\rfloor}W_{r-2}(\matsym[d_3-2,\ldots,d_{\ell-1}-2,2j-2])\\
        &=W_r(\matsym[d_1,\ldots,d_{\ell}-1])\\&+a_\ell (q-1)q^{(\ell-1)/2} W_{r-1}(\matsym[d_2-1,\ldots,d_\ell-1]).
     \end{align*}
     We will use this identity later in the proof of Theorem~\ref{thm:altsym}.
\end{remark}

The proof of
Theorem~\ref{thm:altsym}
relies on two more preparatory results.
We establish these
using an induction argument on posets,
sometimes called \textit{well-founded induction}.
We refer the reader to~\cite{takeuti2012introduction}, for instance.

\begin{definition}
   A poset $(S,\leq)$ 
   is \textbf{well-founded} if every descending chain of elements is finite, that is, there are no infinite descending chains in $S$ with respect to $\leq$.
\end{definition}

The well-founded induction principle
for a well-founded poset $(S,\leq)$
and a property $P$ defined on the elements of $S$:
\begin{equation*}
    ((P(y)\;\forall\; y \in S \mbox{ with } y \le x) \Longrightarrow P(x) \;\forall\; x\in S) \Longrightarrow P(x) \;\forall\; x\in S.
\end{equation*}
In words, if for all $x\in S$, whenever $P(y)$ holds for all $y\in S$ that \textit{precede} $x$ implies that $P(x)$ holds, then $P(x)$ holds for all $x\in S$.

One can check that, for any $\ell\in\N$, the set of 
symmetric Ferrers sequences $\mS_\ell^\sym$
with 
the product order $\leq_\sym$
is a well-founded poset.

\begin{remark}\label{rem:partial12}
    Let $\ell\in\N$ and let $(d_1,\ldots,d_\ell)\in\mS^\sym_\ell$ be a symmetric Ferrers sequence. It is not difficult to check that if $d_\ell\geq 1$, then $\partial(d_2-1,\ldots,d_\ell-1)\leq \partial(d_1,\ldots,d_\ell)-1$.
    Moreover, if $d_\ell\geq 2$, then $\partial(d_3-2,\ldots,d_\ell-2)\leq \partial(d_1,\ldots,d_\ell)-2$.
\end{remark}

We can now state and prove the last two preparatory lemmas.

\begin{lemma}\label{lem:altsymeven}
    Let $r\geq 2$ and $(d_1,\ldots,d_\ell)\in\mS_\ell^\sym$ be symmetric Ferrers sequence. If $r=\partial(d_1,\ldots,d_\ell)$  and $d_\ell$ are even, then
\begin{equation*}
    q^{|\Xi\cap(d_1,\ldots,d_\ell)|} \, W_r(\matalt[d_1,\ldots,d_\ell]) =  W_r(\matsym[d_1,\ldots,d_\ell]).
\end{equation*}    
\end{lemma}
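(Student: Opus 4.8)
The natural strategy is well-founded induction on $(d_1,\ldots,d_\ell)\in\mS_\ell^\sym$ with respect to $\leq_\sym$, matching the recursions of Proposition~\ref{Waltrec} and Remark~\ref{rem:Wsymevenodd} term by term. The base case is $(d_1,\ldots,d_\ell)=(0,\ldots,0)$, where both $W_r(\matalt[\cdot])$ and $W_r(\matsym[\cdot])$ vanish for $r\geq 2$, and also the case $\ell=1$ (i.e.\ $(d_1)=(1)$), where $\partial=1$ is odd so the hypothesis $r=\partial$ even is vacuous. For the inductive step, assume the identity holds for all symmetric Ferrers sequences strictly below $(d_1,\ldots,d_\ell)$, and suppose $r=\partial(d_1,\ldots,d_\ell)$ and $d_\ell$ are both even. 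Since $d_\ell$ is even, $a_\ell=0$, so Remark~\ref{rem:Wsymevenodd} gives simply
\begin{equation*}
    W_r(\matsym[d_1,\ldots,d_\ell]) = W_r(\matsym[d_1,\ldots,d_{\ell-1}]) + (q-1)q^{\ell-1}\sum_{j=1}^{\lfloor d_\ell/2\rfloor} W_{r-2}(\matsym[d_3-2,\ldots,d_{\ell-1}-2,2j-2]),
\end{equation*}
while Proposition~\ref{Waltrec} gives
\begin{equation*}
    W_r(\matalt[d_1,\ldots,d_\ell]) = W_r(\matalt[d_1,\ldots,d_{\ell-1}]) + (q-1)q^{\ell-2}\sum_{j=1}^{\lfloor d_\ell/2\rfloor} W_{r-2}(\matalt[d_3-2,\ldots,d_{\ell-1}-2,2j-2]).
\end{equation*}

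The plan is then to multiply the second identity by $q^{|\Xi\cap(d_1,\ldots,d_\ell)|}$ and compare term by term. For the first summand: since $d_\ell$ is even, $\Xi\cap(d_1,\ldots,d_\ell)=\Xi\cap(d_1,\ldots,d_{\ell-1})$ by~\eqref{XiF}, so it suffices to know $q^{|\Xi\cap(d_1,\ldots,d_{\ell-1})|}W_r(\matalt[d_1,\ldots,d_{\ell-1}])=W_r(\matsym[d_1,\ldots,d_{\ell-1}])$. Here a subtlety arises: $r=\partial(d_1,\ldots,d_\ell)$ need not equal $\partial(d_1,\ldots,d_{\ell-1})$. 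I would handle this by noting that since $r$ is even and both sides of the claimed identity are determined entirely by terms indexed by sequences $\leq_\sym (d_1,\ldots,d_{\ell-1})$, we may invoke a slightly strengthened inductive hypothesis — namely that the identity holds for the sequence $(d_1,\ldots,d_{\ell-1})$ whenever $r$ is even and $2\leq r\leq\partial(d_1,\ldots,d_{\ell-1})$, extended trivially when $r>\partial(d_1,\ldots,d_{\ell-1})$ since then both sides are $0$. Concretely, I expect to prove the lemma in the form "for all symmetric Ferrers sequences and all even $r$ with $r\geq 2$, if $r\geq\partial(d_1,\ldots,d_\ell)$ then $q^{|\Xi\cap(d_1,\ldots,d_\ell)|}W_r(\matalt[\cdot])=W_r(\matsym[\cdot])$," which is stable under the recursion.

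For the summand over $j$: I need $q^{|\Xi\cap(d_1,\ldots,d_\ell)|}\cdot q^{\ell-2}\,W_{r-2}(\matalt[d_3-2,\ldots,d_{\ell-1}-2,2j-2])$ to match $q^{\ell-1}\,W_{r-2}(\matsym[d_3-2,\ldots,d_{\ell-1}-2,2j-2])$. The exponent bookkeeping is the crux: since $d_\ell$ is even and $(d_1,\ldots,d_\ell)$ is symmetric, $d_3\in\{2,3\}$ forces $d_1=1,d_2=2$, and in general $\Xi\cap(d_1,\ldots,d_\ell)$ consists of the indices $i$ with $d_i$ odd; meanwhile $\Xi\cap(d_3-2,\ldots,d_{\ell-1}-2,2j-2)$ consists of the indices $i\in\{3,\ldots,\ell-1\}$ with $d_i$ odd. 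The difference $|\Xi\cap(d_1,\ldots,d_\ell)|-|\Xi\cap(d_3-2,\ldots,d_{\ell-1}-2,2j-2)|$ therefore counts odd values among $d_1,d_2,d_\ell$ — which, since $d_2,d_\ell$ are even and $d_1=1$, is exactly $1$ (the $d_1=1$ contributes, and the final entry $2j-2$ is even). So $q^{|\Xi\cap(d_1,\ldots,d_\ell)|}=q\cdot q^{|\Xi\cap(d_3-2,\ldots,d_{\ell-1}-2,2j-2)|}$, which turns $q^{\ell-2}$ into $q^{\ell-1}$ after absorbing the $q$; then the inner hypothesis applies because $r-2=\partial(d_1,\ldots,d_\ell)-2\geq\partial(d_3-2,\ldots,d_{\ell-1}-2,\ldots)$ by Remark~\ref{rem:partial12} (or both sides vanish), and $r-2$ is even. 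I expect the arithmetic of the $\Xi$-exponent — correctly accounting for the shift in indices and the changed final entry $2j-2$ — to be the main point requiring care; everything else is a clean term-by-term matching of two recursions. One should also separately dispatch the degenerate subcases flagged in Proposition~\ref{Waltrec} (namely $d_3\leq 1$, forcing the $j$-sum empty, and $d_3=2$ forcing the sequence $(1,2,2,\ldots,2)$), checking the claimed identity directly there.
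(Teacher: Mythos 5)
Your proposal is correct and takes essentially the same route as the paper: well-founded induction on $(\mS_\ell^\sym,\leq_\sym)$, expanding both sides via Proposition~\ref{Waltrec} and Proposition~\ref{Wsymrec} (with $a_\ell=0$) and matching terms, the crux being $|\Xi\cap(d_1,\ldots,d_\ell)|=1+|\Xi\cap(d_3-2,\ldots,d_{\ell-1}-2,2j-2)|$ because $d_1=1$ is the only odd entry among $d_1,d_2,d_\ell,2j-2$ (you are in fact more explicit than the paper about the degenerate case $\partial(d_1,\ldots,d_{\ell-1})<r$). Two small cautions on your strengthened inductive statement: the interim claim that the identity holds for all even $r$ with $2\leq r\leq\partial$ is false for $r<\partial$ (on $(1,2,3,2,1)$ with $r=2$ one gets $q^{3}(q^3-1)\neq q^{2}(q^3-1)$), and dropping ``$d_\ell$ even'' would smuggle in the harder case settled only by Lemma~\ref{lem:dellodd}; neither harms your argument, since every instance your induction actually invokes has $r\geq\partial$ and an even last entry ($2j-2$, or a trailing $0$ appended to $(d_1,\ldots,d_{\ell-1})$), so the correct strengthening is ``$r\geq\partial$ and last entry even.''
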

\begin{proof}
    We use the same notation as in the proof of Proposition~\ref{Waltrec}. One can check that $W_2(\matsym[1,2])=q(q-1)=qW_2(\matalt[1,2])$. We use the well-founded induction principle (and Remark~\ref{rem:partial12}),
    assuming that the statement is true for all symmetric sequences that precede $(d_1,\ldots,d_\ell)$ in the poset $(\mS_\ell^\sym,\leq_\sym)$. Since $d_\ell$ is even, $|\Xi\cap(d_1,\ldots,d_\ell)|= |\Xi\cap(d_1,\ldots,d_{\ell-1})|$. Moreover, since $r=\partial(\mF)$ is even and $d_1=1$, by Propositions~\ref{Waltrec} and~\ref{Wsymrec} we have 
    \begin{align*}        W_r(\matsym&[d_1,\ldots,d_{\ell}])\\ &=W_r(\matsym[d_1,\ldots,d_{\ell-1}])\\&\qquad +(q-1)q^{\ell-1}\sum_{j=1}^{\left\lfloor\frac{d_\ell}{2}\right\rfloor}W_{r-2}(\matsym[d_3-2,\ldots,d_{\ell-1}-2,2j-2])\\
        &=q^{|\Xi\cap(d_1,\ldots,d_{\ell-1})|}W_r(\matalt[d_1,\ldots,d_{\ell-1}])\\&\qquad +(q-1)q^{\ell-1}q^{|\Xi\cap(d_3,\ldots,d_{\ell-1})|}\sum_{j=1}^{\left\lfloor\frac{d_\ell}{2}\right\rfloor}W_{r-2}(\matalt[d_3-2,\ldots,d_{\ell-1}-2,2j-2])\\
        &=q^{|\Xi\cap(d_1,\ldots,d_{\ell-1})|}W_r(\matalt[d_1,\ldots,d_{\ell-1}])\\&\qquad +(q-1)q^{\ell-2}q^{|\Xi\cap(d_1,\ldots,d_{\ell-1})|}\sum_{j=1}^{\left\lfloor\frac{d_\ell}{2}\right\rfloor}W_{r-2}(\matalt[d_3-2,\ldots,d_{\ell-1}-2,2j-2])\\
        &=q^{|\Xi\cap(d_1,\ldots,d_{\ell-1})|}W_r(\matalt[d_1,\ldots,d_{\ell}])\\
        &=q^{|\Xi\cap(d_1,\ldots,d_{\ell})|}W_r(\matalt[d_1,\ldots,d_{\ell}]).
    \end{align*}
    This concludes the proof.
\end{proof}

The following result generalizes~\cite[Proposition~3.6]{lewis2010matrices}, which can be 
recovered for the symmetric Ferrers sequence $(1,2,\ldots,n-1,n,n-1,\ldots,2,1)$, $n\in\N$.

\begin{lemma}\label{lem:dellodd}
   Let $r\geq 2$ and let $(d_1,\ldots,d_\ell)\in\mS^\sym_\ell$ be a symmetric Ferrers sequence. If $r=\partial(d_1,\ldots,d_\ell)$ is even and $d_\ell$ is odd, then
   \begin{equation*}
       W_{r}(\matalt[d_1,\ldots,d_\ell])= W_{r-1}(\matsym[d_2-1,\ldots,d_\ell-1]).
   \end{equation*}
\end{lemma}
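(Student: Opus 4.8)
### Plan

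The plan is to prove Lemma~\ref{lem:dellodd} by well-founded induction on the poset $(\mS_\ell^\sym,\leq_\sym)$, mirroring the strategy of Lemma~\ref{lem:altsymeven} but now exploiting the recursions in the ``odd last diagonal'' regime. First I would dispose of the base case: for the minimal relevant sequence---which, as in the proof of Proposition~\ref{Waltrec}, is forced to be $(1)$ or of the shape $(1,2,2,\ldots,2,d_\ell)$ with $d_\ell$ odd and $r=\partial=2$---one checks directly that $W_2(\matalt[1,2,\ldots])$ equals $W_1(\matsym[1,\ldots,d_\ell-1])$ by an explicit count (e.g.\ $W_2(\matalt[1,2])=q-1=W_1(\matsym[1])$). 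Then I would set up the inductive step assuming the statement holds for all symmetric Ferrers sequences strictly preceding $(d_1,\ldots,d_\ell)$.

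For the inductive step, since $d_\ell$ is odd we have $a_\ell=1$, so Proposition~\ref{Waltrec} gives
\begin{align*}
    W_r(\matalt[d_1,\ldots,d_\ell]) = W_r(\matalt[d_1,\ldots,d_{\ell-1}]) + (q-1)q^{\ell-2}\sum_{j=1}^{\lfloor d_\ell/2\rfloor}W_{r-2}(\matalt[d_3-2,\ldots,d_{\ell-1}-2,2j-2]).
\end{align*}
On the right-hand target, I would apply Remark~\ref{rem:Wsymevenodd} to $W_{r-1}(\matsym[d_2-1,\ldots,d_\ell-1])$: since $d_\ell-1$ is even, the $a_\ell$-term vanishes and
\begin{align*}
    W_{r-1}(\matsym[d_2-1,\ldots,d_\ell-1]) = W_{r-1}(\matsym[d_2-1,\ldots,d_{\ell-1}-1]) + (q-1)q^{\ell-2}\sum_{j=1}^{\lfloor d_\ell/2\rfloor}W_{r-3}(\matsym[(d_3-1)-2,\ldots,(d_{\ell-1}-1)-2,2j-2]).
\end{align*}
Now I match the two expansions term by term. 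For the leading terms, I need $W_r(\matalt[d_1,\ldots,d_{\ell-1}]) = W_{r-1}(\matsym[d_2-1,\ldots,d_{\ell-1}-1])$; this follows from the inductive hypothesis applied to the shorter sequence $(d_1,\ldots,d_{\ell-1})$, after checking that the relevant degree condition still holds (by Remark~\ref{rem:partial12} and the fact that $d_\ell$ odd with $r=\partial$ forces $\partial(d_1,\dots,d_{\ell-1})$ to still be $r$ in the cases that matter---this compatibility check is something to be careful about). For the sum terms, I need, for each $j$, the identity $W_{r-2}(\matalt[d_3-2,\ldots,d_{\ell-1}-2,2j-2]) = W_{r-3}(\matsym[d_3-3,\ldots,d_{\ell-1}-3,2j-2])$; noting that $(d_3-1)-2 = d_3-3$ etc., this is again the inductive hypothesis applied to the (strictly smaller) sequence $(1,2,d_3-2,\ldots,d_{\ell-1}-2,2j-2)$ or an appropriately padded version, using $\partial(d_3-2,\ldots)\le r-2$ and the $0$-padding convention $R_{t,s}(0,\ldots)=R_{t,s}(\ldots)$.

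The main obstacle I anticipate is purely bookkeeping rather than conceptual: keeping the index shifts consistent (the alternating recursion ``drops'' two diagonals and shifts by $-2$, while passing from $\matsym[\ldots-1]$ back via Remark~\ref{rem:Wsymevenodd} introduces another layer of $-1$ and then $-2$ shifts), and verifying in each sub-case that the degree hypothesis ``$r=\partial$ and $d_\ell$ odd'' transfers correctly to the smaller sequences so that the inductive hypothesis genuinely applies---in particular handling the degenerate cases where some $d_i-2$ or $d_i-1$ hits $0$ or $1$ and a sum collapses, exactly as in Propositions~\ref{Waltrec} and~\ref{Wsymrec}. Once the two recursions are aligned and the degree conditions checked, the equality is immediate from the inductive hypothesis, and I would close by invoking well-founded induction on $(\mS_\ell^\sym,\leq_\sym)$.
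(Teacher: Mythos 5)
Your overall scaffolding (well-founded induction on $(\mS_\ell^\sym,\leq_\sym)$, expand both sides via Propositions~\ref{Waltrec} and~\ref{Wsymrec}, match terms) is the right starting point, but the term-by-term matching you propose is not justified by the inductive hypothesis, and this is where the real content of the proof lives. For the leading terms you need $W_r(\matalt[d_1,\ldots,d_{\ell-1}]) = W_{r-1}(\matsym[d_2-1,\ldots,d_{\ell-1}-1])$; since $d_\ell$ is odd the sequence is such that $\ell$ is odd and $d_{\ell-1}$ is \emph{even}, so the truncated sequence $(d_1,\ldots,d_{\ell-1})$ violates the hypothesis ``last diagonal odd'' of the very lemma you are inducting on. The compatibility issue is therefore not the degree bookkeeping you flag, but a parity obstruction: this leading-term identity is essentially the $d_\ell=1$ case of the lemma, and in the paper it is the crux (Case~1), proved by a further application of Proposition~\ref{Wsymrec} to $(d_2-1,\ldots,d_{\ell-1}-1)$ (whose last entry $d_{\ell-1}-1$ is odd, so the $a_\ell$-term reappears), followed by Lemma~\ref{lem:altsymeven} to convert the resulting symmetric counts into alternating ones, and a reassembly via Proposition~\ref{Waltrec}. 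Your proposal never invokes Lemma~\ref{lem:altsymeven} at all, and without it the induction does not close.

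The matching of the summed terms has the same defect plus an index slip. After correcting $(d_3-1)-2$ to $(d_4-1)-2=d_4-3$ (the symmetric recursion drops the first \emph{two} entries of $(d_2-1,\ldots,d_\ell-1)$, so the inner sequence starts at $d_4-3$, as in the paper's Equation~\eqref{eq2:dellodd}), you would need
\begin{equation*}
W_{r-2}(\matalt[d_3-2,\ldots,d_{\ell-1}-2,2j-2]) = W_{r-3}(\matsym[d_4-3,\ldots,d_{\ell-1}-3,2j-2]).
\end{equation*}
This is not an instance of the lemma for the smaller sequence $(d_3-2,\ldots,d_{\ell-1}-2,2j-2)$: its last entry $2j-2$ is even (again violating the hypothesis), and even formally the lemma would produce $2j-3$, not $2j-2$, in the last slot on the symmetric side. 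The paper circumvents both problems by splitting into the cases $d_\ell=1$ and $d_\ell\geq 3$, doing the hard work in Case~1 with Lemma~\ref{lem:altsymeven}, and reducing Case~2 to Case~1 by showing both sides satisfy the same recurrence in $d_\ell$ with the same initial conditions. To repair your argument you would need to replace both of your ``inductive hypothesis'' invocations with arguments of this kind.
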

\begin{proof}
Since the Ferrers sequence is symmetric, $d_\ell$ odd implies that $|\Xi\cap(d_1,\ldots,d_\ell)\cap\Delta_\ell|=1$. Therefore, since alternating matrices have zeros on the main diagonal, the number of rank $r$ alternating matrices supported on $(d_1,\ldots,d_\ell)$ is the same as the number of rank $r$ alternating matrices supported on $(d_1,\ldots,d_\ell-1)$. Moreover, 
$r-1=\partial(d_2-1,\ldots,d_\ell-1)$ by Proposition~\ref{prop:dFdiag}. It easy to check that $W_{2}(\matalt[1])=0=W_{1}(\matsym[0])$ and  $W_{2}(\matalt[1,2,1])=q-1=W_{1}(\matsym[1])$. These are all the possible cases for $r=2$ and thus we can assume $r\geq 4$ in the remainder of the proof. In order for the alternating matrices to have rank at least $4$, it holds that $d_4=4$. We use the well-founded induction principle (and Remark~\ref{rem:partial12}), assuming that the statement is true for all symmetric sequences that precede $(d_1,\ldots,d_\ell)$ in the poset $(\mS_\ell^\sym,\leq_\sym)$. We distinguish two cases depending to the value of $d_\ell$.

\noindent\underline{Case 1}: Assume that $d_\ell=1$. By Proposition~\ref{Wsymrec} and the fact that $d_{\ell-1}-1\geq 1$ is odd, we obtain
\begin{align} \label{eq:recsym}
    &W_{r-1}(\matsym[d_2-1,\ldots,d_{\ell-1}-1,0]) \nonumber \\&=W_{r-1}(\matsym[d_2-1,\ldots,d_{\ell-1}-1]) \nonumber
     \\&=W_{r-1}(\matsym[d_2-1,\ldots,d_{\ell-2}-1]) 
    \\&\qquad +(q-1)q^{(\ell-3)/2}W_{r-2}(\matsym[d_3-2,\ldots,d_{\ell-1}-2]) \nonumber
    \\&\qquad +(q-1)q^{\ell-3} \sum_{j=1}^{(d_{\ell-1}-2)/2} W_{r-3}(\matsym[d_4-3,\ldots,d_{\ell-1}-3,2j-2]). \nonumber
\end{align}
Now, applying Lemma~\ref{lem:altsymeven} to Equation~\eqref{eq:recsym} and assuming the statement is true for all symmetric Ferrers sequences preceding $(d_1,\ldots,d_{\ell-1},1)$ in $(\mS_\ell^\sym,\leq_\sym)$, we have
\allowdisplaybreaks
\begin{align*}
    W_{r-1}(\matsym&[d_2-1,\ldots,d_{\ell-1}-1,0])
    \\&=W_{r}(\matalt[1,d_2,\ldots,d_{\ell-2}])
    \\&\qquad +(q-1)q^{(\ell-3)/2}q^{(\ell-3)/2}W_{r-2}(\matalt[d_3-2,\ldots,d_{\ell-1}-2])
     \\&\qquad +(q-1)q^{\ell-3} \sum_{j=1}^{(d_{\ell-1}-2)/2} W_{r-2}(\matalt[1,d_4-2,\ldots,d_{\ell-1}-2,2j-1])
     \\&=W_{r}(\matalt[1,d_2,\ldots,d_{\ell-2}])
    \\&\qquad +(q-1)q^{(\ell-3)/2}q^{(\ell-3)/2}W_{r-2}(\matalt[1,d_4-2,\ldots,d_{\ell-1}-2])
     \\&\qquad +(q-1)q^{\ell-3} \sum_{j=1}^{(d_{\ell-1}-2)/2} W_{r-2}(\matalt[1,d_4-2,\ldots,d_{\ell-1}-2,2j-1])
     \\&=W_{r}(\matalt[1,d_2,\ldots,d_{\ell-2}])
     \\&\qquad +(q-1)q^{\ell-3} \sum_{j=0}^{(d_{\ell-1}-2)/2} W_{r-2}(\matalt[1,d_4-2,\ldots,d_{\ell-1}-2,2j-1])
     \\&=W_{r}(\matalt[1,d_2,\ldots,d_{\ell-2}])
     \\&\qquad +(q-1)q^{\ell-3} \sum_{j=1}^{d_{\ell-1}/2} W_{r-2}(\matalt[1,d_4-2,\ldots,d_{\ell-1}-2,2j-2])
     \\&=W_{r}(\matalt[d_1,\ldots,d_\ell-1])
     \\&=W_{r}(\matalt[d_1,\ldots,d_\ell-1,1]),
\end{align*}
where the second-to-last equality follows from Proposition~\ref{Waltrec}.

\noindent\underline{Case 2:} Assume that $d_\ell\geq 3$. Proposition~\ref{Waltrec} gives
    \begin{equation}\label{eq1:dellodd}
        \begin{aligned}
             W_{r}(\matalt[d_1,\ldots,d_\ell])&= W_{r}(\matalt[d_1,\ldots,d_\ell-1])\\
             &= W_{r}(\matalt[d_1,\ldots,d_{\ell-1}]) \\&\qquad +
             (q-1)q^{\ell-2}\sum_{j=1}^{(d_\ell-1)/2} W_{r-2}(\matalt[d_3-2,\ldots,d_{\ell-1}-2,2j-2]).
        \end{aligned}
    \end{equation}
    On the other hand, Theorem~\ref{Wsymrec} and the fact that $d_\ell-1$ is even imply
    \begin{multline}\label{eq2:dellodd}
    W_{r-1}(\matsym[d_2-1,\ldots,d_\ell-1]) =W_{r-1}(\matsym[d_2-1,\ldots,d_{\ell-1}-1])+\\
             (q-1)q^{(\ell-1)-1} \sum_{j=1}^{(d_\ell-1)/2} W_{r-3}(\matsym[d_4-3,\ldots,d_{\ell-1}-3,2j-2]).
    \end{multline}
    One checks that $W_{r}(\matalt[d_1,\ldots,d_\ell])$ and $  W_{r-1}(\matsym[d_2-1,\ldots,d_\ell-1])$ satisfy the same recurrence relation, given in Equations~\eqref{eq1:dellodd} and~\eqref{eq2:dellodd}. Since $W_2(\matalt[1,2,3])=W_2(\matalt[1,2])=q=W_2(\matsym[1])=W_1(\matsym[1,2])$, the two relations have the same initial conditions. This proves the statement of the lemma.
\end{proof}

We are finally ready to prove Theorem~\ref{thm:altsym}.

\begin{proof}[Proof of Theorem~\ref{thm:altsym}]
        Recall  that $|\Xi\cap(d_1,\ldots,d_\ell)|=|\{1\leq i\leq\ell \, :\, d_i \textup{ odd}\}|$; see~\eqref{XiF}. The result is trivial for $r=0$ or $(d_1,\ldots,d_\ell)=(0,\ldots,0)$. Indeed, we have 
        \begin{enumerate}
            \item $W_0(\matsym[0])=1=W_0(\matalt[0])$,
            \item $W_r(\matsym[0])=0=W_r(\matalt[0])$ for all $r\geq 1$,
            \item $W_0(\matsym[d_1,\ldots,d_\ell])=1=W_0(\matalt[d_1,\ldots,d_\ell])$ for all $(d_1,\ldots,d_\ell)\in\N^\ell$.
            \end{enumerate}
        We assume $r\geq 2$ and $(d_1,\ldots,d_\ell)\neq(0,\ldots,0)$. Using Propositions~\ref{Waltrec} and~\ref{Wsymrec} and an induction argument, it is immediate to check that if the result holds for the symmetric Ferrers sequence $(d_1,\ldots,d_\ell)$ then it holds for the symmetric Ferrers sequence $(d_1,\ldots,d_\ell,0)$. Therefore it suffices to prove the result for $d_\ell\neq 0$.  The case where $d_\ell$ is even is given by Lemma~\ref{lem:altsymeven}. For $d_\ell$ odd 
        we have $\ell$ odd and so $d_{\ell-1}$ even. By Propositions~\ref{Waltrec} and~\ref{Wsymrec}, Remark~\ref{rem:Wsymevenodd}, Lemma~\ref{lem:dellodd}, and the first part of the proof, we have 
        \allowdisplaybreaks
    \begin{align*}
W_r(\matsym[d_1,\ldots,d_{\ell}])&=W_r(\matsym[d_1,\ldots,d_{\ell-1}])\\&\qquad +(q-1)q^{\ell-1}\sum_{j=1}^{\left\lfloor\frac{d_\ell}{2}\right\rfloor}W_{r-2}(\matsym[d_3-2,\ldots,d_{\ell-1}-2,2j-2])\\&\qquad +(q-1)q^{(\ell-1)/2} W_{r-1}(\matsym[d_2-1,\ldots,d_\ell-1])\\
&=W_r(\matsym[d_1,\ldots,d_{\ell}-1])\\&\qquad +(q-1)q^{(\ell-1)/2} W_{r-1}(\matsym[d_2-1,\ldots,d_\ell-1])\\
        &=q^{|\Xi\cap(d_1,\ldots,d_{\ell-1})|}W_r(\matalt[d_1,\ldots,d_{\ell}-1])\\&\qquad +(q-1)q^{(\ell-1)/2} W_{r-1}(\matsym[d_2-1,\ldots,d_\ell-1])\\
        &=q^{(\ell-1)/2}W_r(\matalt[d_1,\ldots,d_{\ell}])\\&\qquad +(q-1)q^{(\ell-1)/2} W_r(\matalt[d_1,\ldots,d_{\ell}])\\
        &=q^{(\ell+1)/2}W_r(\matalt[d_1,\ldots,d_{\ell}])\\
        &=q^{|\Xi\cap(d_1,\ldots,d_\ell)|}W_r(\matalt[d_1,\ldots,d_{\ell}]).
    \end{align*}
    Note that we used that $|\Xi\cap(d_1,\ldots,d_{\ell-1})|=(\ell-1)/2$ and $|\Xi\cap(d_1,\ldots,d_{\ell})|=(\ell+1)/2$, which is a consequence of the fact that $d_\ell$ is odd and $d_{\ell-1}$ is even; see~\eqref{XiF}. This concludes the proof.
\end{proof}

\bigskip

\bibliographystyle{amsplain}
\bibliography{ourbib}

\end{document}